\newcommand{\Cn}{$C^{(n)}$\,}
\newcommand{\Ult}{\mathrm{Ult}}
\newcommand{\Con}{\mathrm{Con}}
\newcommand{\VP}{$\mathrm{VP}\,$}
\newcommand{\Co}{$C^{(<\omega)}$\,}
\newcommand{\len}{\text{len}\,}
\newcommand{\ran}{\text{range}\,}
\newcommand{\supp}{\text{supp}\,}
\newcommand{\pikappalambda}{\mathcal{P}_\kappa(\lambda)}
\newcommand{\ZFC}{\mathrm{ZFC}\,}
\newcommand{\GCH}{\mathrm{GCH}\,}
\newcommand{\crit}{\mathrm{crit}}
\newcommand{\cf}{\mathrm{cf}}
\newcommand{\otp}{\mathrm{otp}}
\newcommand{\dom}{\text{dom}\,}
\newcommand{\Add}{\mathrm{Add}}
\newtheorem{theo}{Theorem}[section]
\newtheorem{quest}{Question}
\newtheorem{defi}[theo]{Definition}
\newtheorem{axiom}{Axiom}
\newtheorem{lemma}[theo]{Lemma}
\newtheorem{prop}[theo]{Proposition}
\newtheorem{remark}[theo]{Remark}
\newtheorem{claim}[theo]{Claim}
\newtheorem{cor}[theo]{Corollary}
\begin{document}
\title[Identity Crisis between supercompact and V\v{o}penka's]{Identity Crisis between supercompactness and V\v{o}penka's Principle}
\author{Yair Hayut}
\address{School of Mathematical Sciences. Tel Aviv University. Tel Aviv 69978, Israel}
\email{yair.hayut@mail.huji.ac.il}
\author{Menachem Magidor}
\address{Institute of Mathematics, The Hebrew University of Jerusalem, Jerusalem 91904, Israel}
\email{mensara@savion.huji.ac.il}
\author{Alejandro Poveda}
\address{Departament de Matemàtiques i Informàtica, Universitat de Barcelona. Gran Via de les Corst Catalanes, 585, Barcelona 08007, Catalonia.}
\email{alejandro.poveda@ub.edu}
\thanks{The third author's research has been supported by MECD (Spanish Government) Grant no FPU15/00026, MEC project number MTM2017-86777-P and SGR (Catalan Government) project number 2017SGR-270.}

\subjclass[2000]{Primary 03Exx; Secondary 03E50,03E57}

\begin{abstract}
In this paper we study the notion of $C^{(n)}$-supercompactness introduced by Bagaria in \cite{Bag} and prove the identity crises phenomenon for such class. Specifically, we show that consistently the least supercompact is strictly below the least $C^{(1)}$-supercompact  but also that the least supercompact is $C^{(1)}$-supercompact (and even $C^{(n)}$-supercompact). Furthermore, we prove that under suitable hypothesis that the ultimate identity crises is also possible. These results solve several questions posed by Bagaria and Tsaprounis.
\end{abstract}

\maketitle

\section{Introduction}
Reflection principles are one of the most important and ubiquitous phenomena in mathematics. Broadly speaking one can  formulate reflection principles by means of the slogan \textit{``If a structure enjoys some property, there is a smaller substructure satisfying the same property''}. In practice the term \textit{smaller substructure} use to be modulated by some given regular cardinal.

The dual version of reflection principles are the so called the compactness principles. The way of defining any compactness principle is by means of the slogan \textit{``If every small substructure of a given structure enjoys some property, then the structure also satisfies the property''}. One can easily translate any reflection principle to a compactness one and conversely, hence the choice for the formulation of a given problem will depend exclusively  on which of them is more illustrative. Mathematical Logic, and specially Set Theory, is one of those fields where most of the central questions admit a suitable formulation in terms of reflection principles and thus its study becomes of special interest. Among many other examples, we can highlight the investigations on stationary reflection or the study of the tree property at regular cardinals.

From a platonistic perspective, Set Theory is essentially the field devoted to reveal the truths of the universe of sets. Long time ago L\'evy and Montague proved the Reflection theorem (see e.g. \cite{Kun}) discovering that reflection is an essential feature of the model-theoretic architecture of $V$. More precisely, for each metatheoretic $n\in\omega$, they proved that the class of ordinals $\alpha\in C^{(n)}$ such that $V_\alpha\prec_n V$ is a proper club class. Little time after, L\'evy noticed that the Reflection theorem is equivalent to the axioms of Infinity and Replacement modulo the remaining $\mathrm{ZF}$ axioms; accentuating, even more, the belief that reflection is one of the cornerstones of Set Theory.

\begin{comment}
\begin{theo}[Reflection theorem]
For each (meta-theoretic) $n\in\omega$, there is a closed and unbounded class $C^{(n)}$ of ordinals $\alpha$ such that for every $a_0,\dots, a_n\in V_\alpha$ and for every $\Sigma_n$-formula $\varphi$,
$$\vDash_n \varphi(a_0,\dots, a_n)\;\longleftrightarrow\; \langle V_\alpha,\in\rangle\vDash\varphi(a_0,\dots, a_n).\footnote{Here $\vDash_n$ stands for the truth relation for $\Sigma_n$-formulae.} $$
\end{theo}
\end{comment}

%Morally the Reflection theorem asserts that the universe is \textit{too large} to be definable by some property $\varphi$, since there are proper class many strata of $V$ witnessing $\varphi$. It is worth to point out that L\'evy also showed that the Reflection theorem is equivalent to  Thus the reflection phenomenon is an intrinsic model-theoretic characteristic of the universe of Set Theory.
One of the ways reflection principles have became more and more sophisticated by means of the machinery of elementary embedding. Many of the well-known large cardinals notions are formulated as critical points of elementary embeddings $j:V\rightarrow M$ between the universe and some transitive substructure $M\subseteq V$. Morally the family of large cardinals correspond to a hierarchy of principles asserting that there are strong forms of agreement between the whole universe $V$ and certain substructures of it. The degree of agreement between the two reals depends on the specific properties of $j$.

The purpose of the present paper is to contribute to the investigation of the identity crises phenomenon in the section of the large cardinal hierarchy ranging between the first supercompact cardinal and Vopenka's Principle (\VP$\,$on the sequel). These cardinals are known as \Cn-cardinals and were introduced by Bagaria in \cite{Bag} aiming  for a sharp study of the strongest forms of reflection. Morally these families of large cardinal principles stablish the canonical way to climb upwards in the ladder towards the \textit{ultimate reflection principle}. For convenience throughout the paper we shall denote by $\mathfrak{M},\mathfrak{K},\mathfrak{S}$, $\mathfrak{S}_{\omega_1}$\footnote{A cardinal $\kappa$ is called $\omega_1$-strongly compact if for every set $X$ and every $\kappa$-complete filter over $X$, there is some $\omega_1$-complete ultrafilter extending it. For a extensive study of such cardinals see \cite{BM} and \cite{BM2}. } and $\mathfrak{E}$ the classes of measurable, strongly compact, $\omega_1$-strongly compact, supercompact and extendible cardinals, respectively and by $\mathfrak{S}^{(n)}$ and $\mathfrak{E}^{(n)}$ the families of \Cn-supercompact and \Cn-extendible cardinals, respectively. Any non defined notion may be consulted in the excellent PhD dissertation of Tsaprounis \cite{TsaPhD}.

Several studies on the topic of \Cn-cardinals have been carried out succesfully by Bagaria and Tsaprounis whom investigations covers a broad spectrum embracing from the interplay of \Cn-cardinals with forcing to applications to Category theory and {Resurrection \-Axioms} (see \cite{Bag}\cite{BagEtAl} \cite{TsaChain}\cite{Tsan}\cite{Tsa}\cite{TsaResu}. Nonetheless,  there is a natural notion within the setting of the \Cn-cardinals which remains elusive and mysterious: \Cn-supercompactness.
\begin{defi}[\Cn-supercompactness \cite{Bag}]
A cardinal $\kappa$ is $\lambda$-\Cn-supercompact for some $\lambda>\kappa$, if there is an elementary embedding $j: V\rightarrow M$ such that $\crit(j)=\kappa$, $j(\kappa)>\lambda$, $M^\lambda\subseteq M$ and $j(\kappa)\in C^{(n)}$. A cardinal $\kappa$ is \Cn-supercompact if it is $\lambda$-\Cn-supercompact, for each $\lambda>\kappa$.
\end{defi}
Our purpose along the paper will be basically to answer the next three questions posed by Bagaria and Tsaprounis. 
\begin{quest}\label{Question1}
Are the notions of supercompactness and $C^{(1)}$-supercompactness equivalent? More generally, given $n\geq 1$, is it true that the first supercompact is the same as the first \Cn-supercompact?
\end{quest}
\begin{quest}\label{Question3}
Do the classes of \Cn-supercompact cardinals form a strictly increasing \newline hierarchy?
\end{quest}
\begin{quest}\label{Question2} 
Let $n\geq 1$. Is it the first \Cn-supercompact cardinal the first \Cn-extendible?
\end{quest}

Our contribution to settle the aforementioned questions can be summarized by the following two results:
\begin{theo}[Main Theorem 1]\label{supercompnotc1}
Assume $\mathrm{GCH}$ holds and let $\kappa$ be a supercompact cardinal. Then there is a generic extension $V^\mathbb{P}$ where $\kappa$ is still supercompact but not $C^{(1)}$-supercompact. In fact there is no elementary embedding  in $V^\mathbb{P}$, $j:V^\mathbb{P}\rightarrow M$, such that $\crit(j)=\kappa$, $M^\omega\subseteq M$ and $j(\kappa)$ being a limit cardinal.
\end{theo}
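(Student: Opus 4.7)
The plan is to construct $\mathbb{P}$ as a two-stage iteration $\mathbb{L} \ast \dot{\mathbb{Q}}$. The first stage $\mathbb{L}$ will be Laver's preparation, making $\kappa$'s supercompactness indestructible under any subsequent $\kappa$-directed closed forcing (while preserving enough of GCH at and above $\kappa$). The second stage $\dot{\mathbb{Q}}$ will be a $\kappa$-directed closed poset, designed so that in $V^{\mathbb{P}}$ no embedding $j\colon V^{\mathbb{P}} \to M$ with $\crit(j) = \kappa$, $M^\omega \subseteq M$, and $j(\kappa)$ a limit cardinal can exist. Since $\dot{\mathbb{Q}}$ will be $\kappa$-directed closed, Laver indestructibility will yield at once that $\kappa$ remains supercompact in $V^{\mathbb{P}}$, so the preservation half of the theorem is essentially automatic.

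For $\dot{\mathbb{Q}}$ I would use an Easton-support iteration (or product) which, at each regular cardinal $\lambda > \kappa$ of uncountable cofinality, adds a combinatorial witness $W_\lambda$ sensitive to the limit-cardinal status of $\lambda$. A natural candidate for $W_\lambda$ is a non-reflecting stationary subset of $\lambda$ concentrated on $\mathrm{cof}(\omega)$, or more strongly a coherent sequence coding cardinal-successor information above $\kappa$. Because each stage will be $\kappa^+$-closed and the supports are chosen tamely, the overall iteration is $\kappa$-directed closed, as required.

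The non-existence of the bad $j$ I would argue by contradiction. Given such $j$ in $V^{\mathbb{P}}$, set $\lambda := j(\kappa)$. Since $M^\omega \subseteq M$, $M$ computes countable cofinalities correctly, hence $\cf^{V^{\mathbb{P}}}(\lambda) > \omega$ (otherwise a cofinal $\omega$-sequence through $\lambda$ would live in $M$, contradicting that $j(\kappa)$ is inaccessible in $M$ by elementarity). Applying $j$ to the generic sequence of witnesses added by $\dot{\mathbb{Q}}$, elementarity would force $\lambda$ itself to carry a witness of the same form; but $\kappa$'s supercompactness in $V^{\mathbb{P}}$, pushed through $j$ and combined with the $\omega$-closure of $M$, forces incompatible reflection behaviour at $\lambda$, yielding the desired contradiction.

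The hard part will be the precise design of $\dot{\mathbb{Q}}$ together with making this contradiction step go through. Since $M^\omega \subseteq M$ is a rather weak closure assumption, the combinatorial witness must depend only on countable-cofinality phenomena---where $M$ and $V^{\mathbb{P}}$ agree---for its behaviour under $j$ to be fully determined. Moreover, one must arrange $\dot{\mathbb{Q}}$ so as to target exactly those ordinals that can arise as $j(\kappa)$ for such embeddings, without either violating $\kappa$-directed closure or inadvertently destroying $\kappa$'s supercompactness through an overly aggressive choice of support.
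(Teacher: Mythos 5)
Your plan puts the obstruction in the wrong place, and this creates two problems I do not see a way around. First, a combinatorial witness at a cardinal $\lambda>\kappa$ that is strong enough to rule out embeddings with critical point $\kappa$ must in particular be incompatible with $\kappa$ being $\lambda$-supercompact --- but then your $\dot{\mathbb{Q}}$ destroys the supercompactness of $\kappa$ no matter how tamely the supports are chosen. Your concrete candidate already shows this: a non-reflecting stationary subset of $E^\lambda_\omega$ for $\lambda>\kappa$ contradicts even the $\lambda$-strong compactness of $\kappa$ (every stationary subset of $E^\lambda_\omega$ reflects when $\kappa$ is $\lambda$-supercompact), and accordingly the poset adding one is only strategically closed, not $\kappa$-directed closed, so Laver indestructibility does not apply; there is no escaping this dichotomy, since any witness that survives $\kappa$-directed closed forcing after Laver preparation is by that very fact compatible with supercompactness and hence useless as an obstruction. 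Second, even granting the witnesses, applying $j$ to a sequence $\langle W_\lambda:\lambda>\kappa\rangle$ produces by elementarity witnesses at the $M$-regular cardinals above $j(\kappa)$ --- not at $j(\kappa)$ itself, which is the only place a witness would do you any good (and $j(\kappa)$ is a limit cardinal, not one of the regular cardinals your iteration targets). You flag ``the precise design of $\dot{\mathbb{Q}}$'' and ``making the contradiction step go through'' as the hard part; unfortunately that is exactly where the content of the theorem lies, and the architecture you chose cannot be completed.

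The paper's proof inverts the geometry. One forces, by a Laver-guided Easton iteration \emph{below} $\kappa$, a $\square_\lambda$-sequence at stationarily many $\lambda<\kappa$ of cofinality $\omega$; this is harmless for the supercompactness of $\kappa$ (squares below $\kappa$ do not conflict with it, and the Laver function lets the usual lifting argument go through). Now suppose $j:V^{\mathbb{P}}\rightarrow M$ has $\crit(j)=\kappa$, $M^\omega\subseteq M$ and $j(\kappa)$ a limit cardinal. Then $\cf(j(\kappa))>\omega$, so $j(\dom(\ell)\cap E^\kappa_\omega)$ contains some $\lambda$ with $\kappa<\lambda<j(\kappa)$ and $\cf(\lambda)=\omega$, and $M\models\square_\lambda$. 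The decisive step, of which your outline has no analogue, is that $\mathrm{GCH}$ together with $M^\omega\subseteq M$ gives $(\lambda^+)^M=(\lambda^\omega)^M=\lambda^\omega=\lambda^+$, so the $M$-square sequence is a genuine $\square_\lambda$-sequence in $V^{\mathbb{P}}$; since $\cf(\lambda)=\omega<\kappa<\lambda$, this contradicts Solovay's theorem that $\square_{\lambda,\cf(\lambda)}$ fails above a supercompact cardinal. Note how the hypotheses you were unsure how to exploit --- $M^\omega\subseteq M$, $\mathrm{GCH}$, and the restriction to cofinality-$\omega$ points --- are each used exactly once and exactly here, to certify that $M$ computes $\lambda^+$ correctly and hence that the reflected square is real.
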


\begin{theo}[Main Theorem 2]\label{MainTheorem}
Let $n\geq 1$, $\kappa$ be a \Cn-supercompact cardinal and $\ell:\kappa\rightarrow \kappa$ be a $\mathfrak{S}^{(n)}$-fast function on $\kappa$. Then in the generic extension $V^\mathbb{M}$ given by a Magidor product of Prikry forcings $\kappa$ remains \Cn-supercompact and in fact it is the first ($\omega_1$-)strongly compact. In particular, the following holds in $V^\mathbb{M}$:
$$\min\mathfrak{M}<\min\mathfrak{K}_{\omega_1}=\min \mathfrak{K}=\min \mathfrak{S}=\min \mathfrak{S}^{(n)}<\min\mathfrak{E}.$$
\end{theo}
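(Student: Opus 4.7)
The plan is to adapt Magidor's classical identity-crisis construction to the $C^{(n)}$-setting, using the $\mathfrak{S}^{(n)}$-fast function $\ell$ as a predictor in the style of Hamkins' fast-function forcing. Concretely, I take $\mathbb{M}$ to be an Easton-supported product of Prikry forcings, one at each measurable cardinal $\mu<\kappa$ in a canonical set $A\subseteq \kappa$, with the Prikry measure at stage $\mu$ selected via the value $\ell(\mu)$. The purpose of each factor is to singularise a measurable cardinal below $\kappa$ to cofinality $\omega$, while the Magidor-product structure together with the Prikry property keep the cardinals above $\kappa$ intact.

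The first step is to kill strong compactness below $\kappa$. After forcing with $\mathbb{M}$, every $\mu\in A$ has cofinality $\omega$ and hence is no longer measurable, and a standard bookkeeping argument shows that no new measurable cardinals below $\kappa$ are created. Any $\omega_1$-strongly compact $\mu<\kappa$ in $V^{\mathbb{M}}$ would, after absorbing the forcing below $\mu$, have had to be measurable in $V$ and therefore to lie in $A$, which is a contradiction. This already yields $\min\mathfrak{K}_{\omega_1}\geq \kappa$ in $V^{\mathbb{M}}$.

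The main step is to preserve $C^{(n)}$-supercompactness of $\kappa$. Fix $\lambda>\kappa$ with $\lambda\in C^{(n)}$ and, using that $\ell$ is $\mathfrak{S}^{(n)}$-fast, choose a $\lambda$-$C^{(n)}$-supercompactness embedding $j:V\to M$ with $\crit(j)=\kappa$, $j(\kappa)\in C^{(n)}$, and $j(\ell)(\kappa)$ prescribed to be sufficiently large above $\lambda$. Then $j(\mathbb{M})$ factors as $\mathbb{M}*\dot{\mathbb{M}}_{\mathrm{tail}}$, where $\dot{\mathbb{M}}_{\mathrm{tail}}$ is a Magidor product in $M$ on the interval $(\kappa, j(\kappa))$. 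The fast-function control of $j(\ell)(\kappa)$ is what ensures that this tail is sufficiently strategically closed from the point of view of $V^{\mathbb{M}}$ to admit a master condition extending the pointwise image $j''G$ of the $\mathbb{M}$-generic. Lifting $j$ to $j^{*}:V^{\mathbb{M}}\to M^{j(\mathbb{M})}$ in the standard way then witnesses $\lambda$-$C^{(n)}$-supercompactness of $\kappa$ in $V^{\mathbb{M}}$; since $\lambda$ is arbitrary, $\kappa$ remains $C^{(n)}$-supercompact. Combining with the previous paragraph gives $\min\mathfrak{M}<\min\mathfrak{K}_{\omega_1}=\min\mathfrak{K}=\min\mathfrak{S}=\min\mathfrak{S}^{(n)}=\kappa$, while the strict inequality $\kappa<\min\mathfrak{E}$ follows from the general fact that every extendible cardinal is a limit of supercompacts below it, so the least supercompact $\kappa$ cannot itself be extendible.

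The main obstacle will be the master-condition construction together with the verification that $j(\kappa)\in C^{(n)}$ survives the lift. This is precisely where the $\mathfrak{S}^{(n)}$-refinement of the fast-function property is needed: it supplies simultaneously the largeness of $j(\ell)(\kappa)$ required for the closure of the tail forcing and the $C^{(n)}$-condition on $j(\kappa)$ required for $j^{*}$ to belong to $\mathfrak{S}^{(n)}$. Without this refinement, a plain supercompact fast function would preserve supercompactness but could in principle fail to preserve $C^{(n)}$-supercompactness, so the argument must be set up so that the choice of $j$, and in particular of $j(\ell)(\kappa)$, is always compatible with keeping the image of $\kappa$ inside $C^{(n)}$.
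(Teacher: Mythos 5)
Your choice of forcing and the overall factorization $j(\mathbb{M})\cong\mathbb{M}\times j(\mathbb{M})/\mathbb{M}$ match the paper, but the heart of your lifting argument does not work. The tail $j(\mathbb{M})/\mathbb{M}$ is a Magidor product of Prikry forcings on measurables in the interval $(\lambda,j(\kappa))$; Prikry forcing is not strategically closed (it adds cofinal $\omega$-sequences in each measurable it touches), so there is no ``sufficiently strategically closed'' tail against which to run a master-condition or diagonalization argument. Even if you pass to the direct-extension order $\leq^{\star}$, which is suitably closed, the standard way of producing an $M$-generic --- enumerating the dense open subsets of the tail in $V[G]$ in a sequence short enough to diagonalize --- is blocked here for the reason the paper makes explicit in Section 5: since $j(\kappa)\in C^{(n)}$, the ordinal $j(\kappa)$ is genuinely large in $V$ and not merely in $M$, so the tail has far too many dense sets as computed in $V$, while $M$ is only closed under $\lambda$-sequences. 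The paper's solution, which is the real content of the proof and is absent from your proposal, is to build the $M$-generic filter $H$ for the tail \emph{explicitly and definably inside $M$}: one forms an internal iterated ultrapower of length $\omega\cdot j(\kappa)$, hitting each relevant measure $\omega$ many times so that the critical sequences are Mathias-generic Prikry sequences, and then verifies genericity of the induced filter via a Mathias-type criterion for the Magidor product. Establishing that criterion requires proving the Mathias--Prikry property for $\mathbb{M}$ (via a finite diagonal intersection lemma and a R\"owbottom-style homogeneity lemma), and one must then separately check that $M^{\star}[G\times H]$ is still closed under $\lambda$-sequences, which is delicate because the iterated ultrapower itself destroys closure and the required sequences have to be reconstructed from $H$.

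A second gap: your argument that no $\omega_1$-strongly compact cardinal survives below $\kappa$ is not sound. The set $A=\ran(\ell)$ is only \emph{some} discrete set of measurables below $\kappa$, not all of them (indeed the conclusion $\min\mathfrak{M}<\min\mathfrak{K}_{\omega_1}$ requires measurables below $\kappa$ to survive), and an $\omega_1$-strongly compact cardinal need not itself be measurable, so ``it would have had to lie in $A$'' does not follow. The paper instead invokes the D\v{z}amonja--Shelah theorem: singularizing an inaccessible $\delta$ to cofinality $\omega$ while preserving $\delta^{+}$ forces $\square_{\delta,\omega}$, so $\mathbb{M}$ produces unboundedly many such square sequences below $\kappa$, and Solovay's theorem then excludes any ($\omega_1$-)strongly compact cardinal below $\kappa$.
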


Both theorem \ref{supercompnotc1} and theorem \ref{MainTheorem} settle in a negative way the former questions. Furthermore building on the ideas developed for their respective proofs we shall show how to prove the following strengthenings:
\begin{theo}\label{StrengthTheorem1}
Assume $\GCH$ holds and that there are two supercompact cardinals with a $C^{(1)}$-supercompact cardinal above them. Then there is a generic extension of the universe where the following holds:
$$\min\mathfrak{M}<\min\mathfrak{K}<\min\mathfrak{S}<\min\mathfrak{S}^{(1)}.$$
\end{theo}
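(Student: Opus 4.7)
The plan is to combine the forcing construction of Theorem~\ref{supercompnotc1} with a classical Magidor-style identity crisis that separates strong compactness from supercompactness. Let $\kappa_0<\kappa_1$ be the two supercompact cardinals of the hypothesis and $\kappa_2>\kappa_1$ the $C^{(1)}$-supercompact one. As a preparatory step, I would first apply a Laver-type iteration making the supercompactness of $\kappa_1$ indestructible under $\kappa_1$-directed closed forcing, together with a Tsaprounis-style preparation making the $C^{(1)}$-supercompactness of $\kappa_2$ robust under sufficiently closed subsequent forcings. Since these two preparations act on disjoint intervals of cardinals, they can be fused into a single reverse-Easton iteration below $\kappa_2$.

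Next, I would perform a Magidor-style iteration $\mathbb{Q}_0$ between $\kappa_0$ and $\kappa_1$ that destroys the supercompactness of $\kappa_0$ while preserving its strong compactness. A concrete choice is to add a non-reflecting stationary set of cofinality $\omega$ at $\kappa_0^+$, which violates the reflection consequences of supercompactness but is compatible with strong compactness; the latter is recovered by lifting a strong compactness embedding for $\kappa_0$ witnessed by the supercompactness of $\kappa_1$ above. By arranging $\mathbb{Q}_0$ to be $\kappa_1$-directed closed, the Laver preparation keeps $\kappa_1$ supercompact, and the preparation above keeps $\kappa_2$ $C^{(1)}$-supercompact. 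One must also ensure that at least one measurable below $\kappa_0$ survives, which comes essentially for free since the ground model has unboundedly many such cardinals below $\kappa_0$ and $\mathbb{Q}_0$ can be given a support that leaves an unbounded set of them untouched. The intermediate extension thus satisfies
$$\min\mathfrak{M}<\min\mathfrak{K}\le\kappa_0<\min\mathfrak{S}\le\kappa_1,$$
with $\kappa_2$ still $C^{(1)}$-supercompact.

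Finally, I would apply the forcing $\mathbb{P}$ from the proof of Theorem~\ref{supercompnotc1} at $\kappa_1$, using $\kappa_2$ in the role of the auxiliary large cardinal witnessing preservation. This destroys the $C^{(1)}$-supercompactness of $\kappa_1$ while retaining its supercompactness, and it must be checked to preserve both the strong compactness of $\kappa_0$ (via a standard lifting argument exploiting closure of $\mathbb{P}$ above $\kappa_0$) and the $C^{(1)}$-supercompactness of $\kappa_2$ (via the Tsaprounis preparation from the first step). Combining the two stages then yields a generic extension in which the desired chain $\min\mathfrak{M}<\min\mathfrak{K}<\min\mathfrak{S}<\min\mathfrak{S}^{(1)}$ holds.

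The hard part will be the coordination of the three separate preservation arguments: preserving supercompactness of $\kappa_1$ through $\mathbb{Q}_0$, preserving strong compactness of $\kappa_0$ through the forcing of Theorem~\ref{supercompnotc1}, and preserving $C^{(1)}$-supercompactness of $\kappa_2$ throughout. Each requires that the initial indestructibility preparations be configured so the later forcings factor through appropriately closed quotients. The subtlest point is the second one: lifting a strong compactness embedding for $\kappa_0$ through a forcing defined at $\kappa_1$ depends on fine control of the interaction between $\mathbb{P}$ and the non-reflecting stationary set added by $\mathbb{Q}_0$ at $\kappa_0^+$, and one expects this to be the principal technical content of the proof.
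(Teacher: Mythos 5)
There is a genuine gap, and it is concentrated in your final stage. The forcing of Theorem \ref{supercompnotc1} adds \emph{full} $\square_\theta$-sequences at stationarily many $\theta$ below $\kappa_1$, in particular at many $\theta$ in the interval $(\kappa_0,\kappa_1)$. By Solovay's theorem, if $\kappa_0$ is strongly compact then $\square_\theta$ fails for every $\theta\geq\kappa_0$; so a single instance of $\square_\theta$ with $\kappa_0\leq\theta<\kappa_1$ outright refutes the strong compactness of $\kappa_0$. No lifting argument or closure of $\mathbb{P}$ above $\kappa_0$ can rescue this: the obstruction is a combinatorial consequence of strong compactness, not a failure to find the right embedding. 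This is precisely why the paper does \emph{not} reuse the Theorem \ref{supercompnotc1} forcing verbatim: it replaces $\square_\theta$ by the weaker principle $\square_{\theta,\cf\theta}$, forced (via the poset $\mathbb{S}_\theta$ of Cummings--Foreman--Magidor) only at singular $\theta$ of cofinality exactly $\lambda=\kappa_0$ in $(\kappa_0,\kappa_1)$. That principle is compatible with $\kappa_0$ being strongly compact, yet the Solovay-style covering argument (Proposition \ref{NotC1Further}(2)) shows it still kills every strongly compact in $(\kappa_0,\kappa_1)$, and the argument of Theorem \ref{supercompnotc1} shows it still kills the $C^{(1)}$-supercompactness of $\kappa_1$. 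A second, independent error: you propose to recover the strong compactness of $\kappa_0$ from ``a strong compactness embedding for $\kappa_0$ witnessed by the supercompactness of $\kappa_1$ above.'' An embedding with critical point $\kappa_1$ fixes $\kappa_0$ and cannot cover $j''\lambda$ by a set of $M$-size below $j(\kappa_0)=\kappa_0$ for any $\lambda\geq\kappa_0$, so it witnesses no compactness property of $\kappa_0$ whatsoever.

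Structurally the paper also economizes where you run two separate destruction stages. It first applies Apter's indestructibility theorem below $\kappa_0$, making $\kappa_0$ the least strongly compact \emph{and} the least strong cardinal (hence not supercompact and not the least measurable), indestructible under $<\kappa_0$-directed closed, $\kappa_0$-strategically closed forcing. Then a single Easton iteration of the $\mathbb{S}_\theta$'s at cofinality-$\kappa_0$ singulars in $(\kappa_0,\kappa_1)$ is $<\kappa_0$-directed closed and $\kappa_0$-strategically closed, so the indestructibility handles $\kappa_0$ for free; the same iteration simultaneously clears the interval $(\kappa_0,\kappa_1)$ of strongly compacts, destroys the $C^{(1)}$-supercompactness of $\kappa_1$ while preserving its supercompactness (the Laver-guided lifting of Proposition \ref{PreservationOfSuper}), and is small relative to the $C^{(1)}$-supercompact $\kappa_2$. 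If you want to salvage your two-stage plan, you must at minimum (i) restrict the square-forcing to principles weak enough to coexist with a strongly compact $\kappa_0$ below, and (ii) replace the appeal to $\kappa_1$'s supercompactness by a genuine indestructibility property of $\kappa_0$ itself.
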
  
\begin{theo}[The ultimate identity crises]\label{MainTheorem2}
Let $\langle V,\in,\kappa\rangle$ be a model of (large enough fragment of) $\mathrm{ZFC}^\star$ plus $C^{(<\omega)}-\mathrm{EXT}$. Then in the generic extension $V^\mathbb{M}$ it is true that
$$\min\mathfrak{M}<\min\mathfrak{K}_{\omega_1}=\min\mathfrak{K}=\min\mathfrak{S}=\min \mathfrak{S}^{(<\omega)}<\min\mathfrak{E}. $$
\end{theo}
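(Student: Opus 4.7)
The plan is to combine the architecture of Theorem \ref{MainTheorem} with a uniform choice of witnessing embeddings across the entire $C^{(n)}$-hierarchy. I read the hypothesis $C^{(<\omega)}$-EXT as asserting the existence of a cardinal $\kappa$ which is $C^{(n)}$-extendible for every $n<\omega$. By Bagaria's analysis in \cite{Bag}, such a $\kappa$ is in particular $C^{(n)}$-supercompact for every $n<\omega$, so we have an abundant supply of embeddings on which to build.

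The first step is to upgrade the fast-function technology of Theorem \ref{MainTheorem} to produce an $\mathfrak{S}^{(<\omega)}$-fast function $\ell:\kappa\to\kappa$, i.e.\ a single Laver-type function that anticipates target objects relative to $C^{(n)}$-supercompactness embeddings for every $n$ simultaneously: for each $n<\omega$, each $\lambda>\kappa$, and each $x\in H_{\lambda^+}$, there should exist a $j:V\to M$ witnessing $\lambda$-$C^{(n)}$-supercompactness of $\kappa$ with $j(\ell)(\kappa)=x$. This is to be extracted by a diagonal construction exploiting $C^{(n)}$-extendibility of $\kappa$ for unboundedly many $n<\omega$. With $\ell$ in hand, I would run the Magidor product of Prikry forcings $\mathbb{M}$ guided by $\ell$ exactly as in the proof of Theorem \ref{MainTheorem}, so that every measurable cardinal of $V$ below $\kappa$ acquires a Prikry sequence and becomes singular of cofinality $\omega$ in $V^\mathbb{M}$. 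This already produces $\min\mathfrak{M}<\min\mathfrak{K}_{\omega_1}=\min\mathfrak{K}=\min\mathfrak{S}=\kappa$ in $V^\mathbb{M}$.

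For the preservation of $C^{(n)}$-supercompactness at $\kappa$, I would argue level by level: fixing $n<\omega$ and $\lambda>\kappa$, select a $\lambda$-$C^{(n)}$-supercompactness embedding $j:V\to M$ with $j(\ell)(\kappa)=\lambda$ provided by the fast function, and lift it through $\mathbb{M}$ to $j^{*}:V^\mathbb{M}\to M^{*}$ using a master condition in the quotient forcing $j(\mathbb{M})/\mathbb{M}$, precisely as in Theorem \ref{MainTheorem}. The verification that $j(\kappa)\in C^{(n)}$ in $V^\mathbb{M}$ should transfer verbatim. Since $\ell$ was built to work uniformly across all $n<\omega$, this yields $C^{(<\omega)}$-supercompactness of $\kappa$ in $V^\mathbb{M}$. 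Finally, $\kappa<\min\mathfrak{E}$ in $V^\mathbb{M}$ should follow by choosing $\kappa$ strictly below the hypothesized $C^{(<\omega)}$-extendible and observing, in the spirit of Theorem \ref{supercompnotc1}, that $\mathbb{M}$ cannot create new extendibles at $\kappa$ because it introduces a cofinal $\omega$-sequence in every measurable of $V$ below $\kappa$, destroying the target structures required for extendibility embeddings at $\kappa$.

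The main obstacle is the construction of the $\mathfrak{S}^{(<\omega)}$-fast function itself: the $\Sigma_n$-correctness demand on $j(\kappa)$ becomes genuinely stronger as $n$ grows, and coordinating the anticipation property simultaneously across all levels of the hierarchy by a single function $\ell$ is the delicate point, requiring a careful diagonalization through the $C^{(<\omega)}$-extendibility embeddings so that the resulting $\ell$ is simultaneously guessed by witnessing ultrapowers at every level. Once this is accomplished, the remaining preservation and upper-bound arguments should reduce to uniform extensions of those already carried out for Theorem \ref{MainTheorem}.
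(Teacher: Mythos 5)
Your overall architecture coincides with the paper's: obtain a single fast function that works at every level $n$ simultaneously, force with the Magidor product $\mathbb{M}$ guided by it, lift a $\lambda$-$C^{(n)}$-supercompactness embedding level by level exactly as in Theorem \ref{MainTheorem}, and use the D\v{z}amonja--Shelah $\square_{\delta,\omega}$-sequences to rule out strongly compact cardinals below $\kappa$. The problem is that you have isolated the one step that carries all the new content --- the existence of a single $\ell:\kappa\to\kappa$ that is $\mathfrak{S}^{(n)}$-fast for \emph{every} $n$ --- and left it as an acknowledged obstacle, to be handled by an unspecified ``diagonal construction''. That is a genuine gap, and not a routine one: for each fixed $n$, Tsaprounis's theorem supplies an $\mathfrak{E}^{(n)}$-fast (hence $\mathfrak{S}^{(n)}$-fast) function $\ell_n$, but the family $\langle \ell_n : n<\omega\rangle$ is indexed by a \emph{metatheoretic} $n$, and the property ``$\ell$ is $\mathfrak{S}^{(n)}$-fast'' involves $\Sigma_n$-correctness of $j(\kappa)$, which is not uniformly first-order in $n$; so one cannot simply quantify over $n$ inside $V$ and diagonalize or take a supremum. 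The paper's resolution is short but essential: under $C^{(<\omega)}$-$\mathrm{EXT}$ one has $V_\kappa\prec V$, so the $\ell_n$ can be defined uniformly inside $V_{\kappa+1}$ (fastness relativized to $V_\kappa$ agrees with fastness in $V$), whence $\ell=\sup_n\ell_n$ is a bona fide element of $V$ and is simultaneously fast at every level. Note also that you demand more of $\ell$ than is needed or available: the paper's fast functions only require $j(\ell)(\kappa)>\lambda$, not Laver-style anticipation of arbitrary $x\in H_{\lambda^+}$.

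Two smaller points. First, your argument that $\mathbb{M}$ ``destroys the target structures required for extendibility embeddings at $\kappa$'' is not the right mechanism for $\kappa<\min\mathfrak{E}$; the inequality follows because $\kappa=\min\mathfrak{S}$ in $V^{\mathbb{M}}$ while the first extendible always exceeds the first supercompact, combined with the observation (as in the corollary to Theorem \ref{MainTheorem}) that $C^{(3)}$-extendibility of $\kappa$ already yields extendible cardinals above $\kappa$ in $V$, and these survive the forcing $\mathbb{M}$, which has size at most $2^\kappa$. Second, the lifting in Theorem \ref{MainTheorem} is not a master-condition argument: since the generic for $j(\mathbb{M})/\mathbb{M}$ must be \emph{definable} over the target model (so that $j^\star$ lives in $V[G]$), the paper builds it explicitly via an $\omega\cdot j(\kappa)$-iteration of ultrapowers together with the Mathias--Prikry property; your preservation step should be routed through that construction rather than through a master condition.
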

The notions $C^{(<\omega)}-\mathrm{EXT}$ and $\mathfrak{S}^{(<\omega)}$ will be introduced at the end of section 3.

%\begin{quest}\label{Question2}
%Let $n\geq 1$. Is it true that $\min\mathfrak{S}<\min\mathfrak{S}^{(n)}$?
%\end{quest}
%On the other hand, Tsaprounis proved that \Cn-extendibility can be characterized in terms of \Cn-extendibility plus some form of superstrongness (see Corollary 2.36 in \cite{TsaPhD}). Hence any \Cn-extendible cardinal is in particular \Cn-supercompact. Nevertheless the author questions whether the superstrongness clause is really necessary to characterize \Cn-extendibility. This raises the following natural question:
%\begin{quest}\label{Question3}
%Let $n\geq 1$. Is it true that $\min\mathfrak{S}^{(n)}=\min\mathfrak{E}^{(n)}$?
%\end{quest}

The structure of the paper is as follows.
Section \ref{Section 2} will be devoted to the proofs of theorems \ref{supercompnotc1} and \ref{StrengthTheorem1} while section \ref{Section 3} will be focused on the proofs of theorems \ref{MainTheorem} and \ref{StrengthTheorem1}. We shall end the paper with section 4 and section 5 where we respectively describe what is known up to the moment about \Cn-supercompact cardinals and what are the possibles futures for the research of this topic. All the notions and notations are quite standard and can be easily found either in general manuals or in the bibliography quoted below.
\section{The first $C^{(1)}$-supercompact can be greater than the first supercompact. }\label{Section 2}
The present section is devoted to the proof of theorems \ref{supercompnotc1} and \ref{StrengthTheorem1}. In particular, both results answer negatively Question \ref{Question1}. Before beginning with the details let us give a taste of the ideas 
involved in the proof of these results.

A classical theorem of Solovay  asserts that if a cardinal $\kappa$ is strongly compact (hence supercompact) then $\square_\lambda$ fails, all $\lambda\geq\kappa$ \cite{Sol}. More generally if $\kappa$ is a supercompact cardinal then $\square_{\lambda,\cf(\lambda)}$ fails, for $\cf(\lambda)<\kappa<\lambda$ (see proposition \ref{NotC1Further}).
Therefore it is then natural to ask \textit{how much square} can hold below a supercompact cardinal. Working in this direction Apter proved in \cite{Apter} the consistency of a supercompact cardinal with the existence of $\square_\lambda$-sequences for each cardinal $\lambda$ in a certain stationary subset of $\kappa$. On this respect it is worth to emphasize that this result is close to be optimal since there is no club $C\subseteq\kappa$ where $\square_\lambda$ holds, for each $\lambda\in C$. Indeed, let us assume aiming for a contradiction that $\kappa$ is supercompact and $C\subseteq \kappa$ is a club whit the above property. Let $U$ be the standard normal measure derived by some elementary embedding with critical point $\kappa$ and $M$ be the correspoding ultrapower. By normality of the measure $C\in U$, hence $\square_\kappa$ holds in $M$, and furthermore it is not hard to show that $(\kappa^+)^M=\kappa^+$. Altogether one has that $\square_\kappa$ holds, yielding to a contradiction with the supercompactness of $\kappa$.

Broadly speaking, the main point to kill the $C^{(1)}$-supercompactness of a supercompact cardinal $\kappa$ is to construct a generic extension where any elementary embedding witnessing the $C^{(1)}$-supercompactness of $\kappa$ would yield to the existence of a $\square_\lambda$-sequence above $\kappa$. To implement this idea one needs to force \textit{many} square sequences below $\kappa$ and afterwards argue that this is upwards reflected by any $C^{(1)}$-supercompact embedding with critical point $\kappa$. This is interesting since it points out that despite the existence of many squares sequences is not an inconvenience for supercompactness it does for $C^{(1)}$-supercompactness.

Our forcing construction will be an Easton support iteration guided by some Laver function on $\kappa$ of the canonical forcings for adding square sequences. Once one proves that this forcing is harmless with respect to the supercompactness of $\kappa$ it is not hard to prove that there are no witnesses for $C^{(1)}$-supercompactness in the generic extension. In particular theorem \ref{supercompnotc1}  yields to the next result of consistency:
%In particular, if one starts with a model with two $C^{(1)}$-supercompact cardinals theorem \ref{supercompnotc1}  leads to the following corollary:

\begin{cor}\label{CorConsist1}
$\Con(\ZFC+\GCH+\exists \kappa,\lambda\,(\kappa,\lambda\in\mathfrak{S}^{(1)}))$ implies $\Con(\ZFC+\GCH+\min\mathfrak{S}<\min\mathfrak{S}^{(1)})$.
\end{cor}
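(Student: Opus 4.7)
The plan is to use Theorem \ref{supercompnotc1} on the smaller of the two $C^{(1)}$-supercompact cardinals and invoke a L\'evy--Solovay-type preservation for the larger one. I start from a model $V$ of $\ZFC+\GCH$ containing two $C^{(1)}$-supercompact cardinals $\kappa<\lambda$. Since $C^{(1)}$-supercompactness implies supercompactness, both $\kappa,\lambda\in\mathfrak{S}$, so the hypotheses of Theorem \ref{supercompnotc1} hold at $\kappa$. Let $\mathbb{P}$ be the Easton-support iteration (of length $\kappa$, guided by a Laver function on $\kappa$) of the canonical $\square_\mu$-forcings supplied by that theorem; it has size $\kappa$.

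In $V^{\mathbb{P}}$ I expect four facts: (a) $\kappa$ remains supercompact, (b) $\kappa$ is not $C^{(1)}$-supercompact in the strong sense of Theorem \ref{supercompnotc1} (no $j\colon V^{\mathbb{P}}\to M$ with $\crit(j)=\kappa$, $M^\omega\subseteq M$, and $j(\kappa)$ a limit cardinal), (c) the set $S:=\{\mu<\kappa: V^{\mathbb{P}}\models\square_\mu\}$ is stationary, hence unbounded, in $\kappa$, and (d) $\GCH$ continues to hold. Items (a) and (b) are direct quotations of Theorem \ref{supercompnotc1}; (c) follows from the Apter-style design of the iteration indicated before \ref{supercompnotc1}; and (d) from routine chain-condition and closure bookkeeping on the iterands under $\GCH$.

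Next I would show that $\lambda$ stays $C^{(1)}$-supercompact in $V^{\mathbb{P}}$ by a small-forcing lifting. Since $|\mathbb{P}|=\kappa<\lambda$, for each $\theta\geq\lambda$ I pick in $V$ an embedding $j\colon V\to M$ witnessing the $\theta$-$C^{(1)}$-supercompactness of $\lambda$. Because $\mathbb{P}\in V_\kappa\subseteq M$, the same generic $G$ is $\mathbb{P}$-generic over $M$, so the standard lifting produces $j^\ast\colon V^{\mathbb{P}}\to M[G]$ with $\crit(j^\ast)=\lambda$, $j^\ast(\lambda)=j(\lambda)$, and $M[G]^\theta\subseteq M[G]$. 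The delicate point is verifying $j(\lambda)\in(C^{(1)})^{V^{\mathbb{P}}}$: this rests on the standard fact that small forcing preserves the class $C^{(1)}$ above $|\mathbb{P}|$, because any $\Sigma_1$ statement of $V^{\mathbb{P}}$ translates, via the forcing relation, into a $\Sigma_1$ statement of $V$ localised at the same rank.

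Putting everything together: by (a), $\kappa\in\mathfrak{S}$ in $V^{\mathbb{P}}$; by (c) together with Solovay's theorem that a supercompact cardinal precludes $\square_\nu$ for $\nu\geq\mu$, no $\mu<\kappa$ can be supercompact in $V^{\mathbb{P}}$, and hence $\min\mathfrak{S}=\kappa$. By (b), $\kappa\notin\mathfrak{S}^{(1)}$; combined with the previous paragraph, $\kappa<\min\mathfrak{S}^{(1)}\leq\lambda$. Thus $V^{\mathbb{P}}\models\min\mathfrak{S}<\min\mathfrak{S}^{(1)}$, and $\GCH$ holds by (d), giving the desired consistency. The main obstacle, as I see it, is the $C^{(1)}$-preservation verification in the third paragraph --- explicitly checking that $j(\lambda)$ continues to lie in $C^{(1)}$ after forcing with $\mathbb{P}$; everything else is comparatively routine once Theorem \ref{supercompnotc1} is in hand.
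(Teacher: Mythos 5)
Your proposal is correct and follows essentially the same route as the paper: apply Theorem \ref{supercompnotc1} at the smaller $C^{(1)}$-supercompact cardinal $\kappa$ to make it the least supercompact while destroying its $C^{(1)}$-supercompactness, and observe that $\mathbb{P}$ is small relative to $\lambda$, so $\lambda$ remains $C^{(1)}$-supercompact by a L\'evy--Solovay argument. The paper states the small-forcing preservation of $C^{(1)}$-supercompactness without proof; your third paragraph just fills in that (correct) verification, e.g.\ via the characterisation of $C^{(1)}$ as the club of $\alpha$ with $V_\alpha=H_\alpha$, which is unaffected by forcing of size $<\alpha$.
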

Working on the ideas needed for the proof of theorem \ref{supercompnotc1} we will show in subsection 2.2 how to use them to prove theorem \ref{StrengthTheorem1}.  As before, this result will automatically yield to the following consistency result:
\begin{cor}\label{CorConsist2}
$\Con(\ZFC+\GCH+\exists\kappa,\lambda\in\mathfrak{S}\,\exists\mu\in\mathfrak{S}^{(1)}(\lambda<\kappa<\mu))$ implies
$\Con(\ZFC + \min\mathfrak{M}<\min\mathfrak{K}<\min\mathfrak{S}<\min \mathfrak{S}^{(1)})$.
\end{cor}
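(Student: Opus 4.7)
The plan is a direct application of Theorem \ref{StrengthTheorem1}. Assuming the consistency hypothesis, fix a model $V$ of $\ZFC + \GCH$ in which there exist supercompact cardinals $\lambda < \kappa$ together with a $C^{(1)}$-supercompact cardinal $\mu > \kappa$. Working inside $V$, Theorem \ref{StrengthTheorem1} produces a forcing notion $\mathbb{P}$ whose $V$-generic extension $V[G]$ satisfies
$$\min\mathfrak{M}<\min\mathfrak{K}<\min\mathfrak{S}<\min\mathfrak{S}^{(1)}.$$
Since set-forcing preserves $\ZFC$, the model $V[G]$ witnesses the consistency of the target theory.

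There is essentially no obstacle here, since the corollary is merely a repackaging of Theorem \ref{StrengthTheorem1} as a relative consistency statement. The role of $\GCH$ is purely that of a hypothesis needed to invoke the theorem: it does not appear in the target conclusion, so no preservation argument on cardinal arithmetic is required. All the substantive combinatorial content---constructing the Easton-support iteration adding $\square$-sequences below the chosen supercompact, verifying that the two lower supercompacts (or appropriate cardinals playing the roles of $\min\mathfrak{K}$ and $\min\mathfrak{S}$) survive the forcing, and arguing that any hypothetical $C^{(1)}$-supercompact embedding with the targeted critical point would reflect a $\square$-sequence above that critical point and thereby contradict the supercompactness of $\mu$---has already been discharged within the proof of Theorem \ref{StrengthTheorem1} itself. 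The only genuinely new observation is the trivial metatheoretic one that a forcing construction carried out in an arbitrary model of the hypothesis yields a model of the conclusion, whence the relative consistency follows.
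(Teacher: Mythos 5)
Your proposal is correct and matches the paper's own treatment: the paper likewise derives Corollary \ref{CorConsist2} as an immediate consequence of Theorem \ref{StrengthTheorem1} (itself obtained by combining Propositions \ref{NotC1Further} and \ref{InVPell}), with no additional argument beyond passing from the forcing extension to the relative consistency statement. Your observation that $\GCH$ appears only in the hypothesis and needs no preservation argument is consistent with the paper's formulation.
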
  

\subsection{The proof of theorem \ref{supercompnotc1}}
Let us start recalling some basic notions that are nece\-ssary for the proof of theorem \ref{supercompnotc1}.
\begin{defi}[$\square$-sequences]
Let $\mu\leq \kappa$ be two cardinals. A $\square_{\kappa,\mu}$-sequence is a sequence of sets $\vec{\mathcal{C}}=\langle\mathcal{C}_\alpha:\,\alpha\in \mathrm{Lim}\cap \kappa^+\rangle$\footnote{Here $\mathrm{Lim}$ denotes the class of all limit ordinals.} such that the following properties hold:
\begin{enumerate}
\item[(a)] For each $\alpha\in\mathrm{Lim}\cap \kappa^+$ the set $\mathcal{C}_\alpha$ is a family of club sets on $\alpha$ with $1\leq |\mathcal{C}_\alpha|\leq\mu$.
\item[(b)] For each $\alpha\in\mathrm{Lim}\cap \kappa^+$ with $\cf(\alpha)<\kappa$ the family $\mathcal{C}_\alpha$ only contains sets $C$ with $\mathrm{otp}(C)<\kappa$.
\item[(c)] For each $\alpha\in\mathrm{Lim}\cap \kappa^+$, the family $\langle\mathcal{C}_\beta:\,\beta\in \mathrm{Lim}\cap \alpha\rangle$ is coherently disposed; namely,
$$\forall C\in\mathcal{C}_\alpha\,\forall\beta\in \mathrm{Lim}(C)\, C\cap \beta\in\mathcal{C}_\beta.$$
\end{enumerate}
We shall say that $\square_{\kappa,\mu}$ holds if there is a $\square_{\kappa,\mu}$-sequence. Similarly, we will say that $\square_{\kappa, <\mu}$ holds if $\square_{\kappa,\theta}$ holds, for each $\theta<\mu$. We shall denote by $\square_\kappa$ and by $\square_\kappa^*$ the principles $\square_{\kappa, 1}$ and $\square_{\kappa,\kappa}$, respectively.
\end{defi}
There is a canonical forcing for adding a $\square_{\lambda,\mu}$-sequence by approximations but for the purposes of the current paper it will be enough to present the definition of the forcing for adding a $\square_\lambda$-sequence.
%In this section we shall argue that many instance of $\square_\lambda$-sequences below a supercompact cardinal is incompatible with $C^{(1)}$-supercompact\-ness whilst it is harmless with respect supercompactness. For the sake of completeness we will first introduce the canonical forcing to add a $\square_\lambda$-sequence and will describe its main features.
\begin{defi}\label{ForcingSquare}
Let $\lambda$ be an uncountable cardinal. The canonical poset for forcing a $\square_\lambda$-sequence $\mathbb{P}_{\square_\lambda}$ is the set of conditions $p$ such that
\begin{itemize}
\item[(a)] $p$ is a function with $dom(p)=(\alpha+1)\cap Lim$ with $\alpha\in\lambda^+\cap Lim$.
\item[(b)] For every $\beta\in dom(p)$, $p(\beta)\subseteq\beta$ is a club subset with $otp(p(\beta))\leq\lambda$.
\item[(c)] If $\beta\in dom(p)$ $\forall\gamma\in p(\beta)\cap Lim$ $(p(\gamma)=p(\beta)\cap \gamma)$.
\end{itemize}
endowed with the reverse end-extension order.
\end{defi}
Standard arguments show that $\mathbb{P}_{\square_\lambda}$ is a $(\lambda+1)$-strategically closed forcing (see \cite{CumIter}) and under $\mathrm{GCH}$, since $|\mathbb{P}_{\square_\lambda}|=\lambda^+$, it preserves cofinalities and respects the $\mathrm{GCH}$ pattern.

Many times it is helpful for carrying out lifting arguments that our iteration is defined in a  \textit{sparse enough set} of cardinals. The standard setting for such kind of arguments is described by a forcing iteration $\mathbb{P}$, an elementary embedding $j: V\rightarrow M$ and a factorization of the form $j(\mathbb{P})\cong\mathbb{P}\ast \dot{\mathbb{Q}}$. Under these conditions one expects that $\dot{\mathbb{Q}}$ enjoys of some closure property that helps to find  a $\dot{\mathbb{Q}}$-generic filter over $M^\mathbb{P}$. For instance, if $\mathbb{Q}$ is closed enough in $M^\mathbb{P}$ it is usual to build such a generic filter by means a diagonalization argument.

One of the standard procedures to build such iterations consist in guiding the iteration with a function $\ell$ presenting some \textit{fast behaviour}. Despite that we will need to consider slightly more general fast functions (see the preliminary discussion of Section 3), in this part we will only be interested in the case where $\ell$ is a Laver function. Recall that if $\kappa$ is a supercompact cardinal a function $\ell: \kappa\rightarrow V_\kappa$ is called a Laver function if for every $\lambda>\kappa$ there is a $\lambda$-supercompact elementary embedding $j:V\rightarrow M$  with $j(\ell)(\kappa)>\lambda$ \cite{Lav}.

% of some specific kind with $\crit(j)=\kappa$ and $j(\ell)(\kappa)>\lambda$. Examples of such functions are Cohen reals, Fast functions (see \cite{HamLott}) or Laver functions. On the sequel, $\ell: \kappa\rightarrow V_\kappa$ will denote some Laver function over $\kappa$ (see \cite{Lav}). 

Without loss of generality we may and do assume that the domain of $\ell$ is the club set of closure points $\alpha$ of $\ell$ (i.e. $\ell''\alpha\subseteq V_\alpha$) that are also strong limit cardinals.
\begin{defi}
Let $\mathbb{P}^\ell_\kappa$ be the $\kappa$-Easton support iteration\footnote{Namely, direct limits are taken at inaccessible cardinals and inverse limits elsewhere.} where $\mathbb{P}^\ell_0$ is the trivial forcing and for each ordinal $\alpha<\kappa$, if $\alpha\in dom(\ell)\cap E^\kappa_\omega$ and  $\Vdash_{\mathbb{P}^\ell_\alpha}\text{``}\check{\alpha}^+\text{ is a cardinal''}$ then $\Vdash_{\mathbb{P}^\ell_\alpha}\text{``}\mathbb{\dot{Q}}_\alpha=\mathbb{P}_{\square_\alpha}\text{''}$ and $\Vdash_{\mathbb{P}^\ell_\alpha}\text{``}\mathbb{\dot{Q}}_\alpha\text{ is trivial''}$, otherwise. 
\end{defi}
%Of course, the guiding set $\dom(\ell)\cap E^\kappa_\omega$ is a stationary subset of $\kappa$. T
The next proposition shows that $\mathbb{P}^\ell_\kappa$ forces a $\square_\lambda$-sequence for each $\lambda\in \dom(\ell)\cap E^\kappa_\omega$ and thus $\square_\lambda$ holds in a stationary subset of $\kappa$.
\begin{prop}
Assume $GCH$. The iteration $\mathbb{P}^\ell_\kappa$ preserves cardinals, the $\mathrm{GCH}$ pattern and yields to a generic extension $V^{\mathbb{P}^\ell_\kappa}$ where $\square_\lambda$ holds, for all cardinal $\lambda\in E^\kappa_\omega\cap dom(\ell)$. 
\end{prop}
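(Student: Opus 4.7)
The plan is to factor $\mathbb{P}^\ell_\kappa \cong \mathbb{P}^\ell_\lambda \ast \dot{\mathbb{Q}}_\lambda \ast \dot{\mathbb{P}}^\ell_{(\lambda,\kappa)}$ at each $\lambda \in \dom(\ell) \cap E^\kappa_\omega$ and to exploit the sparsity of the non-trivial stages: they occur only at strong-limit closure points of $\ell$ of cofinality $\omega$, so consecutive non-trivial coordinates are separated by a large gap. Combined with the $(\beta+1)$-strategic closure of each $\mathbb{P}_{\square_\beta}$, this will yield both cardinal/$\GCH$ preservation and the persistence of each newly added square-sequence through the tail.

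\emph{Cardinals and $\GCH$.} By induction on $\alpha \leq \kappa$ I would show, under $\GCH$, that $|\mathbb{P}^\ell_\alpha| \leq \alpha$ whenever $\alpha$ is inaccessible (direct limits handle this case) and $|\mathbb{P}^\ell_\alpha| \leq \alpha^+$ otherwise; the only non-trivial local steps $\mathbb{Q}_\beta = \mathbb{P}_{\square_\beta}$ contribute size $\beta^+$ and sit at strong-limit closure points of $\ell$, so the size estimates propagate cleanly through the Easton-support limits. Dually, the tail $\mathbb{P}^\ell_{[\alpha,\kappa)}$ viewed in $V^{\mathbb{P}^\ell_\alpha}$ is $\alpha^+$-strategically closed: the next non-trivial coordinate past $\alpha$ lies well above $\alpha^+$, and the usual product-of-strategies argument propagates strategic closure through inverse limits at singular cardinals. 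Standard bookkeeping then delivers cardinal preservation and $2^\theta = \theta^+$ for every cardinal $\theta$ of the generic extension.

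\emph{Survival of $\square_\lambda$.} Fix $\lambda \in \dom(\ell) \cap E^\kappa_\omega$ and let $\vec{\mathcal{C}}$ be the $\square_\lambda$-sequence added by $\dot{\mathbb{Q}}_\lambda$ over $V^{\mathbb{P}^\ell_\lambda}$. By the closure analysis above, the tail $\dot{\mathbb{P}}^\ell_{(\lambda,\kappa)}$ is $\lambda^+$-strategically closed in $V^{\mathbb{P}^\ell_\lambda \ast \dot{\mathbb{Q}}_\lambda}$, hence it adds no new bounded subsets of $\lambda^+$ and, in particular, no new clubs in ordinals below $\lambda^+$. Therefore conditions (a)--(c) of the definition of a $\square_\lambda$-sequence are preserved, and $\vec{\mathcal{C}}$ remains a $\square_\lambda$-sequence in $V^{\mathbb{P}^\ell_\kappa}$.

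The main technical nuisance I anticipate is the limit-stage bookkeeping in the Easton iteration: one has to verify that strategic closure genuinely survives inverse limits at singular cardinals (standard product-of-strategies argument) and that direct limits at inaccessibles do not inflate the size bounds. Both points are routine but rely on the hypothesis that $\dom(\ell)$ is confined to strong-limit $\omega$-cofinal closure points of $\ell$, since it is precisely this sparsity that leaves enough room between consecutive non-trivial coordinates for the closure and size estimates to propagate.
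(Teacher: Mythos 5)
Your proposal is correct and follows essentially the same route as the paper: factor the iteration as $\mathbb{P}^\ell_{\lambda+1}\ast\dot{\mathbb{P}}^\ell_{tail}$ at each $\lambda\in\dom(\ell)\cap E^\kappa_\omega$, use the chain condition of the initial segment together with the $\lambda^+$-strategic closure of the tail to preserve cardinals and the $\GCH$ pattern, and conclude that the tail's distributivity leaves the freshly added $\square_\lambda$-sequence intact. The extra bookkeeping you supply for the Easton limits is exactly what the paper leaves implicit.
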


\begin{proof}
The first claim easily follows from the comments after definition \ref{ForcingSquare} so it is enough to prove the claim about the $\square_\lambda$-sequences. Let $\lambda\in dom(\ell)\cap E^\kappa_\omega$ be a cardinal and notice that $\mathbb{P}^\ell_\kappa$  factorizes as $ \mathbb{P}^\ell_{\lambda+1}\ast \dot{\mathbb{P}}^\ell_{tail}$, where $\dot{\mathbb{P}}^\ell_{tail}$ is some $\mathbb{P}^\ell_{\lambda+1}$-name for a $\lambda^+$-strategically closed iteration. Now notice that $\mathbb{P}^\ell_\lambda$ is $\lambda^+$-cc, hence $\mathbb{P}^\ell_{\lambda+1}$ forces $\square_\lambda$, and  $\mathbb{P}^\ell_{tail}$ preserves $(\lambda^+)^{V^{\mathbb{P}^\ell_{\lambda+1}}}$ so $\Vdash_{\mathbb{P}^\ell_\kappa}\text{``$\square_\lambda$ holds''}$.
\end{proof}
\begin{prop}\label{PreservationOfSuper}
Forcing with $\mathbb{P}^\ell_\kappa$ preserves the supercompactness of $\kappa$. Moreover in $V^{\mathbb{P}^\ell_\kappa}$, $\kappa$ is the first supercompact.
\end{prop}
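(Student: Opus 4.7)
\medskip

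\textbf{Proof proposal.} The plan combines a standard master-condition/diagonalization lifting argument with an appeal to Solovay's theorem for the ``first supercompact'' clause.

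Fix $\lambda>\kappa$ and let $G$ be $\mathbb{P}^\ell_\kappa$-generic over $V$; the goal is to witness $\lambda$-supercompactness of $\kappa$ in $V[G]$. By the Laver property of $\ell$, pick $j\colon V\to M$ with $\crit(j)=\kappa$, $j(\kappa)>\lambda$, $M^\lambda\subseteq M$, and $j(\ell)(\kappa)>\lambda$. Working in $M$, the iteration $j(\mathbb{P}^\ell_\kappa)$ factors as $\mathbb{P}^\ell_\kappa\ast\dot{\mathbb{P}}_{tail}$: the stage $\kappa$ is trivial because $\cf^M(\kappa)=\kappa\neq\omega$, so $\kappa\notin E^{j(\kappa)}_\omega$. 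Moreover, any closure point $\alpha\in\dom(j(\ell))$ with $\alpha>\kappa$ must satisfy $j(\ell)(\kappa)\in V_\alpha$, hence $\alpha>j(\ell)(\kappa)>\lambda$; combined with the WLOG requirement that such $\alpha$ are strong-limit cardinals, we get $\alpha\geq\lambda^+$. Therefore $\dot{\mathbb{P}}_{tail}$ has trivial iterands throughout $(\kappa,\lambda]$, its first non-trivial iterand is some $\mathbb{P}_{\square_\alpha}$ with $\alpha\geq\lambda^+$, and the Easton-support iteration inherits $\lambda^{++}$-strategic closure in $M[G]$.

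Next, build an $H\in V[G]$ that is $\mathbb{P}_{tail}$-generic over $M[G]$. Under $\GCH$, $|M|^V=\lambda^+$, so $|M[G]|^V=\lambda^+$ and $M[G]$ enumerates at most $\lambda^+$-many dense subsets of $\mathbb{P}_{tail}$; moreover $M[G]^\lambda\subseteq M[G]$ holds in $V[G]$ since $|\mathbb{P}^\ell_\kappa|\leq\lambda$ and $M^\lambda\subseteq M$. The $\lambda^{++}$-strategic closure of $\mathbb{P}_{tail}$ then supports the standard diagonalization in $V[G]$, delivering $H$. One lifts $j$ to $j^\star\colon V[G]\to M[G][H]$ by $j^\star(\dot x_G)=j(\dot x)_{G\ast H}$; elementarity follows from the usual criterion once one notes that $j\!\restriction\! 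V_\kappa=\mathrm{id}$, so $j[G]=G\subseteq G\ast H$. Defining $U:=\{X\subseteq\pikappalambda^{V[G]}:j^\star[\lambda]\in j^\star(X)\}$ yields a normal fine $\kappa$-complete ultrafilter on $\pikappalambda$ in $V[G]$, witnessing $\lambda$-supercompactness of $\kappa$. As $\lambda$ was arbitrary, $\kappa$ is supercompact in $V[G]$.

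For the ``moreover'' clause, suppose towards a contradiction that some $\mu<\kappa$ is supercompact in $V[G]$. Then $\mu$ is strongly compact, so Solovay's theorem yields $\neg\square_\lambda$ for every $\lambda\geq\mu$. However, the preceding proposition produces $\square_\lambda$ in $V[G]$ for every $\lambda\in E^\kappa_\omega\cap\dom(\ell)$, and this set is stationary in $\kappa$ (a club intersected with a stationary set), so it meets $(\mu,\kappa)$, contradicting the supercompactness of $\mu$.

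The principal technical obstacle lies in the cardinal-arithmetic bookkeeping for the diagonalization: matching the bound $\lambda^+$ on the dense sets enumerated by $M[G]$ against the strategic closure of $\mathbb{P}_{tail}$, both of which hinge on $\GCH$ and on the fact that $j(\ell)(\kappa)>\lambda$ pushes the first non-trivial tail iterand above $\lambda^+$. The remaining pieces---elementarity of $j^\star$, the derivation of $U$, and the Solovay appeal---are routine.
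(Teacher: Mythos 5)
Your argument is correct in substance, but it takes a genuinely different route from the paper's at the key step. You follow the \emph{lifting strategy}: count the dense subsets of $\mathbb{P}_{tail}$ lying in $M[G]$, use the strategic closure of the tail together with $M[G]^\lambda\subseteq M[G]$ to run a diagonalization \emph{inside} $V[G]$, and thereby produce a tail generic $H\in V[G]$ and a lifted embedding definable in $V[G]$. The paper instead uses what it later calls the \emph{extender strategy}: it takes $H$ to be an arbitrary $\mathbb{P}^*_{tail}$-generic over $M[G]$ (living only in the outer model $V[G\ast H]$), lifts there, derives the supercompactness measure $\mathcal{U}$ on $\pikappalambda^{V[G]}$ in $V[G\ast H]$, and then pulls $\mathcal{U}$ back into $V[G]$ by noting that $|\mathcal{U}|<\theta$ (counting nice names) while the tail is $\theta^+$-strategically closed, hence too distributive to have added $\mathcal{U}$. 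The paper's route avoids the dense-set bookkeeping entirely and is insensitive to exactly how many dense sets $M[G]$ sees; your route yields the stronger conclusion that the lifted embedding itself is definable in $V[G]$, at the cost of the $\mathrm{GCH}$ arithmetic. One point in your write-up needs repair: ``$|M|^V=\lambda^+$'' is not literally meaningful ($M$ is a proper class); what you need is that the \emph{set of dense subsets of $\mathbb{P}_{tail}$ in $M[G]$} has $V[G]$-cardinality at most $2^{\lambda^{<\kappa}}$ (which is $\lambda^{+}$ or $\lambda^{++}$ under $\GCH$, depending on $\cf\lambda$), and that this is dominated by the strategic closure of the tail --- which it is, since the first non-trivial iterand sits at a strong limit cardinal above $j(\ell)(\kappa)>\lambda$, hence above $\beth_\omega(\lambda)$. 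With that restated (or with $\lambda$ replaced throughout by $\theta=(2^{\lambda^{<\kappa}})^+$ as the paper does), the diagonalization closes. Your treatment of the ``moreover'' clause via Solovay and the stationarity of $E^\kappa_\omega\cap\dom(\ell)$ matches the paper's.
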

\begin{proof}
The last claim follows immediately from the result of Solovay, \cite{Sol}.
Working in $V$, let $\lambda>\kappa$, $\theta=(2^{\lambda^{<\kappa}})^+$ and  $j:V\rightarrow M$ be some $\theta$-supercompact embedding such that $j(\ell)(\kappa)>\theta$ and $G\subseteq \mathbb{P}^\ell_\kappa$ a generic filter over $V$. First of all, since $j(\ell)\upharpoonright\kappa=\ell$, the forcing $j(\mathbb{P}^\ell_\kappa)$ factorizes as 
\[j(\mathbb{P}^\ell_\kappa)\cong \mathbb{P}^\ell_\kappa\ast \mathbb{Q}\ast \mathbb{P}_{tail}.\]

where $\mathbb{Q}$ is forced to be the trivial poset because $\cf^M(\kappa)>\omega$. On the other hand, $\Vdash_{\mathbb{P}^\ell_\kappa}\text{``$ \mathbb{Q}\ast\mathbb{P}_{tail}$ is $\theta$-strategically closed''}$ since $j(\ell)(\kappa)>\theta$. For the ease of notation we shall denote by $\mathbb{P}^*_{tail}$ the iteration $\mathbb{Q}\ast \mathbb{P}_{tail}$.
 The conditions in $\mathbb{P}^\ell_\kappa$ have bounded support in $\kappa$, hence $j\upharpoonright \mathbb{P}^\ell_\kappa=id$,  so $j''G\subseteq G\ast H$, for any $H\subseteq ({\mathbb{P}^*_{tail}})_G$ generic filter over $M[G]$. Set
$j^*:V[G]\rightarrow M[G\ast H]\subseteq V[G\ast H]$ be the corresponding lifting.
Since $\kappa$ is a Mahlo cardinal and $\mathbb{P}^\ell_\kappa$ is a $\kappa$-Easton support iteration of forcings in $V_\kappa$ the iteration $\mathbb{P}^\ell_\kappa$ is $\kappa$-cc and thus $M[G]$ remains closed by $\theta$-sequences. Similarly, since $({\mathbb{P}^*_{tail}})_G$ is $\theta^+$-strategically closed in $M[G]$ and $M[G]^\theta\subseteq M[G]$, one may argue that $M[G\ast H]$ is closed under $\theta$-sequences and  that $({\mathbb{P}^*_{tail}})_G$ is also $\theta$-strategically closed in the $V[G]$.

Working in the generic extension $V[G\ast H]$, it is straightforward to show that
$$X\in\mathcal{U}\,\longleftrightarrow\, X\subseteq (\pikappalambda)^{V[G]}\,\wedge\, j''\lambda\in j(X)$$
defines a $\lambda$-supercompact measure over $\pikappalambda^{V[G]}$. By standard arguments of counting nice names it can be checked that $\mathcal{U}$ has cardinality less than $\theta$. On the other hand, $({\mathbb{P}^*_{tail}})_G$ is $\theta^+$-strategically closed in $V[G]$ and thus the measure $\mathcal{U}$ was not introduced by the forcing $({\mathbb{P}^*_{tail}})_G$. Altogether this argument shows that $\mathcal{U}\in V[G]$; hence $\kappa$ is $\lambda$-supercompact in $V[G]$. Provided that $\lambda$ was chosen arbitrarily we have already proved that $\kappa$ remains fully supercompact after forcing with $\mathbb{P}^\ell
_\kappa$.
\end{proof}
We are now in conditions to prove theorem \ref{supercompnotc1}:
\begin{proof}[Proof of theorem \ref{supercompnotc1}]
For the rest of the proof fix $G\subseteq \mathbb{P}^\ell_\kappa$ a generic filter over $V$. Aiming for a contradiction let us assume that there is a supercompact embedding $j:V[G]\rightarrow M$ with $\crit(j)=\kappa$, $M^\omega\subseteq M$ and $j(\kappa)$ being a limit cardinal. Appealing to the closure properties of $M$ and to the elementarity of $j$ it is not hard to realize that the cardinal $j(\kappa)$ has uncountable cofinality in $V[G]$ and that there is a $\square_\lambda$-sequence in $M$, for each $\lambda\in j(\dom(\ell)\cap E^\kappa_\omega)$.

 On the other hand, since $\cf(j(\kappa))>\omega$, $E^{j(\kappa)}_\omega$ is a stationary set in $V[G]$ and thus also the set $j(E^\kappa_\omega \cap \dom(\ell))$. Let $\lambda\in j(E^\kappa_\omega \cap \dom(\ell))$ be some ordinal greater than $\kappa$ and notice that, of course, $\square_\lambda$ holds in $M$. Nevertheless we shall prove that this is also the case in $V[G]$ to yield to the desired contradiction. Aiming for this, it will be sufficient with proving that $M$ and $V[G]$ agree on the computations of the successor of $\lambda$: namely, $\left(\lambda^+\right)^{V[G]}=\left(\lambda^+\right)^M$. 
Since GCH holds in $V[G]$, hence also in $M$, and $M$ is closed by $\omega$-sequences, $({}^\omega\lambda)^M= {}^\omega \lambda$, $\lambda^{+}=\lambda^\omega$ and $(\lambda^{+})^M=(\lambda^\omega)^M$. Combining these expressions the equality $\lambda^+=(\lambda^+)^M$ follows. Finally this have proved that $\square_\lambda$ holds in $V[G]$ contradicting the supercompactness of $\kappa$.
\end{proof}
The same argument as before actually proves something stronger: for each cardinal $\lambda<\kappa$ the notion of $\lambda$-$C^{(1)}$-supercompactness is incompatible with $\square_\theta$ holding at each $\theta\in E^\kappa_{\leq \lambda}$. 
\begin{prop}
Assume $\mathrm{GCH}$ holds. Let $\kappa$ be a supercompact cardinal, $\lambda<\kappa$ and assume that for each $\theta\in E^\kappa_{\leq\lambda}$, $\square_\theta$-holds. Then there is no elementary embedding $j:V\rightarrow M$ such that $\crit(j)=\kappa$, $M^\lambda\subseteq M$ and $j(\kappa)$ being a limit cardinal.
\end{prop}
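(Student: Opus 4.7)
The plan is to copy, almost verbatim, the final portion of the proof of Theorem~\ref{supercompnotc1}, replacing $\omega$ by $\lambda$ throughout. Suppose for contradiction the existence of such an embedding $j\colon V\to M$. First I would verify that $\cf^V(j(\kappa))>\lambda$: otherwise a cofinal sequence into $j(\kappa)$ of length at most $\lambda$ would lie in $M$ by the closure hypothesis, contradicting the fact that $j(\kappa)$ is a cardinal of $M$. Since $\crit(j)=\kappa>\lambda$, one has $j(\lambda)=\lambda$, and by elementarity the hypothesis transfers to $M$: for every cardinal $\theta\in E^{j(\kappa)}_{\leq\lambda}$, $\square_\theta$ holds in $M$.

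Next, I would pick any singular cardinal $\theta'$ with $\kappa<\theta'<j(\kappa)$ and $\cf^V(\theta')\leq\lambda$; for instance $\theta':=\kappa^{+\omega}$, which lies below $j(\kappa)$ since the latter is a limit cardinal above $\kappa$. Because $M\subseteq V$ and $M^\lambda\subseteq M$, this $\theta'$ remains a cardinal in $M$ with $\cf^M(\theta')\leq\lambda$, so there exists a $\square_{\theta'}$-sequence $\vec{\mathcal{C}}\in M$ indexed by the limit ordinals below $({\theta'}^+)^M$. The goal is to argue that $\vec{\mathcal{C}}$ is also a bona fide $\square_{\theta'}$-sequence in $V$, thereby contradicting Solovay's theorem applied to the supercompact cardinal $\kappa$.

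The only nontrivial step is to verify that $({\theta'}^+)^M=({\theta'}^+)^V$. By elementarity $\GCH$ transfers from $V$ to $M$, so Hausdorff's formula together with $\cf(\theta')\leq\lambda<\theta'$ yields ${\theta'}^{\lambda}={\theta'}^{+}$ in $V$ and $({\theta'}^{\lambda})^M=({\theta'}^{+})^M$ in $M$. The closure condition $M^\lambda\subseteq M$ implies $({}^\lambda\theta')^M={}^\lambda\theta'$ as sets, and since $V$-cardinals are always $M$-cardinals one moreover has $({\theta'}^+)^M\leq({\theta'}^+)^V$. Chaining these equalities and inequalities yields the required agreement of the successor cardinals. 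This final cardinal arithmetic is the expected main obstacle, but it is a direct generalization of the $\omega$-case already carried out in the proof of Theorem~\ref{supercompnotc1} and requires no new ideas.
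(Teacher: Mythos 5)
Your proof is correct and follows exactly the route the paper intends: the paper gives no separate argument for this proposition, remarking only that ``the same argument as before'' (i.e.\ the proof of Theorem~\ref{supercompnotc1}) establishes it, and your replacement of $\omega$ by $\lambda$, the explicit witness $\theta'=\kappa^{+\omega}$ (legitimate since $\cf^V(j(\kappa))>\omega$ forces $\kappa^{+\omega}<j(\kappa)$), and the verification that $({\theta'}^{+})^M={\theta'}^{+}$ via ${}^{\lambda}\theta'\subseteq M$ constitute precisely that argument. The only cosmetic quibble is the appeal to ``Hausdorff's formula'': the equality ${\theta'}^{\lambda}={\theta'}^{+}$ is just the standard $\mathrm{GCH}$ computation for $\cf(\theta')\leq\lambda<\theta'$, exactly as in the $\omega$-case of the theorem.
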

We will finish this section with the proof of corollary \ref{CorConsist1}:
\begin{proof}[Proof of Corollary \ref{CorConsist1}]
Let $V$ be a model of $\GCH$ with two $C^{(1)}$-supercompact cardinals $\kappa<\lambda$. The previous theorem shows that $V^{\mathbb{P}}$ is a model where $\kappa$ is no longer $C^{(1)}$-supercompact and in fact it is the first supercompact. Since $\mathbb{P}$ is a small forcing, $\lambda$ is still $C^{(1)}$-supercompact in $V^\mathbb{P}$ and greater than $\kappa$. Combining both things we get a model for the theory $$\text{``$\ZFC+\GCH+\min\mathfrak{S}<\min\mathfrak{S}^{(1)}$''}.$$
\end{proof}

\subsection{Proof of theorem \ref{StrengthTheorem1}}
The way we have proceed to make the first supercompact cardinal smaller than the first $C^{(1)}$-supercompact is very aggressive: namely, we have forced that scenario paying the prize of making the first supercompact to be the first ($\omega_1$-)strongly compact. Therefore it is natural to ask whether these three notions may be forced to be different. Recall that $\mathfrak{M}$, $\mathfrak{K}$, $\mathfrak{S}$ and $\mathfrak{S}^{(1)}$ stand for the class of measurable, strongly compact, supercompact and $C^{(1)}$-supercompact cardinals, respectively. In the next pages we shall present some modifications to the arguments of section 2.1 that will yield to a proof for the consistency of
$\text{``$\min\mathfrak{M}<\min\mathfrak{S}<\min\mathfrak{S}<\min\mathfrak{S}^{(1)}$''}$.

Assume $\mathrm{GCH}$ and let $\lambda<\kappa$ be two supercompact cardinals with a $C^{(1)}$-supercompact cardinal $\mu$ above $\kappa$. By virtue of a result of Apter \cite{ApterIndestr}, after a preparatory iteration $\mathbb{Q}\subseteq V_\lambda$ of length $\lambda$, one can assume that $\lambda$ is the first strongly compact and the first strong cardinal and besides it is indestructible by $<\lambda$-directed closed forcings (i.e. $\theta$-directed closed, all $\theta<\lambda$) which are also $\lambda$-strategically closed. Thereby in $V^\mathbb{Q}$ the $\mathrm{GCH}$ pattern above $\lambda$ is preserved, $\lambda$ is the first strongly compact but not the first measurable cardinal and $\kappa, \mu$ remain supercompact and $C^{(1)}$-supercompact, respectively. For the ease of notation henceforth we will assume that $V=V^\mathbb{Q}$. Analogously to the former section here we will add many $\square_{\theta,\eta}$-square sequences below $\kappa$ taking care that both the strong compactness of $\lambda$ and the supercompactness of $\kappa$ are preserved. The next forcing notion is discussed with full details in \cite[Section 9]{CummingsSquare} and it is the main ingredient of our argument:
\begin{defi}
Let $\theta$ be a singular cardinal and let $\langle\theta_i:\,i\in \cf\theta\rangle$ be an increasing and cofinal sequence in $\theta$ with $\theta_0> \cf\theta$. We will denote by $\mathbb{S}_\theta$ the forcing whose conditions are of the form
\[p=\langle C^p_{\alpha, i}:\,\lim(\alpha),\, \alpha\leq \gamma^p,\, i^p(\alpha)\leq i < \cf\theta\rangle\]
witnessing 
\begin{enumerate}
\item $\gamma^p$ is a limit ordinal less than $\theta^+$.
\item $i^p$ is a function such that $i^p(\alpha)<\cf \theta$ for each limit $\alpha<\gamma$.
\item If $i^p(\alpha)\leq i<\mu$ then $C^p_{\alpha, i}$ is a club in $\alpha$ of $\otp(C^p_{\alpha,i})<\theta_i$.
\item If $i^p(\alpha)\leq i<j<\mu$ then $C^p_{\alpha,i}\subseteq C^p_{\alpha, j}$.
\item If $i^p(\beta)\leq i<\mu$ and $\alpha\in \lim (C^p_{\beta, i})$ then $i(\alpha)\leq i$ and $C^p_{\alpha,i}=C^p_{\beta, i}\cap \alpha$.
\item If $\alpha$ and $\beta$ are limit ordinals with $\alpha<\beta\leq\gamma$ then there is some $i(\alpha)\leq i_0$ such that for every $i_0\leq i<\cf\theta$ then $\alpha\in\lim (C^p_{\beta, i})$.
\end{enumerate}
We will say that $p\leq q$ iff
\begin{enumerate}
\item[(a)] $\gamma^q\leq \gamma^p$.
\item[(b)] If $\alpha\leq \gamma^q$ then $i^q(\alpha)=i^p(\alpha)$ and $C^q_{\alpha,i}=C^p_{\alpha,i}$ for each $i^q(\alpha)\leq i< \cf \theta$.
\end{enumerate}
\end{defi}
It is illustrative to think on the conditions of $\mathbb{S}_\theta$ as matrices of clubs which are promises for a potential $\square_{\theta, \cf(\theta)}$-sequence. 
This forcing, besides of adding a $\square_{\theta,\cf\theta}$-sequence, is $\cf \theta$-directed and $<\theta$-strategically closed. The interested reader may find a detailed proof of both properties in \cite[Section 9]{CummingsSquare}.
 Since $\theta$ is singular, hence $\mathbb{S}_\theta$ does not add $\theta$-sequences, cardinals and cofinalities up to $\theta^+$ are preserved. 
Furthermore, as $\GCH$ holds above $\lambda$, for any singular cardinal $\theta>\lambda$ the forcing $\mathbb{S}_\theta$ has cardinality $\theta^{+}$ and thus preserves  all possibles cofinalities as well as the $\mathrm{GCH}$ pattern above $\lambda$. Without loss of generality we will make the assumption that all the cardinals in $\dom(\ell)$ are strong limit above $\lambda$ that are closed under $\ell$.
\begin{defi}
Let $\mathbb{P}^\ell_\kappa$ be the $\kappa$-Easton support iteration where  $\mathbb{P}^\ell_0$ is the trivial forcing and for each ordinal $\theta<\kappa$, if $\theta\in dom(\ell)\cap E^\kappa_\lambda$ and $\Vdash_{\mathbb{P}^\ell_\theta}\text{``$\check{\theta}^+$ is a cardinal''}$ then $\Vdash_{\mathbb{P}^\ell_\theta}\text{``$\mathbb{\dot{Q}}_\theta=\mathbb{S}_\theta$''}$ and $\Vdash_{\mathbb{P}^\ell_\theta}\text{`` $\mathbb{\dot{Q}}_\theta$ is trivial''}$, otherwise. 
\end{defi}
The iteration $\mathbb{P}^\ell_\kappa$ is clearly $<\lambda$-directed closed and $\lambda$-strategically closed and thus $\lambda$ remains strongly compact and strong in the generic extension. The next proposition is the corresponding version  of proposition \ref{PreservationOfSuper} in the current setting:
\begin{prop}\label{NotC1Further}
The following statements are true in $V^{\mathbb{P}^\ell_\kappa}$:
\begin{enumerate}
\item $\lambda$ is strongly compact and strong and $\mu$ is $C^{(1)}$-supercompact.
\item There is a stationary set $S^*\subseteq E^\kappa_\lambda$ such that for every $\theta\in S^*$, $\square_{\theta,\lambda}$ holds. In particular, there is no strongly compact between $\lambda$ and $\kappa$.
\item $\kappa$ is supercompact but not $C^{(1)}$-supercompact. In fact, there is no elementary emebedding $j: V^{\mathbb{P}^\ell_\kappa}\rightarrow M$ with $j(\kappa)$ being a limit cardinal and $M^\lambda\subseteq M$. In particular, $\kappa$ is the first supercompact cardinal.
\end{enumerate}
\end{prop}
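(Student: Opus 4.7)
The argument has three pieces corresponding to the three items. For item (1), note that at every active stage $\theta\in S^*:=\dom(\ell)\cap E^\kappa_\lambda$, the forcing $\mathbb{S}_\theta$ is $\cf\theta$-directed closed ($=\lambda$-directed closed) and $<\theta$-strategically closed, and these closure properties propagate through the Easton-support iteration. Hence $\mathbb{P}^\ell_\kappa$ is $<\lambda$-directed closed and $\lambda$-strategically closed. Since Apter's preparatory forcing $\mathbb{Q}$ rendered the strong compactness and the strongness of $\lambda$ indestructible under precisely this class of forcings, both features survive in $V^{\mathbb{P}^\ell_\kappa}$. For $\mu$, the iteration has size $\kappa<\mu$, so a L\'evy--Solovay argument lifts any $\lambda$-$C^{(1)}$-supercompact embedding at $\mu$ through the small generic, while $C^{(1)}$ above $|\mathbb{P}^\ell_\kappa|^+$ is unchanged by small forcing; thus $j(\mu)\in C^{(1)}$ persists and $\mu$ remains $C^{(1)}$-supercompact.

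For item (2), fix $\theta\in S^*$ and factor $\mathbb{P}^\ell_\kappa\cong \mathbb{P}^\ell_\theta * \dot{\mathbb{S}}_\theta * \dot{\mathbb{P}}_{\mathrm{tail}}$. Under $\GCH$, $\mathbb{P}^\ell_\theta$ has size $\theta$ hence is $\theta^+$-cc and preserves $\theta^+$; $\mathbb{S}_\theta$ adds the desired $\square_{\theta,\lambda}$-sequence and preserves cardinals and cofinalities up to $\theta^+$ (Cummings); and the tail's first nontrivial stage is an $\ell$-closure point much larger than $\theta$, so $\mathbb{P}_{\mathrm{tail}}$ is $\theta^+$-strategically closed and preserves both $\theta^+$ and the newly added sequence. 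Stationarity of $S^*$ in $V^\mathbb{P}$ follows from $\kappa$-cc of the iteration (the Easton iteration at a Mahlo $\kappa$). The final clause is then immediate: a strongly compact $\nu$ with $\lambda<\nu<\kappa$ would, by the generalized Solovay theorem, exclude $\square_{\theta,\lambda}$ for any $\theta\geq\nu$ with $\cf\theta=\lambda<\nu$; picking any $\theta\in S^*$ above $\nu$ yields the contradiction.

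For item (3), the supercompactness of $\kappa$ in $V^{\mathbb{P}^\ell_\kappa}$ is preserved by a direct analogue of the lifting argument of Proposition \ref{PreservationOfSuper}: given $\theta>\kappa$, take a $\theta$-supercompact $j\colon V\to M$ with $j(\ell)(\kappa)>\theta$, factor $j(\mathbb{P}^\ell_\kappa)\cong \mathbb{P}^\ell_\kappa*\dot{\mathbb{Q}}*\dot{\mathbb{P}}_{\mathrm{tail}}$ where $\dot{\mathbb{Q}}$ is trivial since $\kappa$ is regular in $M$ and hence $\kappa\notin E^{j(\kappa)}_\lambda$, build an $M[G]$-generic for the $\theta$-strategically closed tail by diagonalization, and derive the required supercompactness measure in $V[G]$ by counting nice names. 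For the failure of $C^{(1)}$-supercompactness, suppose for contradiction that $j\colon V[G]\to M$ witnesses $\crit(j)=\kappa$, $M^\lambda\subseteq M$, and that $j(\kappa)$ is a limit cardinal in $V[G]$. Since $j(\kappa)$ is regular in $M$ and $M^\lambda\subseteq M$, any short cofinal sequence in $V[G]$ would lie in $M$, so $\cf^{V[G]}(j(\kappa))>\lambda$. By elementarity $j(S^*)=j(\dom\ell)\cap E^{j(\kappa)}_\lambda$ (computed in $M$) is stationary in $j(\kappa)$, so pick $\theta\in j(S^*)$ with $\theta>\kappa$. Then $M\models\square_{\theta,\lambda}$, and $\cf^{V[G]}\theta=\lambda$ by $\lambda$-closure of $M$. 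Invoking $\GCH$ in $V[G]$ and in $M$, together with $({}^\lambda\theta)^M=({}^\lambda\theta)^{V[G]}$, the bijection argument used in the proof of Theorem \ref{supercompnotc1} yields $(\theta^+)^M=(\theta^+)^{V[G]}$, so the $\square_{\theta,\lambda}$-sequence from $M$ is a genuine $\square_{\theta,\lambda}$-sequence in $V[G]$. This contradicts the supercompactness of $\kappa$ (already established) via the generalized Solovay theorem, since $\cf\theta=\lambda<\kappa<\theta$.

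The main obstacle is the transfer step in item (3): with only $\lambda$-closure of $M$, one cannot compare $(\theta^+)^M$ and $(\theta^+)^{V[G]}$ by closure alone, so one must combine the preservation of $\GCH$ above $\lambda$ by $\mathbb{P}^\ell_\kappa$ with the coincidence of $\lambda$-sequences in $M$ and $V[G]$ to identify both cardinals with $|{}^\lambda\theta|$. A secondary care point is tracking strategic closure through both iterations (the ground-model one and its $j$-image) so that the relevant successor cardinals, cofinalities, and square sequences are preserved at every step.
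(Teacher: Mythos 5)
Your proposal is correct and follows essentially the same route as the paper: item (1) via Apter's indestructibility together with a L\'evy--Solovay argument for $\mu$, item (2) by factoring the iteration at $\theta$ and using $\kappa$-cc for stationarity plus Solovay's covering argument against strong compacts, and item (3) by rerunning Proposition \ref{PreservationOfSuper} and the proof of Theorem \ref{supercompnotc1} with $\lambda$-closure and the preservation of $\GCH$ above $\lambda$ replacing $\omega$-closure. The only cosmetic differences are that you cite the generalized Solovay theorem where the paper reproves it ``for completeness,'' and that for the supercompactness lift you mention a diagonalization where the paper instead forces with the tail and recovers the measure in $V[G]$ by counting nice names and strategic closure --- a mechanism you also invoke, so nothing essential is missing.
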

\begin{proof}$ $
\begin{enumerate}
\item It follows from Apter's result.
\item Let any $\theta\in \mathrm{dom}(\ell)\cap E^\kappa_\lambda$ and notice that $\Vdash_{\mathbb{P}^\ell_{\theta+1}} \text{`` $\square_{\theta,\lambda}$ holds''}$. Set $\theta^*$ be the least cardinal in $\mathrm{dom}(\ell)\,\cap\, E^\kappa_\lambda$ above $\theta$. The iteration restricted to the interval $[\theta^*, \kappa)$ is $\theta^*$-startegically closed, hence $(\theta^+)^{V^{\mathbb{P}^\ell_{\theta+1}}}$ is preserved, and thus $\Vdash_{\mathbb{P}^\ell_\kappa} \text{``$\square_{\theta,\lambda}$ holds''}$. Finally, the iteration $\mathbb{P}^\ell_\kappa$ is $\kappa$-cc because $\kappa$ is Mahlo and thus the set $\mathrm{dom}(\ell)\cap E^\kappa_\lambda$ remains stationary in the generic extension.

 The further claim is a consequence of a well-known argument due to Solovay that we exhibit only for completeness. Aiming for a contradiction suppose that there is some $\lambda< \eta< \kappa$ being $\theta^+$-strongly compact cardinal, some $\theta\geq\eta$ in $S^*$. Let $j: V\rightarrow M$ be an elementary embedding with $cp(j)=\eta$ and $D\in M$ such that $j''\theta^+\subseteq D$ and $M\models |D|<j(\eta)$. Let $\vec{C}=\langle \mathcal{C}_\alpha:\,\lim(\alpha), \alpha\in\theta^+\rangle$ be the $\square_{\theta, \lambda}$-sequence forced by $\mathbb{P}^\ell_\kappa$ and $\vec{D}=j(\vec{C})$. Set $\theta^*=\sup(j''\theta^+)$ and notice that $\theta^*<j(\theta)^+$ and $\cf^M(\theta^*)<j(\eta)$. Let $D_{\theta^*}\in \mathcal{D}_{\theta^*}$ and define \linebreak$C=\{\alpha\in\theta^+:\, j(\alpha)\in D_{\theta^*}\}$ the asso\-ciated $<\eta$-club. Let $\gamma>\theta$ be a limit point of $C$ with $cof(\gamma)=\omega$ and $|C\cap \gamma|=\theta$. By continuity of $j$ in $\gamma$ it is the case that\linebreak $j(\gamma)\in \lim(D_{\theta^*})$. Notice that for every $\alpha\in C\cap \gamma$, the formula $\varphi(\alpha,\gamma)$ 
\begin{eqnarray*}
``\exists\theta'\in j(\theta)^+\, \exists D_{\theta'}\in \vec{D}(\theta')\, (cof(\theta')<j(\eta)\,\\ \wedge\, j(\gamma)\in\lim(D_{\theta'})  \wedge\,j(\alpha)\in D_{\theta'}\cap j(\gamma)) ''
\end{eqnarray*}
is true un in $M$ as witnessed by $\theta^*$. Thus for each $\alpha\in C\cap \gamma$ there is some $C_{\theta_\alpha}$ such that $\cf(\theta_\alpha)<\eta$, $\gamma\in lim(C_{\theta_\alpha})$ and $\alpha\in C_{\theta_\alpha}\cap \gamma$. Notice that all of these $C_{\theta_\alpha}\cap \gamma$ lie in $\mathcal{C}_\gamma$ and have cardinality less than $\theta$ (since $\cf(\theta_\alpha)<\eta<\theta$). Thus $C\cap \gamma$ can be covered by the union of all clubs in $\mathcal{C}_\gamma$ with cardinality less than $\theta$. Since $|\mathcal{C}_\gamma|\leq \cf(\theta)<\theta$, this union has cardinality less than $\theta$. Contradiction. 
\item The argument is the same as in proposition \ref{PreservationOfSuper} and theorem \ref{supercompnotc1} noting that $\mathbb{P}^\ell_\kappa$ preserve the $\GCH$ pattern above $\lambda$. 
\end{enumerate}
\end{proof}
We can also say something else about the status of $\lambda$ in the generic extension $V^{\mathbb{P}^\ell_\kappa}$:
\begin{prop}\label{InVPell}
The cardinal $\lambda$ is the first strong cardinal and the first strongly compact in $V^{\mathbb{P}^{\ell}_\kappa}$. In particular, $\lambda$ is greater than the first measurable of $V^{\mathbb{P}^\ell_\kappa}$.
\end{prop}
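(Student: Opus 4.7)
The plan is to leverage the high closure of $\mathbb{P}^\ell_\kappa$ together with the structure of $V$ after Apter's preparatory iteration $\mathbb{Q}$. Recall that in $V$ we have arranged: $\lambda$ is the first strong and the first strongly compact cardinal, while $\lambda$ is not the first measurable, i.e.\ there is some measurable $\eta_0<\lambda$ in $V$. The iteration $\mathbb{P}^\ell_\kappa$ is a $\kappa$-Easton support iteration whose nontrivial factors $\mathbb{S}_\theta$ are $<\theta$-strategically closed and $\cf(\theta)$-directed closed for $\theta\in E^\kappa_\lambda\cap\dom(\ell)$; since $\cf(\theta)=\lambda$ at every such $\theta$, the whole iteration is $<\lambda$-directed closed.

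First, I would argue that no cardinal $\eta<\lambda$ becomes measurable, strong, or strongly compact in $V^{\mathbb{P}^\ell_\kappa}$ unless it already was so in $V$. For measurability this is immediate: any normal measure $U$ on $\eta<\lambda$ in $V^{\mathbb{P}^\ell_\kappa}$ satisfies $U\subseteq \mathcal{P}(\eta)^{V^{\mathbb{P}^\ell_\kappa}}=\mathcal{P}(\eta)^V$, and since $|U|\leq 2^\eta<\lambda$ (by $\GCH$), the $<\lambda$-closure of $\mathbb{P}^\ell_\kappa$ forces $U\in V$. The analogous statement for strongness and strong compactness follows from the same general principle: either by invoking Hamkins' Gap Forcing Theorem, or by extracting $(\eta,\beta)$-extenders and fine ultrafilters on $\mathcal{P}_\eta(\beta)$ from the hypothetical witnesses and observing, one small piece at a time, that these objects must already live in the ground model.

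Combining this with Proposition \ref{NotC1Further}(1), which ensures that $\lambda$ remains strong and strongly compact in $V^{\mathbb{P}^\ell_\kappa}$, and recalling that in $V$ no cardinal below $\lambda$ is strong or strongly compact, we conclude that $\lambda$ is the first strong and the first strongly compact cardinal in $V^{\mathbb{P}^\ell_\kappa}$. For the last clause, the measurable $\eta_0<\lambda$ of $V$ survives into $V^{\mathbb{P}^\ell_\kappa}$: its normal measure lives in $V$, and by $<\lambda$-closure of $\mathbb{P}^\ell_\kappa$ it remains a normal measure on $\eta_0$ in the generic extension. Hence the first measurable in $V^{\mathbb{P}^\ell_\kappa}$ is at most $\eta_0<\lambda$.

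The main technical hurdle is the non-creation of strongness below $\lambda$: whereas the measurable case reduces to a plain subset-counting argument, verifying that no new strong cardinal appears requires handling $\eta$-extenders of arbitrary length, which is why one typically relies on a black-box result such as Hamkins' Gap Forcing Theorem, or on a careful derivation showing that each $(\eta,\beta)$-extender witnessing strongness in $V^{\mathbb{P}^\ell_\kappa}$ restricts to a ground-model extender producing the same degree of strongness in $V$.
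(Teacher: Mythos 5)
Your argument is correct, but it takes a genuinely different route from the paper's. You argue from the closure of the forcing: since $\mathbb{P}^\ell_\kappa$ is $<\lambda$-closed, a measure on $\eta<\lambda$ is a small object living on ground-model sets and hence lies in $V$, while for strongness and strong compactness you (rightly) fall back on Hamkins' Gap Forcing Theorem --- and this fallback is genuinely needed, since the witnesses there, $(\eta,\beta)$-extenders and fine ultrafilters on $\mathcal{P}_\eta(\beta)$ for arbitrarily large $\beta$, are not small and the iteration certainly adds new subsets of large sets, so the ``one small piece at a time'' heuristic does not apply to them; as you acknowledge, only the black box (or a real reworking of its proof) closes that case. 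The paper instead gives a short, self-contained reflection argument that avoids gap forcing entirely: if $\lambda^\star<\lambda$ were strong in $V[G]$, then, since strongness is $\Pi_2$ and $\lambda$ is strong (hence a $C^{(2)}$-cardinal) in $V[G]$ by Proposition \ref{NotC1Further}(1), $\lambda^\star$ would be strong in $V[G]_\lambda$; but $V[G]_\lambda=V_\lambda$ by $\lambda$-distributivity of the iteration, and since $\lambda$ is also strong (hence $C^{(2)}$) in $V$, the strongness of $\lambda^\star$ reflects up to $V$, contradicting the minimality of $\lambda$ there; strong compactness is treated similarly. Your approach buys robustness --- it uses only the closure of the forcing and would survive even if one did not know that $\lambda$ itself stays strong --- at the price of importing an external theorem; the paper's buys a two-line proof by exploiting the $C^{(2)}$-correctness of $\lambda$ in both models together with the fact that the forcing leaves $V_\lambda$ untouched. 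Two cosmetic points: $2^\eta<\lambda$ follows from the inaccessibility of $\lambda$ rather than from $\GCH$ (which is only guaranteed above $\lambda$ after Apter's preparation), and your explicit treatment of the surviving measurable $\eta_0<\lambda$ for the final clause is a welcome addition that the paper leaves implicit.
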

\begin{proof}
Let us simply show that $\lambda$ is still the least strong in the generic extension since the claim about strong compactness can be proved similarly. Let $\lambda^\star<\lambda$ be a strong cardinal in $V^{\mathbb{P}^{\ell}_\kappa}$. The property of being  a strong cardinal is $\Pi_2$ definable and any strong cardinal is a $C^{(2)}$-cardinal. It then follows from this and from (1) of proposition \ref{NotC1Further} that $\lambda^\star$ is strong within $V[G]_\lambda$. On the other hand notice that $V[G]_\lambda=V_\lambda$ because the iteration is $\lambda$-distributive, hence $\lambda^\star$ is strong in $V_\lambda$. Finally since $\lambda$ was a strong cardinal in $V$, hence $C^{(2)}$, it is the case that $\lambda^\star$ is also a strong cardinal in $V$ below $\lambda$. This yields to contradiction with the minimality of $\lambda$ in $V$.
\end{proof}
Combining propositions \ref{NotC1Further} and \ref{InVPell} the claim of theorem \ref{StrengthTheorem1} and corollary \ref{CorConsist2} easily follows.

\section{Identity crises: the first $C^{(n)}$-supercompact can be the first strongly compact. }\label{Section 3}

Let $\mathscr{L}$ be a large cardinal property and $\kappa$ be a cardinal such that $\mathscr{L}(\kappa)$. We say that $\ell:\kappa\rightarrow\kappa$ is a \textit{$\mathscr{L}$-fast function} on $\kappa$ if for every $\lambda>\kappa$ there is an $\mathscr{L}$-elementary embedding $j:V\rightarrow M$ with $\crit(j)=\kappa$ and $j(\ell)(\kappa)>\lambda$. There are many typical example of such sort of functions among which one must to highlight the Laver functions (see \cite{Lav}). Under our convention a Laver function on a supercompact cardinals is the same as a $\mathfrak{S}$-fast function. Another natural example of this sort of objects is given by Cohen reals where the homogeneity of $\Add(\kappa,1)$ yields to the desired fast behaviour (see e.g. lemma \ref{LemmaCohenfunction}). 

By results of Tsaprounis \cite{Tsan} it is known that any \Cn-extendible cardinal carries a $\mathfrak{E}^{(n)}$-Laver function and moreover that the standard Jensen iteration to force global $\mathrm{GCH}$ preserves \Cn-extendibility. For a general version of Tsaprounis' theorem see \cite{BP}.

 %Furthermore, as we pointed out so far, Tsaprounis has shown that there are $\mathfrak{E}^{(n)}$-fast functions, for each $n\geq 1$. For a self-contained and limpid exposition on this matter the reader may consult \cite[Section 1]{HamLott}.

%Throughout this section $n\geq 1$ and $\kappa$ will be a \Cn-extendible cardinal. 

Since the discovering of Laver functions fast functions have played a central role in iteration arguments. Essentially this sort of functions allows us to find arbitrary segments of $j(\mathbb{P})$ where the iteration is trivial which is a crucial property for lifting elementary embeddings.

Regrettably, due to the general lack of understanding of \Cn-supercompact cardinals, anything is known about the existence $\mathfrak{S}^{(n)}$-fast functions. The naive strategy for proving they exist will lead us to mimic Laver's construction of a Laver function even though we will eventually realize that this does not work. More precisely, there are obstacles to reflect the formula asserting that there is a counterexample for the existence of a $\mathfrak{S}^{(n)}$-fast function since it is  $\Pi_{n+2}$ while \Cn-supercompact cardinals are not necessarily $C^{(n+2)}$-correct\footnote{This consequence of theorem \ref{MainTheorem}}.

%A natural strategy for showing that \Cn-supercompact cardinals carry $\mathfrak{S}^{(n)}$-fast function will be to follow the original proof of Laver functions. If we try such way, we will eventually got stuck when having to reflect below $\kappa$ the formula asserting that there is a counterexample for the existence of the Laver function. The reason for this is that \Cn-supercompact cardinals are $\Pi_{n+2}$-definable (see \cite{Bag}) whereas their correctness is (consistently) not bigger than $C^{(2)}$.

An alternative strategy is to discuss whether some forcing notion adding a $\mathfrak{S}^{(n)}$-fast function preserves \Cn-supercompact cardinals. Nevertheless this strategy turns to be very problematic as we shall argue in Section \ref{OpenQuestions}. Anyway if we are given a \Cn-supercompact cardinal $\kappa$ in a generic extension $V[\ell]$ with $\ell\subseteq\kappa$ being a Cohen real (e.g.\ as in Tsaprounis's theorem), we may assume that $\ell$ is a $\mathfrak{S}^{(n)}$-fast function in $V[\ell]$ by virtue of the next result:
\begin{lemma}\label{LemmaCohenfunction}
Let $\ell:\kappa\rightarrow \kappa$ be a Cohen function over $V$. Let $j\colon V\to M$ be an elementary embedding with critical point $\kappa$ and let $\lambda < j(\kappa)$.  If there is an extension of $j$ to an elementary embedding $\tilde{j} \colon V[\ell] \to M[\tilde\ell]$ with $\tilde{j}(\ell) = \tilde{\ell}$ that extends $j$ then there is another extension of $j$, $\tilde{j}'\colon V[\ell] \to M[\tilde{\ell}']$, such that $\tilde{j}'(\ell) = \tilde{\ell}'$ and $\tilde{\ell}'(\kappa) = \lambda$. Moreover, if $\tilde{j}$ witness that $\kappa$ be a $\lambda$-\Cn-supercompact cardinal in $V[\ell]$ then so does $\tilde{j}'$. 
\end{lemma}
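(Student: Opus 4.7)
The plan is to obtain $\tilde{j}'$ from $\tilde{j}$ by twisting the Cohen generic $\tilde\ell$ by an automorphism of $j(\mathrm{Add}(\kappa,1))$ that swaps the values $\mu := \tilde\ell(\kappa)$ and $\lambda$ at coordinate $\kappa$ while acting trivially at every other coordinate. The key fact being exploited is the (weak) homogeneity of Cohen forcing at each individual coordinate.

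More precisely, let $\mathbb{A}$ denote the Cohen forcing adding $\ell$, and view $\tilde\ell$ as arising from an $M$-generic filter $\tilde G \subseteq j(\mathbb{A})$. For each pair $\alpha,\beta \in j(\kappa)$ in $M$, there is a canonical automorphism $\pi_{\alpha,\beta} \in M$ of $j(\mathbb{A})$ that implements the transposition $(\alpha\;\beta)$ on the value assigned to coordinate $\kappa$ and is the identity at every other coordinate. Set $\pi := \pi_{\lambda,\mu}$ and $\tilde G' := \pi\,\tilde G$. The subtlety is that $\pi$ itself need not lie in $M$, because $\mu = \tilde\ell(\kappa)$ is not in $M$. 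The resolution, however, is that genericity is checked one dense set at a time: for any dense $D \in M$ and any fixed parameter $\nu \in j(\kappa) \subseteq M$, the set $\pi_{\lambda,\nu}^{-1}D$ is dense and lies in $M$, hence is met by $\tilde G$; taking $\nu = \mu$ yields $\tilde G \cap \pi^{-1}D \neq \emptyset$, so $\tilde G' \cap D \neq \emptyset$.

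With $\tilde\ell'$ defined as the Cohen function of $\tilde G'$, we obtain $\tilde\ell'(\kappa) = \lambda$ and $\tilde\ell' \upharpoonright \kappa = \tilde\ell \upharpoonright \kappa = \ell$ by construction. Since every condition in the generic filter associated to $\ell$ has domain contained in $\kappa$ and is therefore fixed by $\pi$, the standard lifting criterion produces an elementary embedding $\tilde{j}'\colon V[\ell] \to M[\tilde\ell']$ with $\tilde j'(\ell) = \tilde\ell'$ and $\tilde j' \upharpoonright V = j$.

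Finally, for the moreover clause, since $\tilde j'(\kappa) = j(\kappa) = \tilde j(\kappa)$, the requirements $\crit(\tilde j') = \kappa$, $\tilde j'(\kappa) > \lambda$ and $\tilde j'(\kappa) \in C^{(n)}$ in $V[\ell]$ are inherited directly from $\tilde j$. For the closure condition $M[\tilde\ell']^\lambda \cap V[\ell] \subseteq M[\tilde\ell']$, observe that $\pi$ induces an $\in$-isomorphism $\sigma_\pi\colon M[\tilde\ell] \to M[\tilde\ell']$ living in $V[\ell]$ (since $\pi$ is computable from $\lambda,\mu \in V[\ell]$). Given a $\lambda$-sequence $\vec s = \langle s_\xi : \xi < \lambda\rangle \in V[\ell]$ of $M[\tilde\ell']$-elements, form $\vec t := \langle \sigma_\pi^{-1}(s_\xi) : \xi < \lambda\rangle$, a $\lambda$-sequence of $M[\tilde\ell]$-elements; the closure hypothesis on $\tilde j$ places $\vec t$ in $M[\tilde\ell]$, and then $\vec s = \sigma_\pi(\vec t) \in M[\tilde\ell']$ since $\sigma_\pi$ commutes with the formation of sequences. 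The main conceptual obstacle throughout is the a priori dependence of $\pi$ on a value $\mu \notin M$; as above, this is handled by noting that only one value of the parameter $\nu$ enters each density check, and that $\nu = \mu$, being an ordinal below $j(\kappa)$, already belongs to $M$.
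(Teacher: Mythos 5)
Your proposal is correct and takes essentially the same route as the paper: the paper's one-line appeal to the homogeneity of $\Add(j(\kappa),1)^M$ to produce an $M$-generic $H$ containing $\{\langle\kappa,\lambda\rangle\}$ with the same restriction below $\kappa$ and with $M[H]=M[\tilde\ell]$ is exactly your automorphism $\pi_{\lambda,\mu}$ applied to $\tilde G$, followed by the same Silver lifting and the observation that the target model and $j(\kappa)$ are unchanged. Your worry that $\pi\notin M$ dissolves just as you note at the end, since $\mu=\tilde\ell(\kappa)<j(\kappa)$ is an ordinal of $M$, so in fact $\pi\in M$, $M[\tilde\ell']=M[\tilde\ell]$ outright, and the closure transfer via $\sigma_\pi$ is immediate.
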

\begin{proof}
Let $p=\{\langle \kappa,\lambda\rangle\}\in \Add(j(\kappa),1)^M$. Since the Cohen forcing is homogeneous, one can find a $M$-generic filter $H$ for the forcing $\Add(j(\kappa),1)^M$ such that $p\in H$, $\bigcup H \restriction \kappa = \tilde{\ell}\restriction \kappa$ and $M[\tilde{\ell}]=M[H]$. By the elementarity of $\tilde{j}$, $\tilde{\ell}\restriction \kappa = \tilde{j}(\ell)\restriction \kappa = \ell$. Let $\tilde{\ell}' = \bigcup H$. By Silver's argument, $j$ extends to an elementary embedding $\tilde{j}' \colon V[\ell] \to M[\tilde{\ell}'] = M[\tilde{\ell}]$. If $\tilde{j}$ was a $\lambda$-\Cn-supercompact embedding then so is $\tilde{j}'$ since $M[\tilde{\ell}] = M[\tilde{\ell}']$ and $\tilde{j}(\kappa) = \tilde{j}'(\kappa) = j(\kappa)$. 
\end{proof}

All the issues described so far can be framed within the setting of preservation of \Cn-supercompactness by forcing. Broadly speaking, the main obstacle for developing a general theory of preservation for \Cn-supercompact cardinals is the disagreement between the strong correctness of $j(\kappa)$ and the little resemblance between $M$ and the universe. More precisely \Cn-supercompact embeddings may not be superstrong and thus this opens the door to have target models $M$ that are not more correct than $\Sigma_2$-correct (i.e $M\prec_2 V$) regardless $j(\kappa)\in C^{(n)}$. At Section 5 we will cover this problematic with all details.

%in the general problem of preservation of \Cn-supercompact cardinals under standard forcing notions. For instance, we do not know even if $\Add(\kappa,1)$ preserves \Cn-supercompactness of $\kappa$ and, of course, we know a little about the sort of forcings which preserves it.%We shall show in this section that \textit{Magidor products} (see Definition \ref{MagProd}) is an exception we know to our aforementioned ignorance on preservation of \Cn-supercompact cardinals.

%Another interesting questions is, in analogy with supercompactness, whether one can force \Cn-supercompact cardinals to be indestructible with respect to some wide class of forcing notions using some forcing preparation. A positive answer to this question will lead to a much better understanding of \Cn-supercompactness. Actually, by virtue of the main result from \cite{BagHam}, this will show that \Cn-supercompact cardinals are not $C^{(3)}$-cardinals and hence not extendible. 
\subsection{Magidor Product}
Henceforth we will assume that $n\geq 1$, $\kappa$ is a \Cn-supercompact cardinal and $\ell:\kappa\rightarrow\kappa$ is a $\mathfrak{S}^{(n)}$-fast function with $\ran (\ell)=\langle \kappa_\alpha:\, \alpha<\kappa\rangle $ a set of measurable cardinals which does not contain their limit points; namely, for every $\alpha<\kappa$, $\sup_{\beta<\alpha} \kappa_\beta<\kappa_\alpha$. 
\begin{defi}[\textbf{Magidor product}]\label{MagProd}
Let $\kappa$ be a regular cardinal and $A=\langle \kappa_\alpha:\,\alpha<\kappa\rangle$ be a subset of measurable cardinals below $\kappa$ which does not contain their limit points. Set $U_\alpha$ be a normal measure on $\kappa_\alpha$, each $\alpha<\kappa$. The $\kappa$-Magidor product with respect to $A$, $\mathbb{M}_{A,\kappa}$,  is the set of all sequences $p=\langle\langle s(\alpha), A_\alpha\rangle:\, \alpha<\kappa \rangle$ such that
\begin{itemize}
\item[(a)] For every $\alpha<\kappa$, $(s(\alpha), A_\alpha)\in\mathbb{P}_{U_\alpha}$, where $\mathbb{P}_{U_\alpha}$ stands for the Prikry forcing with respect to the normal measure $U_\alpha$.
\item[(b)] $\{\alpha<\kappa: \, s(\alpha)\neq \emptyset\}\in [\kappa]^{<\aleph_0}$.
\end{itemize} 
Given two conditions $p,q\in\mathbb{M}_{A,\kappa}$, $p\leq q$ ($p$ is stronger than $q$) if for every $\alpha<\kappa$, $p(\alpha)\leq_{\mathbb{P}_{U_\alpha}}q(\alpha)$. We will also say that $p$ is a direct extension of $q$, $p\leq^\star q$ if for every $\alpha<\kappa$, $p(\alpha)\leq^\star_{\mathbb{P}_{U_\alpha}}q(\alpha)$
\end{defi}
It is illustrative to think on $\mathbb{M}_{A,\kappa}$ as a particular case of a Magidor iteration of Prikry forcings as presented in definition 6.1 of \cite{GitPrikry}. Specifically, provided that $A$ does not contain their limit points, one can easily check that $\mathbb{M}_{A,\kappa}$ is isomorphic to the Magidor iteration of Prikry forcings at each $\kappa_\alpha\in A$ below the condition $\langle \langle \emptyset, \kappa_\alpha\rangle:\,\alpha\in\kappa\rangle$.

On the sequel we shall adopt the notation $\mathbb{M}$ instead of the cumbersome $\mathbb{M}_{\ran (\ell), \kappa}$ as long as the set $A$ and the cardinal $\kappa$ are clear from the context. Our main aim along this section is to prove that $\mathbb{M}$ preserves \Cn-supercompactness of $\kappa$ lifting the corresponding ground model embeddings to \Cn-supercompact embeddings in the generic extension. As we shall argue in such generic extension the first \Cn-supercompact cardinal coincides with the first ($\omega_1$-)strongly compact cardinal.

The key point to carry out the lifting arguments is that the generics of $\mathbb{M}$ are not arbitrary objects but are essentially given by sequences of generics for the corresponding Prikry forcings. It is widely known that Mathias criteria of genericity (see e.g. \cite{GitPrikry}) implies that the critical sequence $\langle \theta_n:\,n\in\omega\rangle$ of a $\omega$-length iteration of ultrapowers with respect to some measure over $\kappa$ defines a Prikry generic $C\in V$ for $\mathbb{P}_{U_\omega}$ over $M_\omega$\footnote{Here $U_\omega$ is the measure over $\kappa_\omega=\sup_n\kappa_n$ generated by the family of sets $\{A_n:n\in\omega\}$, where $A_n=\{\kappa_m:m<n\}$. }
Therefore, combining both things, iterated ultrapowers seems to provide a standard tool to define generic filters for $\mathbb{M}$ and thus it turns to be necessary to prove a similar version to the Mathias criteria for $\mathbb{M}$. In the next section we shall prove that $\mathbb{M}$ enjoys certain property also satisfied by the Prikry forcing  that constitutes the main ingredient for the proof of Mathias criteria of generecity. We have called this property \textit{Mathias-Prikry property}:
\begin{lemma}
Let $\mathbb{P}$ be the Prikry forcing with respect to some normal measure $U$. Then $\mathbb{P}$ enjoys the Mathias-Prikry property; namely, for every condition $\langle s,A\rangle\in\mathbb{P}$ and every dense open set $D\subseteq\mathbb{P}$ there are $n_s\in\omega$ and $A\in U$ such that for every $m\geq n_p$ and every $t\in[A]^m$, $\langle s\frown t, A\setminus\max(t)+1\rangle\in D$.
\end{lemma}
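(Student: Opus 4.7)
The plan is to deduce this Mathias--Prikry property from the classical \textbf{Prikry property} (every condition has a direct extension deciding any given statement in the forcing language) together with the normal-measure combinatorics of $U$, in effect recovering the usual \emph{strong Prikry property} for $\mathbb{P}$. The conclusion for all $m\geq n_s$ will then come for free from openness of $D$: once we secure that every $n_s$-step extension from a suitable large set lies in $D$, any longer tail simply refines such a condition and hence stays in the open set $D$.

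The core combinatorial step proceeds by a coloring. For each $n<\omega$, define $c_n\colon [A]^n\to 2$ by setting $c_n(t)=1$ iff there exists $B\in U$ with $B\subseteq A\setminus(\max t+1)$ such that $\langle s\frown t,B\rangle\in D$. By Rowbottom's theorem for the normal measure $U$ (applied to each $c_n$ and then diagonalised over $n\in\omega$ using $\kappa$-completeness) I obtain a single set $A^{\ast}\in U$, $A^{\ast}\subseteq A$, on which every $c_n$ is homogeneous. Since $D$ is dense below $\langle s,A^{\ast}\rangle$, there must be some condition $\langle s\frown t,B\rangle\leq\langle s,A^{\ast}\rangle$ in $D$ with $t\in[A^{\ast}]^n$ for some $n$; replacing $B$ by $B\cap A^{\ast}$ witnesses $c_n(t)=1$, so by homogeneity $c_n\equiv 1$ on $[A^{\ast}]^n$. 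Fix this $n$ and call it $n_s$.

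To obtain a \emph{uniform} large set $A^{\star}$ that works for all $t\in[A^{\ast}]^{n_s}$ simultaneously, I form a diagonal intersection of witnesses. For each $t\in[A^{\ast}]^{n_s}$ choose $B_t\in U$ with $B_t\subseteq A^{\ast}\setminus(\max t+1)$ and $\langle s\frown t,B_t\rangle\in D$, and set
\[
A^{\star}=\bigl\{\nu\in A^{\ast} : \nu\in B_t \text{ for every } t\in [A^{\ast}\cap\nu]^{n_s}\bigr\}.
\]
By $\kappa$-completeness the inner intersection at each $\nu$ (of size at most $|\nu|^{n_s}<\kappa$) lies in $U$, and then normality of $U$ yields $A^{\star}\in U$. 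For any $t\in[A^{\star}]^{n_s}$, by construction $A^{\star}\setminus(\max t+1)\subseteq B_t$, so $\langle s\frown t,A^{\star}\setminus(\max t+1)\rangle$ refines the condition $\langle s\frown t,B_t\rangle\in D$ and thus belongs to $D$. For $m\geq n_s$ and $t\in [A^{\star}]^m$, split $t$ as $t_0\cup t_1$ with $|t_0|=n_s$; then $\langle s\frown t,A^{\star}\setminus(\max t+1)\rangle\leq\langle s\frown t_0,A^{\star}\setminus(\max t_0+1)\rangle\in D$, so openness of $D$ delivers the lemma with this $n_s$ and $A^{\star}$.

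The main obstacle is packaging the family $\{B_t : t\in[A^{\ast}]^{n_s}\}$ into a single set $A^{\star}\in U$: because there are $\kappa$-many such $t$, a straight intersection fails and one must arrange the indexing so that $\max t<\nu$ at every stage of the diagonalisation, which is exactly what normality of $U$ is designed for. Everything else is routine: the Prikry property is used implicitly in the Rowbottom step (to justify reducing from ``some condition in $D$ below $\langle s\frown t,\cdot\rangle$'' to ``some direct extension in $D$''), and openness of $D$ absorbs the tail beyond length $n_s$.
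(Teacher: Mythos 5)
Your argument is correct and is essentially the standard proof of the strong Prikry (Mathias) property for Prikry forcing: the paper does not prove this lemma itself but cites Lemma 1.13 of Gitik's Handbook chapter, whose proof is the same Rowbottom-homogeneity-plus-diagonal-intersection argument you give, with openness of $D$ absorbing the tails of length $m>n_s$. One small remark: the Prikry property is not actually needed anywhere --- density below $\langle s,A^{\ast}\rangle$ already produces a condition with stem $s\frown t$ and a measure-one set, which is exactly a witness for $c_n(t)=1$ --- so your closing caveat about using it ``implicitly'' can simply be dropped.
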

\begin{proof}
See (see lemma 1.13 of \cite{GitPrikry}). 
\end{proof}
Once we prove that $\mathbb{M}$ enjoys the Mathias-Prikry property the sketch for the construction of the generics will be the following. Let  $j: V\rightarrow M$ be a $\lambda$-\Cn-supercompact embedding, $A^\star=\langle {\kappa}_\alpha: \lambda<\alpha<j(\kappa)\rangle$ be a family of $M$-measurable cardinals not containing their limit points and $U^\star = \langle \tilde{U}_\alpha : \lambda < \alpha < j(\kappa)\rangle$ be a sequence of measures over ${\kappa}_{\alpha}$. Define over $M$ a $\omega\cdot j(\kappa)$-iteration of ultrapowers $\langle M_\alpha, j_{\alpha, \beta} \mid \alpha \leq \beta \leq \omega\cdot \mu\rangle$ where each ${\kappa}_\alpha$ is iterated $\omega$-many times. By previous comments this iteration yields to a family of ($M$-definable)  generic filters $\langle H_\alpha:\,\lambda<\alpha<j(\kappa)\rangle$ for each Prikry forcing $\mathbb{P}_{U_{\omega\cdot\alpha}}$ which defines -here is where the Mathias-Prikry property comes into play- a $\mathbb{M}^{M_{\omega\cdot \mu}}$-generic filter over $M_{\omega\cdot\mu}$. We will finally show that the embedding $j_{0,\omega\cdot} \circ j$ lifts to a $\lambda$-\Cn-supercompact embedding in $V^\mathbb{M}$ thus proving that $\kappa$ remains \Cn-supercompact in the generic extension.

\subsection{$\mathbb{M}$ and the Mathias-Prikry property}
\begin{defi}
A function $s\in \prod_{\alpha\in\kappa} \kappa_\alpha^{<\omega}$ is a \emph{stem} if $s(\alpha)$ is a strictly increasing sequence of cardinals and $\{\alpha<\kappa:\, s(\alpha) \neq \emptyset\}\in [\kappa]^{<\aleph_0}$.
Let $St$ be the set of all stems. For $s\in St$, we let the \textit{support of $s$}, $\supp s$,  be an increasing enumeration $\langle\alpha_i:i\leq n\rangle$ of the non trivial coordinates of $s$. The length sequence of a stem $s$ is $\len s=\langle \len s(\alpha):\,\alpha<\kappa\rangle$.
\end{defi}
Notice that a length sequence $\len s$ completely determines $\supp s$. Thus, all the relevant information (i.e.\ the support and the lengths of the corresponding sequences) about a stem $s$ is encoded within $\len s$. Let $ \bigoplus_{\alpha < \kappa} \omega$ denote the set of all $\kappa$-sequences of natural numbers which are non-zero only in a finite set. Let $\vec{\gamma}\in \bigoplus_{\alpha < \kappa} \omega$, we will set $\vec{\gamma}_{\neq 0}=\{\alpha\in\kappa:\,\vec{\gamma}(\alpha)\neq 0\}$. If $\vec{\gamma},\vec{\gamma}'\in \bigoplus_{\alpha < \kappa} \omega$ we will write $\vec{\gamma}\leq_p \vec{\gamma}'$ if for every $\alpha<\kappa$, $\vec{\gamma}(\alpha)\leq \vec{\gamma}'(\alpha)$.
\begin{lemma}[Finite Diagonal Intersection]\label{lemma: finite diagonal intersection}
Let $\vec{\gamma}$ be a length sequence and $\langle B^s_\alpha \mid \alpha < \kappa, s\in St, \len s = \vec{\gamma}\rangle$, $B^s_\alpha\in U_\alpha$ with $\min B^s_\alpha \geq \max s(\alpha)$. There is a sequence of large sets $\langle C_\alpha \mid \alpha < \kappa\rangle$ such that for every stem $s \in \prod_{\alpha\in\kappa} C_\alpha^{<\omega}$, $\len s=\vec{\gamma}$, $B_\alpha^s \supseteq C_\alpha \setminus \max s(\alpha)$ for all $\alpha$. 
\end{lemma}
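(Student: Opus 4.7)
The plan is to define $C_\alpha \in U_\alpha$ by cases, determined by the position of $\alpha$ relative to the finite support $\sigma := \vec\gamma_{\neq 0} = \{\alpha_0 < \alpha_1 < \cdots < \alpha_n\}$. First I note that every stem $s$ with $\len s = \vec\gamma$ is encoded by a tuple $(s(\alpha_0), \ldots, s(\alpha_n)) \in \prod_{i \le n}[\kappa_{\alpha_i}]^{\vec\gamma(\alpha_i)}$, and since each $\kappa_{\alpha_i}$ is measurable and hence a strong limit, the total number of such stems is at most $\kappa_{\alpha_n}$. This count is the yardstick against which the measure $U_\alpha$ at each coordinate will be compared.

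For $\alpha > \alpha_n$ the no-limit-points hypothesis on $A$ gives $\kappa_\alpha > \kappa_{\alpha_n}$, so I can take $C_\alpha := \bigcap_{s:\,\len s = \vec\gamma} B^s_\alpha$ directly: this is a $<\kappa_\alpha$-sized intersection of members of $U_\alpha$ and hence lies in $U_\alpha$ by $\kappa_\alpha$-completeness. For $\alpha = \alpha_k \in \sigma$ I call upon the normality of $U_{\alpha_k}$: for each $\xi < \kappa_{\alpha_k}$ set
$$A^{(k)}_\xi := \bigcap_{\substack{s:\,\len s = \vec\gamma \\ \max s(\alpha_k) < \xi}} B^s_{\alpha_k}$$
and then $C_{\alpha_k} := \Delta_{\xi < \kappa_{\alpha_k}} A^{(k)}_\xi$. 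Granted that every $A^{(k)}_\xi$ lies in $U_{\alpha_k}$, normality gives $C_{\alpha_k} \in U_{\alpha_k}$ and the required inclusion $B^s_{\alpha_k} \supseteq C_{\alpha_k} \setminus \max s(\alpha_k)$ follows immediately: for any $\xi \in C_{\alpha_k}$ with $\xi > \max s(\alpha_k)$, the diagonal property yields $\xi \in A^{(k)}_\xi \subseteq B^s_{\alpha_k}$. Coordinates $\alpha < \alpha_n$ with $\alpha \notin \sigma$ are treated by the same kind of diagonal-intersection argument.

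The hard part is the verification that every $A^{(k)}_\xi$ really belongs to $U_{\alpha_k}$. A naive count of the stems $s$ with $\max s(\alpha_k) < \xi$ and remaining coordinates ranging over $\prod_{j \ne k}[\kappa_{\alpha_j}]^{\vec\gamma(\alpha_j)}$ gives $|\xi|^{<\omega} \cdot \kappa_{\alpha_n} = \kappa_{\alpha_n}$, which exceeds $\kappa_{\alpha_k}$ as soon as $k < n$. My resolution is to build the $C_{\alpha_k}$'s in descending order $k = n, n-1, \ldots, 0$ and, at stage $k$, to restrict the $j > k$ coordinates of the stems being intersected to \emph{bounded} initial segments of $C_{\alpha_j}$ of cardinality strictly below $\kappa_{\alpha_k}$ (legitimate because $\kappa_{\alpha_j}$ is a strong limit). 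The delicate point is to tie these bounds to $\xi$ through the diagonal intersection so that the final conclusion of the lemma really does hold for \emph{every} stem $s \in \prod_\alpha C_\alpha^{<\omega}$, not merely for those respecting the auxiliary bounds; the finite-support structure of $\vec\gamma$, together with the fact that each $\kappa_{\alpha_j}$ is a strong limit, are what make this bookkeeping work.
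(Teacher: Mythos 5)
Your setup is sound up to and including the top support coordinate: the count of stems, the treatment of $\alpha>\alpha_n$ by plain intersection, and the diagonal intersection at $\alpha=\alpha_n$ (where the stems with $\max s(\alpha_n)<\xi$ really do number fewer than $\kappa_{\alpha_n}$) all work and agree with the paper. The gap is exactly at the point you flag as ``delicate,'' and it is not a bookkeeping issue --- the proposed fix cannot work. For $k<n$ (and likewise for $\alpha<\alpha_n$ with $\alpha\notin\sigma$, where there is no coordinate of $s$ to diagonalize against at all), the set $B^s_{\alpha_k}$ depends on the coordinates $s(\alpha_j)$ for $j>k$, and in the final conclusion these range over arbitrary finite subsets of $C_{\alpha_j}$, an unbounded measure-one subset of $\kappa_{\alpha_j}>\kappa_{\alpha_k}$. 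Any auxiliary bound you tie to $\xi$ lives below $\kappa_{\alpha_k}$, hence is bounded in $\kappa_{\alpha_j}$; so for each fixed $\xi\in C_{\alpha_k}$ there remain $\kappa_{\alpha_n}$-many stems with $\max s(\alpha_k)<\xi$ whose upper coordinates escape every such bound, and the construction says nothing about whether $\xi\in B^s_{\alpha_k}$ for those $s$. Restricting to bounded initial segments only changes which stems are covered; it does not shrink the genuinely $\kappa_{\alpha_n}$-sized family of constraints at coordinate $\alpha_k$ to something $U_{\alpha_k}$-completeness or normality can absorb.

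The missing idea is to \emph{kill} the dependence on the upper coordinates rather than to bound them. The paper argues by induction on $|\vec{\gamma}_{\neq 0}|$: writing $\delta=\max\vec{\gamma}_{\neq 0}$ and each stem as $s=s'\frown\vec{\eta}$ with $\vec{\eta}\in\kappa_\delta^{\vec{\gamma}(\delta)}$, it applies a R\"{o}wbottom-style homogenization to the map $\vec{\eta}\mapsto\langle B^{s'\frown\vec{\eta}}_\alpha:\alpha<\delta,\ s'\rangle$. Because the measurables are discrete, the number of possible values of this map is below $\kappa_\delta$, so there is $A_\delta\in U_\delta$ on which the lower-coordinate families do not depend on $\vec{\eta}$ at all; the induction hypothesis then applies below $\delta$ to the common value, and a single diagonal intersection intersected with $A_\delta$ handles coordinate $\delta$ itself. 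Your descending recursion could be repaired along the same lines, but only by inserting this homogenization at the coordinates $\alpha_j$ with $j>k$ before defining $C_{\alpha_k}$; without it the cardinality obstruction you correctly identified remains.
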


\begin{proof}
Let us show that the theorem holds by induction over the amount of non-zero coordinates of $\vec{\gamma}$, $|\vec{\gamma}_{\neq 0}|$. Suppose that $|\vec{\gamma}_{\neq 0}| =0$, then there is only one stem with this support (namely the 0 function). Thus defining $C_\alpha=B^s_\alpha$, we are done. Now suppose by induction that for every length sequence $\vec{\gamma}'$ with $|\vec{\gamma}'_{\neq 0}|\leq n$, for every ordinal $\delta$ and every family of large sets $\langle B^s_\alpha:\alpha<\delta,\, s\in St,\, \supp s=\vec{\gamma}'\rangle$ there is $\langle C_\alpha:\,\alpha<\delta\rangle$ witnessing the theorem. 

Let $\vec{\gamma}$ be a length sequence with $|\vec{\gamma}_{\neq 0}|=n+1$ and $\langle B^s_\alpha:\alpha<\kappa,\, s\in St,\, \len s=\vec{\gamma}\rangle$ be a family of large sets. Say that $\max(\vec{\gamma}_{\neq 0})=\delta$. Notice that there are at most $\kappa_\delta$-many stems with such support and thus for every $\delta<\alpha$ the set $C_\alpha=\bigcap_{s\in St,\len s=\vec{\gamma}} B^s_\alpha$ is an element of $U_\alpha$. Let us work now with the truncated family $\langle B^s_\alpha:\alpha< \delta,\, s\in St,\, \len s=\vec{\gamma}\rangle$. All the stems with length sequence $\len s=\vec{\gamma}$ are built by some $s'\in St$ with $\len s'=\vec{\gamma}\upharpoonright n$ and some $\vec{\eta}\in \kappa_{\delta}^{\vec{\gamma}(\delta)}$. Namely, $s=s'\frown \vec{\eta}$.\footnote{This denotes the stem $s$ which is equal to $s'$ on all coordinates except in $\delta$, in which is  equal to the sequence $\vec{\eta}$} For each possible extension $\vec{\eta}$, one has a family $\mathcal{B}_{\vec{\eta}}= \langle B^{s'}_\alpha(\vec{\eta}):\alpha< \delta,\, s'\in St,\, \len s'=\vec{\gamma}\upharpoonright n\rangle$ of large sets.  By the discreteness of the measurables, there is a large set $A_{\delta}\in U_{\delta}$ such that the families $\mathcal{B}_{\vec{\eta}}$ are the same for every $\vec{\eta}\in A_{\delta}^{\vec{\gamma}(\delta)}$. Let $\langle B^{s'}_\alpha:\alpha< \delta,\, s'\in St,\, \len s'=\vec{\gamma}\upharpoonright n\rangle$  be this common value and apply the induction hypothesis to obtain a family $\langle C_\alpha:\,\alpha<\delta\rangle$ witnessing the theorem. For the coordinate $\delta$ define $C_{\delta}=A_{\delta}\cap \bigtriangleup\{B^s_{\delta}: s\in St,\,\len s=\vec{\gamma}\}$ where $\bigtriangleup\{B^s_{\delta}: s\in St,\,\len s=\vec{\gamma}\}$ is defined as:
$$\{\beta\in \kappa_{\delta}:\,(s\in St\,\wedge\,\len s=\vec{\gamma}\,\wedge\, \max(s(\delta))<\beta)\rightarrow\beta\in B^s_{\delta}\}.$$
It is routine to check that the family $\langle C_\alpha:\,\alpha<\kappa\rangle$ witnesses the theorem for the support $\vec{\gamma}$.
\end{proof}
\begin{lemma}[R\"{o}wbottom Lemma]\label{Rowbottom}
Let $f\colon St \to 2$ be a function. There is a sequence of large sets  $\langle C_\alpha \mid \alpha < \kappa\rangle$ and a function $g\colon \bigoplus_{\alpha < \kappa}\omega\rightarrow 2  $  such that for every stem $s\in \prod_{\alpha\in\kappa} C_\alpha^{<\omega}$, $f(s) = g(\supp s)$.  
\end{lemma}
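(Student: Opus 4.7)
My plan is to prove the R\"{o}wbottom Lemma in two stages: first homogenize $f$ separately for each length sequence, then amalgamate the resulting families of large sets across all length sequences via a diagonal argument. (I read $g(\supp s)$ in the statement as $g(\len s)$, since the domain of $g$ is $\bigoplus_{\alpha<\kappa}\omega$, i.e.\ the set of length sequences.)

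\textbf{Stage 1: homogenisation for a fixed $\vec{\gamma}$.} Fix $\vec{\gamma}\in\bigoplus_{\alpha<\kappa}\omega$ with $\supp\vec{\gamma}=\{\alpha_1<\cdots<\alpha_m\}$ and $n_i:=\vec{\gamma}(\alpha_i)$. Processing coordinates from the top down, for each tuple $(t_1,\dots,t_{m-1})\in\prod_{i<m}[\kappa_{\alpha_i}]^{n_i}$ the ordinary R\"owbottom property of the normal measure $U_{\alpha_m}$ applied to the partition $t_m\mapsto f(\langle t_1,\dots,t_m\rangle)$ of $[\kappa_{\alpha_m}]^{n_m}$ yields a homogeneous set in $U_{\alpha_m}$. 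Since the number of such tuples is at most $(\sup_{i<m}\kappa_{\alpha_i})^{<\omega}<\kappa_{\alpha_m}$ by the discreteness of $\ran\ell$, $\kappa_{\alpha_m}$-completeness lets me intersect them into a single $B^{\vec{\gamma}}_{\alpha_m}\in U_{\alpha_m}$ and extract a reduced coloring on the lower coordinates. Iterating down to $\alpha_1$ produces $B^{\vec{\gamma}}_{\alpha_i}\in U_{\alpha_i}$ and a value $g(\vec{\gamma})\in 2$ such that $f(s)=g(\vec{\gamma})$ whenever $\len s=\vec{\gamma}$ and $s(\alpha_i)\in[B^{\vec{\gamma}}_{\alpha_i}]^{n_i}$ for each $i$. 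I extend by $B^{\vec{\gamma}}_\alpha:=\kappa_\alpha$ for $\alpha\notin\supp\vec{\gamma}$.

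\textbf{Stage 2: diagonal amalgamation.} The task now is to find $\langle C_\alpha:\alpha<\kappa\rangle$ with $C_\alpha\in U_\alpha$ such that, for every stem $s\in\prod_\alpha C_\alpha^{<\omega}$ and every $\alpha\in\supp s$, one has $s(\alpha)\subseteq B^{\len s}_\alpha$. Setting $B^s_\alpha:=B^{\len s}_\alpha\setminus(\max s(\alpha)+1)$, this is a statement in the same spirit as Lemma \ref{lemma: finite diagonal intersection}, but with the length sequence allowed to vary with $s$. I will establish it by replaying the induction on $|\vec{\gamma}_{\neq 0}|$ that drives Lemma \ref{lemma: finite diagonal intersection}, organised by $\delta:=\max\supp(\len s)$: at the inductive step the discreteness of $\ran\ell$ collapses the families $\mathcal{B}_{\vec\eta}$ indexed by $\vec\eta\in[\kappa_\delta]^{\vec{\gamma}(\delta)}$ onto a single value on some set in $U_\delta$, reducing the problem to a strictly shorter support. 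Once $\langle C_\alpha\rangle$ has been built, any $s\in\prod_\alpha C_\alpha^{<\omega}$ satisfies $s(\alpha)\subseteq B^{\len s}_\alpha$ for every $\alpha\in\supp s$, and hence $f(s)=g(\len s)$ by Stage 1.

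\textbf{Main obstacle.} The principal difficulty is a cardinality mismatch: there are $\kappa$-many length sequences $\vec{\gamma}$ contributing a constraint $B^{\vec{\gamma}}_\alpha$ at coordinate $\alpha$, but $U_\alpha$ is only $\kappa_\alpha$-complete with $\kappa_\alpha<\kappa$, so one cannot simply intersect them over all $\vec{\gamma}$ with $\alpha\in\supp\vec{\gamma}$. The remedy is the diagonal intersection of Stage 2 combined with the discreteness-of-measurables trick that powers Lemma \ref{lemma: finite diagonal intersection}; the bulk of the technical effort lies in making that inductive argument go through uniformly as the length sequence varies with the stem, rather than being held fixed throughout.
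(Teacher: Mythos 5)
Your Stage 1 is correct, and it is essentially the paper's homogenisation step reorganised per length sequence instead of per coordinate with the counters $f_n$: fix $\vec{\gamma}$, apply R\"owbottom at the top measurable of $\supp\vec{\gamma}$ for each of the fewer than $\kappa_{\alpha_m}$ many lower parts (discreteness), intersect, and descend. Your reading of $g(\supp s)$ as $g(\len s)$ also matches how the lemma is actually invoked in Lemma \ref{StrongPrikry}.

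The problem is Stage 2, which is precisely the step you defer to ``the same induction as Lemma \ref{lemma: finite diagonal intersection}.'' That induction does not transfer. In Lemma \ref{lemma: finite diagonal intersection} the length sequence is \emph{fixed}, so the only interference a coordinate $\alpha$ receives from above comes from the finitely many coordinates of $\supp\vec{\gamma}$ above $\alpha$, and it is a dependence on the \emph{values} chosen at a single measurable $\kappa_\delta$ --- exactly what one application of R\"owbottom at $U_\delta$ collapses. In your Stage 2 the interference at $\alpha$ comes from the \emph{position and shape} of the upper support, which ranges over $\kappa$-many length sequences; no single measure $U_\delta$ sees them all, and $U_\alpha$ is only $\kappa_\alpha$-complete with $\kappa_\alpha<\kappa$. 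Your own requirement ``$s(\alpha)\subseteq B^{\len s}_\alpha$ for all $s\in\prod C_\beta^{<\omega}$'' forces $C_\alpha\subseteq\bigcap\{B^{\vec{\gamma}}_\alpha:\vec{\gamma}(\alpha)\geq 1\}$, an intersection of $\kappa$-many $U_\alpha$-sets, which can be empty. Indeed the statement you are trying to prove in Stage 2 (and the lemma itself, for arbitrary $f$) is false: fix a surjection $h\colon\kappa\to\kappa_0$ with every fibre cofinal in $\kappa$, and let $f(s)=1$ iff $\supp s=\{0,\beta\}$ with $\len s(0)=\len s(\beta)=1$ and $s(0)(0)\neq h(\beta)$, and $f(s)=0$ otherwise. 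Given any candidate $C_0\in U_0$, pick $\xi\neq\xi'$ in $C_0$, choose $\beta$ with $h(\beta)=\xi$, and any $\eta\in C_\beta$; the two stems $(\langle\xi\rangle,\langle\eta\rangle)$ and $(\langle\xi'\rangle,\langle\eta\rangle)$ supported on $\{0,\beta\}$ lie in $\prod C_\alpha^{<\omega}$, have the same support and length sequence, but receive different $f$-values. So no $g$ depending only on $\len s$ (or on $\supp s$) can exist.

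For what it is worth, the paper's own proof does not close this gap either: its homogenisation at coordinate $\alpha$ only ever treats stems with $\max(\supp s)=\alpha$, so the dependence of $f$ on a \emph{lower} coordinate of a stem whose support reaches higher is never homogenised, and the final induction on the number of non-zeroed coordinates tacitly assumes it has been. Any correct version must either settle for homogeneous sets $C^{\vec{\gamma}}_\alpha$ depending on the length sequence (your Stage 1, which is the true content here) or exploit extra structure of the particular $f$ arising in Lemma \ref{StrongPrikry} --- namely the monotonicity coming from $D$ being open --- rather than prove the lemma for arbitrary $f\colon St\to 2$. As written, Stage 2 is not a technical detail to be replayed but the point where the argument genuinely breaks.
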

\begin{proof}
Fix $\alpha\in \kappa$ an let $St_\alpha=\{s\in St:\, \max(\supp s)=\alpha\}$. We are going to define by induction over $n\in\omega$ a sequence of functions $f_n\upharpoonright\alpha: St_\alpha \rightarrow 2$ and a sequence of $U_\alpha$-large sets $\langle A_{\alpha,n}:\,n\in\omega\rangle$. Let $f_0=f$ and $A_{\alpha,0}=\kappa_\alpha\setminus\{0\}$ and let us show how to proceed on larger $n$'s. Denote by $St_{\alpha,n}$ the set of all stems such that $\alpha=\max(\supp s)$ and $s(\alpha)\in A_{\alpha, n}^{<\omega}$. For each $s\in St_{\alpha,n}$, consider $F^n_{\alpha,s\upharpoonright\alpha}: \kappa_\alpha^{<\omega}\rightarrow 2$ defined by $\vec{\eta}\mapsto f_n(s^\alpha_{\vec{\eta}})$. Here $s^\alpha_{\vec{\eta}}$ stands for the stem $s^*$ which coincides with $s$ in all the coordinates except in $\alpha$ where it is $\vec{\eta}$. By R\"owbottom theorem one can find a homogeneous set $H_{s\upharpoonright\alpha,\alpha}\subseteq A_{\alpha, n}$ for this function. Define $A_{\alpha, n+1}=\bigcap\{H_{s\upharpoonright\alpha,\alpha}:\, s\in St_{\alpha,n}\}$ and notice that this is a $U_\alpha$-large set because this intersection runs for less than $\kappa_\alpha$-many sets. For each $s\in St_{\alpha,n}$, with $s(\alpha)\in A_{\alpha, n+1}^{<\omega}$, define $f_{n+1}(s^\alpha_{\langle 0\rangle^{s(\alpha)}})=f_n(s)$. Here $\langle 0\rangle^{s(\alpha)}$ stands for the sequence of length $\len s(\alpha)$ of $0$'s\footnote{By convention, $\langle 0\rangle^0=\emptyset$.}. This finishes the induction over $n$.

Repeating the above argument for each $\alpha<\kappa$, one gets a sequence of large sets $\langle A_{\alpha, n}:\,\alpha<\kappa, n\in\omega\rangle$ and a sequence of functions $\langle f_n:\,n\in \omega\rangle$. Let $C_\alpha=\bigcap_{n\in\omega} A_{\alpha, n}$ and $St^\star_\alpha=\bigcap_{n\in\omega} St_{\alpha,n}$ and notice that
\begin{equation}\label{equation}
\forall\alpha\in \kappa\,\forall s\in St^\star_\alpha\,\forall n\in\omega\,(f_{n+1}(s^\alpha_{\langle 0\rangle^{s(\alpha)}})=f_n(s)).
\end{equation}
For every $m\in\omega$, we will prove by induction that for every stem $s\in \prod_{\alpha\in\kappa} (C_\alpha\cup \{0\})^{<\omega}$, $f_m(s)=f_{m+n}(r)$, where $n=|\{\alpha:\, s(\alpha)\neq \langle 0\rangle^{s(\alpha)}\}|$ and $r$ is such that $r(\alpha)=\langle 0\rangle^{s(\alpha)}$ if $\alpha\in \supp s$ and $r(\alpha)=\emptyset$, otherwise. The induction runs over this $n$'s.

Let $m\in\omega$ be fixed. Let us prove for the sake of clarity the first two inductive steps. If $s$ is a stem such that $|\{\alpha:\, s(\alpha)\neq \langle 0\rangle^{s(\alpha)}\}|=0$ then the claim is true since $r=s$. On the other hand, if $\{\alpha:\, s(\alpha)\neq \langle 0\rangle^{s(\alpha)}\}=\{\beta\}$, then $s\in St^\star_{\beta}$ and thus $f_m(s)=f_{m+1}(s^\beta_{\langle 0\rangle^{s(\beta)}})$ by the equation \eqref{equation}. Notice that $s^\beta_{\langle 0\rangle^{s(\beta)}}=r$, and we are done.

Suppose that the claim is true for stems $s$ such that $|\{\alpha:\, s(\alpha)\neq \langle 0\rangle^{s(\alpha)}\}|=n$. Let $s\in \prod_{\alpha\in\kappa} (C_\alpha\cup \{0\})^{<\omega}$ with $|\{\alpha:\, s(\alpha)\neq \langle 0\rangle^{s(\alpha)}\}|=n+1$ such that $s\in St^\star_\alpha$, for some ordinal $\alpha$. By equation \ref{equation}, $f_m(s)=f_{m+1}(s^\alpha_{\langle0\rangle^{s(\alpha)}})$. Now $s^*=s^\alpha_{\langle 0\rangle^{s(\alpha)}}$ is such that $|\{\beta:\, s(\beta)\neq \langle 0\rangle^{s^*(\beta)}\}|=n$ so by induction  we know that $f_{m+1}(s^\alpha_{\langle0\rangle^{s(\alpha)}})=f_{m+n+1} (r)$, where $r(\beta)=\langle 0\rangle^{s(\beta)}$ for every $\beta\in \supp s$. This shows that $f_m(s)=f_{m+n+1}(r)$. In particular, for each $s\in\prod_{\alpha\in\kappa} C_\alpha^{<\omega}$, $f(s)=f_n(r)$, where $n=| \supp s|$. Thus defining $g(\supp s)=f_{|\supp s|}(r)$, we are done.
\end{proof}
Both lemmas yields to the proof of the \emph{Mathias-Prikry Property} for $\mathbb{M}$.
\begin{lemma}[Mathias-Prikry Property]\label{StrongPrikry}
Let $D$ be a dense open subset of $\mathbb{M}$ and $p\in\mathbb{M}$. There is a direct extension $p^\star \leq^\star p$ and some $\vec{\gamma}$ which is a length sequence of a stem, such that for all $q \leq p^\star$ with stem $s_q$ and $\vec{\gamma}\leq_p \len s_q$ then $q\in D$.
\end{lemma}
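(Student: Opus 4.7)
The strategy mirrors the proof of the strong Prikry property for a single Prikry forcing, now orchestrating the two combinatorial lemmas in concert. The plan is to color stems according to whether they admit a completion in $D$, homogenize the coloring via R\"owbottom so that it depends only on the length sequence, use density of $D$ to produce one ``good'' length sequence, and finally invoke Finite Diagonal Intersection together with the openness of $D$ to absorb every further extension.

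For each ``extension stem'' $t$ (i.e.\ a stem all of whose coordinates lie strictly above $s_p$, so that $s_p\cdot t$ is a valid stem), set $f(t)=1$ iff there exist large sets $\vec B=\langle B_\alpha\rangle$ with $B_\alpha\subseteq A^p_\alpha\setminus\max s_p(\alpha)$ and $\langle\langle (s_p\cdot t)(\alpha), B_\alpha\rangle : \alpha<\kappa\rangle\in D$, and $f(t)=0$ otherwise. Applying Lemma \ref{Rowbottom} to $f$ yields a sequence of $U_\alpha$-large sets $\langle C^R_\alpha\rangle$ (which one may assume satisfy $C^R_\alpha\subseteq A^p_\alpha\setminus\max s_p(\alpha)$) together with a function $g\colon\bigoplus_{\alpha<\kappa}\omega\to 2$ such that $f(t)=g(\len t)$ whenever $t(\alpha)\subseteq C^R_\alpha$. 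By density of $D$, the direct extension $\tilde p=\langle\langle s_p(\alpha), C^R_\alpha\rangle : \alpha<\kappa\rangle$ admits some $q^\star\leq\tilde p$ in $D$; write $q^\star=\langle\langle (s_p\cdot t^\star)(\alpha), B^\star_\alpha\rangle\rangle$ and put $\vec\gamma^\star:=\len t^\star$, so $g(\vec\gamma^\star)=1$.

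For each stem $t$ with $\len t=\vec\gamma^\star$ and $t(\alpha)\subseteq C^R_\alpha$, use choice to fix a witness $\vec B^t$, shrinking further so that $B^t_\alpha\subseteq C^R_\alpha$ (which preserves the witness property by openness). Feeding the family $\langle B^t_\alpha\rangle$ into Lemma \ref{lemma: finite diagonal intersection} produces large sets $\langle C_\alpha\rangle\subseteq\langle C^R_\alpha\rangle$ with $B^t_\alpha\supseteq C_\alpha\setminus\max t(\alpha)$ for every such $t$. The claim is that $p^\star:=\langle\langle s_p(\alpha), C_\alpha\rangle : \alpha<\kappa\rangle$ and $\vec\gamma:=\len s_{q^\star}$ witness the conclusion. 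Indeed, given any $q\leq p^\star$ with $\vec\gamma\leq_p \len s_q$, decompose $s_q=s_p\cdot t_q$ and truncate $t_q$ to a stem $t$ of length exactly $\vec\gamma^\star$ by keeping only the first $\vec\gamma^\star(\alpha)$ entries of $t_q(\alpha)$ for $\alpha\in\supp\vec\gamma^\star$ (and $\emptyset$ elsewhere). Then $t(\alpha)\subseteq C_\alpha$, and the entries of $t_q$ beyond $t$ lie in $C_\alpha\setminus\max t(\alpha)\subseteq B^t_\alpha$; openness of $D$ therefore forces $\langle\langle(s_p\cdot t_q)(\alpha), B^t_\alpha\setminus\max(s_p\cdot t_q)(\alpha)\rangle\rangle\in D$. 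Finally $q$ strengthens this condition because $B^q_\alpha\subseteq C_\alpha\setminus\max s_q(\alpha)\subseteq B^t_\alpha$, and hence $q\in D$.

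The main obstacle is orchestrating R\"owbottom and Finite Diagonal Intersection coherently: the former delivers homogeneity of $f$ but ignores the witnesses, while the latter uniformizes witnesses but only for a single length sequence. These are reconciled by applying Finite Diagonal Intersection solely to the particular length sequence $\vec\gamma^\star$ handed to us by the density of $D$; this neatly sidesteps any diagonal intersection across length sequences, which would collide with the mere $\kappa_\alpha$-completeness of the measures $U_\alpha$.
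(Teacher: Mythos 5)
Your proof is correct and follows essentially the same route as the paper's: color extension stems according to whether they admit a completion in $D$, homogenize with Lemma \ref{Rowbottom} so the color depends only on the length sequence, use density of $D$ to find one good length sequence $\vec\gamma^\star$, uniformize the witnessing large sets via Lemma \ref{lemma: finite diagonal intersection}, and conclude by openness of $D$. Your write-up is in fact somewhat more explicit than the paper's about the final step (the truncation of $s_q$ and the verification that $q$ extends the uniformized witness), but the argument is the same.
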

\begin{proof}
Let $s\in St$ be the stem of $p$. Let $f_s \colon St \to 2$ be the function that sends a stem $t$ to $1$ if the concatenation of both stems $s\frown t$ is an stem and there is a sequence of large sets $\langle B^{s\frown t}:\, \alpha<\kappa\rangle$ such that the resulting condition is in $D$. Otherwise, define this value as $0$. Applying Lemma \ref{Rowbottom}, there is a sequence of large sets $\langle C_\alpha:\,\alpha< \kappa\rangle$ and a function $g\colon\bigoplus_{\alpha<\kappa}\omega\rightarrow 2$ such that for every $t\in\prod_{\alpha\in\kappa} C^{<\omega}_\alpha$, $f_s(t)=g(\supp t)$. Since $D$ is dense open it is clear that there is $t^*\in \prod_{\alpha\in\kappa} C^{<\omega}_\alpha$ such that $f_s(t^*)=1$. Thus if $\len t^*=\vec{\gamma}$ we have that $f_s(t)=1$, for every $t\in\prod C^{<\omega}_\alpha$ with $\len t=\vec{\gamma}$.
By definition, for every stem $t$ with length sequence $\vec{\gamma}$, there is a sequence of large sets $\langle B_\alpha^{s\frown t} \mid \alpha < \kappa\rangle$ such that the corresponding forcing condition lies in $D$. Apply Lemma \ref{lemma: finite diagonal intersection} to $\langle B_\alpha^{s\frown t} \mid \alpha < \kappa,\,\len t=\vec{\gamma}\rangle$ and let $\langle C^{'}_\alpha:\alpha<\kappa\rangle$ be the family of large sets witnessing it. For each $\alpha<\kappa$, define $C^*_\alpha=C^{'}_\alpha\cap C_\alpha$. Let $p^\star$ be the condition in $\mathbb{M}$ with stem $s$ and large sets $\langle C^{*}_\alpha:\,\alpha<\kappa\rangle$. If $q\leq p^\star$ and $\vec{\gamma}\leq_{p} \len s_q$, then $q$ is stronger than some condition with stem $s\frown t$ with $t\in \prod_{\alpha\in\kappa}{ (C^{*}_\alpha)}^{<\omega}$ and large sets $\langle B^{s\frown t}_\alpha:\,\alpha<\kappa\rangle$. By the above argument,  this condition is in $D$ and thus $q$ also.
\end{proof}
\subsection{Preserving \Cn-supercompactness}
Let $\lambda>\kappa$ and $j: V\rightarrow M$ be a $\lambda$-\Cn-supercompact embedding such that $j(\ell)(\kappa)>\lambda$. Recall that the existence of such embeddings are guaranteed by Lemma \ref{LemmaCohenfunction}.
\begin{lemma}\label{MainLemma}
Let $G\subseteq \mathbb{M}$ a $V$-generic filter. Then there is  an elementary embedding \linebreak$j^\star:V[G]\rightarrow M^\star[G\times H]$ witnessing the $\lambda$-\Cn-supercompactness of $\kappa$ in $V[G]$.
\end{lemma}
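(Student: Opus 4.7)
The plan is to lift $j$ via the iterated-ultrapower strategy outlined in the preamble to this subsection. Starting from a $\lambda$-\Cn-supercompact embedding $j\colon V\to M$ with $j(\ell)(\kappa)>\lambda$ (supplied by Lemma \ref{LemmaCohenfunction}), and using that $j(\ell)\upharpoonright\kappa=\ell$ together with the discreteness of $\ran(\ell)$, the poset $j(\mathbb{M})$ factors in $M$ as $\mathbb{M}\times \mathbb{M}_{\text{tail}}$, where $\mathbb{M}_{\text{tail}}$ is the Magidor product (as computed in $M$) indexed by the coordinates $\kappa\leq\alpha<j(\kappa)$. Because $j(\ell)(\kappa)>\lambda$, every measurable appearing in $\mathbb{M}_{\text{tail}}$ sits strictly above $\lambda$. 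The first factor is already taken care of by $G$; the heart of the argument is the construction of an $M$-generic filter $H$ for $\mathbb{M}_{\text{tail}}$.

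Next I would carry out, inside $M$, the $\omega\cdot j(\kappa)$-length simultaneous iteration of ultrapowers by each of the measures $\tilde U_\alpha$ indexing $\mathbb{M}_{\text{tail}}$, performing exactly $\omega$ ultrapower steps at each coordinate, obtaining an elementary embedding $i\colon M\to M^{\star}$. Since every critical point of this iteration lies strictly above $\lambda$, we have $i\upharpoonright V_\lambda^M=\mathrm{id}$ and $i$ fixes the inaccessible $j(\kappa)$; in particular $\crit(i\circ j)=\kappa$, $(i\circ j)(\kappa)=j(\kappa)$, and $i$ acts as the identity on the first factor $\mathbb{M}$. By Mathias' criterion of genericity, the $\omega$-length critical sequence produced at each coordinate $\alpha$ is Prikry-generic over $M^{\star}$ for $\mathbb{P}_{i(\tilde U_\alpha)}$. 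These coordinate-wise Prikry generics must then be glued into a single $M^{\star}$-generic $H$ for $i(\mathbb{M}_{\text{tail}})$, and this is precisely where the Mathias--Prikry property (Lemma \ref{StrongPrikry}) is essential: given a dense open $D\subseteq i(\mathbb{M}_{\text{tail}})$, that lemma furnishes a direct extension of any condition in which membership in $D$ depends only on the length sequence of the stem, while the critical sequences automatically realize every sufficiently long length sequence at every sufficiently high coordinate, so $D$ is met.

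With $G\times H$ in hand, the lifting $j^{\star}\colon V[G]\to M^{\star}[G\times H]$ is defined in the standard way, $j^{\star}(\dot x^G)=(i(j(\dot x)))^{G\times H}$. This is well-defined because every condition in $G$ has support below $\kappa$ and is therefore sent by $j$ to $(p,\mathbf{1}_{\mathbb{M}_{\text{tail}}})$ and then fixed by $i$, so $(i\circ j)''G\subseteq G\times H$. Elementarity, $\crit(j^{\star})=\kappa$ and $j^{\star}(\kappa)=j(\kappa)$ are immediate, and the property $j(\kappa)\in C^{(n)}$ persists to $V[G]$ by a L\'evy--Solovay style absoluteness argument, since $|\mathbb{M}|=\kappa<j(\kappa)$ and $j(\kappa)\in C^{(n)}$ already in $V$. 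The step I expect to be the main obstacle is the verification of $\lambda$-closure of $M^{\star}[G\times H]$ in $V[G]$: one has to combine the $\lambda$-closure of $M$ in $V$ with a careful analysis of how $i$ and the tail generic $H$ add information, exploiting that $i$ fixes $V_\lambda$ pointwise and that any $\lambda$-sequence of elements of $M^{\star}[G\times H]$ should already be nameable from $M[G]$ together with a bounded portion of the iteration and tail generic data.
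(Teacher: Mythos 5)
Your overall strategy is the one the paper follows: factor $j(\mathbb{M})$ as $\mathbb{M}\times\mathbb{M}^*$, iterate each new measure $\omega$ times inside $M$ to get $\bar{j}\colon M\to M^\star$, use the Mathias--Prikry property together with the finite diagonal intersection lemma to see that the critical sequences generate an $M^\star$-generic filter $H$ for $\bar{j}(\mathbb{M}^*)$, and lift $\bar{j}\circ j$. Two steps, however, are not right as written.

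First, a condition $p\in G$ is \emph{not} sent by $j$ to $(p,\mathbf{1}_{\mathbb{M}_{\mathrm{tail}}})$. Every coordinate of a condition in a Magidor product carries a measure-one set, so $j(p)$ restricted to the coordinates in $[\kappa,j(\kappa))$ has trivial stems but nontrivial large sets $\vec{B}=\langle B_\alpha\rangle$, and it is not fixed by $\bar{j}$ either. One still has to check that the tail condition with stems $\emptyset$ and large sets $\bar{j}(\vec{B})$ belongs to $H$, i.e.\ that each critical sequence $C_\alpha$ is contained in $\bar{j}(\vec{B})_\alpha$. This is the same computation as in the genericity argument (pushing a measure-one set through the iteration so that each $\rho^n_\alpha$ lands inside it), so it is recoverable, but the claim as you state it is false and the verification is not free.

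Second, and more seriously, the $\lambda$-closure of $M^\star[G\times H]$ --- which you correctly identify as the main obstacle --- is left essentially untouched, and the sketch you give points in the wrong direction. The difficulty is not that $\lambda$-sequences are nameable from $M[G]$ together with a \emph{bounded} portion of the iteration: an ordinal $\xi$ of $M^\star$ is represented as $\bar{j}(f)(\vec{\rho}_0,\dots,\vec{\rho}_{n-1})$ with seeds taken from critical sequences at arbitrary coordinates below $j(\kappa)$, so a $\lambda$-sequence of ordinals requires recovering $\lambda$-many finite seed-tuples spread unboundedly through the iteration, working inside $M^\star[G\times H]$. The paper's device is to define, by induction on $\gamma<j(\kappa)$, functions $p_\gamma$ with $\bar{j}(p_\gamma)(H)=\gamma$, so that ordinals below $j(\kappa)$ --- hence the indices of the seeds --- can be decoded from $H$ itself; the sequence $\langle f_\alpha:\alpha<\lambda\rangle$ lies in $M$ by $\lambda$-closure of $M$ in $V$ and is moved coordinatewise by $\bar{j}$ since $\crit(\bar{j})>\lambda$, and the seeds are then reconstructed from $H$ via the $p_\gamma$'s and the increasing enumeration of the generic sequences. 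Without some mechanism of this kind your closure argument does not go through.
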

\begin{proof}
Recall that $\mathbb{M}=\mathbb{M}_{\ran (\ell), \kappa}$ so by elementarity $j(\mathbb{M})=\mathbb{M}^M_{\ran (j(\ell)), j(\kappa)}$. It is obvious that the forcing $j(\mathbb{M})$ factorizes as $\mathbb{M}\times j(\mathbb{M})/\mathbb{M}$ where $j(\mathbb{M})/\mathbb{M}$ is the $M$-version for the magidor product $\mathbb{M}_{(\ran (j(\ell))\setminus\ran \ell), j(\kappa)}$. Since we have taken $j$ in such a way that $j(\ell)(\kappa)>\lambda$, then $\ran (j(\ell))\setminus\ran \ell)$ can be written as an increasing sequence of measurable cardinals $\langle\kappa_\alpha:\,\alpha<j(\kappa)\rangle$ such that $\kappa_0>\lambda$ and that for every $\alpha<j(\kappa)$, $\sup_{\beta<\alpha} \kappa_\beta<\kappa_\alpha$. For ease of notation set $\mu=j(\kappa)$ and $\mathbb{M}^*= j(\mathbb{M})/\mathbb{M}$. Working in $M$ we shall build an iteration of ultrapowers $\langle M_\alpha, j_{\alpha, \beta} \mid \alpha \leq \beta \leq \omega\cdot \mu\rangle$ and we will show that $\langle C_\alpha \mid \alpha < \mu\rangle$ generates a $M_{\omega\cdot\mu}$-generic for the Magidor product $j_{\omega\mu}(\mathbb{M}^*)$, where $C_\alpha = \langle \rho_{\alpha}^n \mid n < \omega\rangle$ is the $\alpha$th-critical sequence of the iteration.  Let $M_0 = M$, $j_{00}=\mathrm{id}$ and $\vec{U}=\langle U_\alpha:\,\alpha\in\mu\rangle$. For limit $\alpha$, let $M_{\alpha}$ be the direct limit of the system $\langle M_{\beta}, j_{\beta,\gamma} \mid \beta \leq \gamma < \alpha\rangle$ while for successor cases we set
 \[M_{\omega \cdot \alpha + n + 1} = \Ult(M_{\omega \cdot \alpha + n}, j_{\omega \cdot \alpha + n}(\vec{U})_\alpha)\]
, each $n\in\omega$.
Let $j_{\omega \cdot \alpha + n, \omega \cdot \alpha + n + 1}$ be the corresponding ultrapower map and define $j_{\beta, \omega \cdot \alpha + n + 1}$, for $\beta<\omega\cdot\alpha+n+1$, in the only possible way: namely, \[\rho_\alpha^n = \crit j_{\omega \cdot \alpha + n, \omega \cdot \alpha + n + 1} = j_{\omega \cdot \alpha + n}(\kappa_\alpha).\]
Notice that $\rho^0_0>\lambda$ and moreover $\kappa_\alpha=\rho^0_\alpha>\alpha$ for every $\alpha<\mu$, by discreteness of the measurables.
 By standard computations of iterated ultrapowers one can show that $\bar{j}(\mu)=\mu$.
For the ease of notation, on the sequel we will write $\bar{j} = j_{\omega\cdot \mu},\, M^\star = M_{\omega\cdot\mu}$. Consider,
$$H=\{p\in \bar{j}(\mathbb{M}^*):\, \forall \alpha\in\mu\,\forall q\in H_\alpha\;  (p(\alpha)\parallel q)\}$$
where $H_\alpha=\{\langle s, A\rangle\in\mathbb{P}_{U_\alpha}:\,s\lhd C_\alpha,\; C_\alpha\setminus\max(s)\subseteq A\ \}$; i.e. the Prikry generic defined by the critical sequence $C_\alpha$. We claim that $H$ is a generic filter for the Magidor product $\bar{j}(\mathbb{M}^\star)$ over $M^\star$.

\begin{claim}\label{ClaimGenericity}
The filter $H$ is $M^\star$ generic for $\bar{j}(\mathbb{M}^*)$.
\end{claim}
\begin{proof}[Proof of claim]
Let $D \in M$ be a dense open subset of $\bar{j}(\mathbb{M}^*)$. Then there is some function $f: \prod_{n < n^{\ast}} \kappa^{<\omega}_{\alpha_n} \rightarrow \mathcal{P}(\mathbb{M}^*)$ such that for all $\vec{\eta} \in dom f$, $f(\vec{\eta})$ is a dense open subset of $\mathbb{M}^*$ and there are sequences $\vec{\rho}_n \in C_{{\alpha_n}}^{<\omega}$ for $n<n^\star$ such that $D = \bar{j}(f)(\vec{\rho}_0,\dots, \vec{\rho}_{n_\star - 1})$. We do assume that for every $n<n^\star$ the sequences $\vec{\rho}_n$ are an initial segments of the corresponding $C_{\alpha_n}$.

Let $M' = \{\bar{j}(g)(\vec{\rho}_0,\dots, \vec{\rho}_{n_\star - 1}) \mid g \in M\}$, where we only take the $g$'s that have the right domain, namely that $\langle \vec{\rho}_n \mid n < n_{\star-1}\rangle \in \bar{j}(\dom g)$. Working in $M'$, let us apply Lemma \ref{StrongPrikry} for $D$ and a condition with stem $\vec{\rho}=\langle \vec{\rho}_n \mid n < n_{\star-1}\rangle\frown \langle\emptyset\rangle$\footnote{This means the stem $s$ such that $s(\alpha_n)=\vec{\rho}_n$ and $s(\beta)=\emptyset$, otherwise} and let $p^\star$ and $\vec{\gamma}$ be the obtained direct extension and support. It is sufficient to show that $p^\star$ belongs to the filter $H$. Indeed, in such case $H$ will meet $D^\star=\{q\leq p^\star:\,\len(q)\leq_p \vec{\gamma}\}$ which is a subset of $D$. Let $g\colon \prod \kappa^{<\omega}_{\alpha} \to M$ be a function representing the sequence of large sets in $p^\star$ where $g(s)$ is of the form $\langle B^s_\alpha \mid \alpha < \mu\rangle$ and $s$ goes over stems with length sequence $\len p^\star$. Say $\vec{B}=\langle B^s_\alpha \mid \alpha < \mu, \, \len s=\len p^\star\rangle$. Using Lemma \ref{lemma: finite diagonal intersection}, we have a sequence of large sets $\vec{A}=\langle A_\alpha \mid \alpha < \mu\rangle$ such that for every $s\in\prod A^{<\omega}_\alpha$, $A_\alpha\setminus\max (s(\alpha))\subseteq B^s_\alpha$, for every $\alpha<\mu$. Clearly, the condition $p^{**}$ with stem $\vec{\rho}\frown \langle \emptyset\rangle$ and large sets $\bar{j}(\vec{A})$ is stronger than $p^*$. Let us verify that $p^{**}$ enters the generic and thus $p^*$ also.  

Let $\alpha<\mu$.  If $\alpha$ is no one of the $\alpha_n$'s then $p^{**}(\alpha)=\langle \emptyset, \bar{j}(\vec{A})_\alpha\rangle$. Let us show that $C_\alpha\subseteq \bar{j}(\vec{A})_\alpha$ and from this we will conclude that it compatible with all the conditions of $H_\alpha$.  By definition $j(\vec{A})_\alpha\in j(\vec{U})_\alpha$. Since $\alpha$ is a fixed point of $j_{\omega\alpha,\omega\mu}$ then $j(\vec{A})_{j_{\omega\alpha,\omega\mu}(\alpha)}\in j(\vec{U})_{j_{\omega\alpha,\omega\mu}(\alpha)}$ and hence $j_{\omega\alpha}(\vec{A})_\alpha\in j_{\omega\alpha}(\vec{U})_\alpha$. Thus by the definition of the iteration and since $\crit j_{\omega\alpha,\omega\alpha+1}>\alpha$,  $\rho^0_\alpha\in j_{\omega\alpha+1}(\vec{A})_\alpha$. The critical point of $j_{\omega\alpha+1,\omega\mu}$ is $\rho^1_\alpha$ an hence $\rho^0_\alpha$ and $\alpha$ are fixed by this embedding. This shows that $\rho^0_\alpha\in j(\vec{A})_\alpha$. Using the same argument one can show that $\rho^n_\alpha\in j(\vec{A})_\alpha$, for all $n\in\omega$.

Now let us suppose that $\alpha=\alpha_n$ for some $n<n^\star$ . We claim that $C_{\alpha_n}\setminus\vec{\rho}_n\subseteq \bar{j}(\vec{A})_{\alpha_n}$. Indeed, notice that $\bar{j}(\vec{A})_{\alpha_n}\setminus \max \vec{\rho}_n\subseteq j(\vec{B})^{\vec{\rho}}_{\alpha_n}\in j(\vec{U})_{\alpha_n}$. Thus $\bar{j}(\vec{A})_{\alpha_n}\setminus \max \vec{\rho}_n\in j(\vec{U})_{\alpha_n}$. On the other hand $\max \vec{\rho}_n=\max (\bar{j}''_{\omega\alpha_n+k+1, \omega\mu} \vec{\rho}_n)$ where $k=|\vec{\rho}_n|$. Hence $\bar{j}_{\omega\alpha_n+k+1}(\vec{A})_{\alpha_n}\setminus\max \vec{\rho}_n\in j_{\omega\alpha_n+k+1}(\vec{U})_{\alpha_n}$. By definition of the iteration, $\rho^{k+1}_{\alpha_n}\in j_{\omega\alpha_n+k+2}(\vec{A})_{\alpha_n}$ and hence $\rho^{k+1}_{\alpha_n}\in \bar{j}(\vec{A})_{\alpha_n}$ since $\crit j_{\omega\alpha_n+k+2,\omega\mu}>\rho^{k+1}_{\alpha_n}>\alpha_n$. Repeating this argument, we concluce that $C_{\alpha_n}\setminus\vec{\rho}_n\subseteq \bar{j}(\vec{A})_{\alpha_n}$. From this it is obvious that $p^{**}(\alpha_n)=\langle\vec{\rho}_n, \bar{j}(\vec{A})_{\alpha_n}\rangle\parallel q$, for each $q\in H_{\alpha_n}$. This completes the proof of genericity of $H$.
\end{proof}
Set $j^\star=\bar{j}\circ j$. The proof of next claim leads us to the end of the lemma.
\begin{claim}
The embedding $j^\star: V\rightarrow M^\star$ lifts to an elementary embedding $j^\star: V[G]\rightarrow M^\star[G\times H]$ which is a witness for $\lambda$-\Cn-supercompactness of $\kappa$ in $V[G]$.
\end{claim}
\begin{proof}[Proof of claim.]
Provide that $j^\star$ lifts, it is clear that this embedding will lie in $V[G]$ since $H$ is definable within $M$.
Let us first show that $j^\star$ lifts.  Let $p\in G$ and notice that $j(p)=p\frown q$ \footnote{This stands for the concatenation (in the right interpretation) of both conditions.} where $q\in \mathbb{M}^*$ has trivial stem. To be more precise, $q=\langle\langle(s(\alpha),B_\alpha\rangle:\,\alpha\in \mu\rangle$ such that $s(\alpha)=\emptyset$ and $B_\alpha\in {U}_{\alpha}$. Applying the second elementary embedding, we have that $p$ is not moved (since $\crit(\bar{j})>\kappa$) whereas $\bar{j}(q)=\langle\langle\emptyset,\bar{j}(\vec{B})_\alpha\rangle:\,\alpha\in \mu\rangle$\footnote{Here $\vec{B}$ stands for the sequence of large sets of $q$, $\langle A_\alpha:\,\alpha<\mu\rangle$.}.
For each $\alpha< \mu$, one can argue as in the proof of genericity for $H$ that $C_\alpha\subseteq \bar{j}(\vec{B})_\alpha$ and thus $\langle \emptyset, \bar{j}(\vec{B})_\alpha\rangle\parallel q$, for all $q\in H_\alpha$. In particular, $\bar{j}(q)\in H$ and hence $j^\star(p)\in G\times H$.

To finish the claim it remains to show that $N=M^\star[G\times H]$ is closed by $\lambda$-sequences since $j^\star(\kappa)=j(\kappa)\in C^{(n)}$ because $\mathbb{M}$ is mild. Since $N$ is a model of choice, it is sufficient to show that every $\lambda$-sequence of ordinals from $V[G]$ belong to $N$. Note that the forcing $\mathbb{M}$ introduces new $\omega$-sequences. First, since $j$ is a $\lambda$-supercompact embedding, $M$ is closed under $\lambda$-sequences from $V$. Let $\sigma \in V$ be an $\mathbb{M}$-name for a $\lambda$-sequence of ordinals. By the $\kappa$-chain condition of $\mathbb{M}$, we may assume that $|\sigma| = \lambda$ and that $\sigma \subseteq \mathbb{M} \times \mathrm{ON}$. Therefore $\sigma \in M$, and in $M[G]$ we can interpret it. Let us finally show that $M[G]$ and $M^\star[G\times H]$ contain the same $\lambda$-sequences of ordinals. 

Let $\langle \xi_\alpha :\, \alpha < \lambda \rangle$ be a sequence of ordinals. In $M^\star$, for every $\alpha$ there is a function $f_\alpha$ such that $\bar{j}(f_\alpha)(\vec{\rho}^\alpha_0,\dots, \vec{\rho}^\alpha_{n^\alpha-1}) = \xi_\alpha$, where $\vec{\rho}^\alpha_i$ is a finite sequence of elements of $C_{\zeta_i}$, some $\zeta_i < \mu$.  Since the critical point of $\bar{j}$ is above $\lambda$ and the sequence of functions $\langle f_\alpha \mid \alpha < \lambda\rangle \in M[G]$ we conclude that $\bar{j}(\langle f_\alpha \mid \alpha < \lambda\rangle) = \langle \bar{j}(f_\alpha) \mid \alpha < \lambda\rangle \in M^\star[G]$. Thus, it is sufficient to show that the sequence $\vec{R} = \langle \langle \vec{\rho}^\alpha_i \mid i < n^\alpha\rangle \mid \alpha < \mu\rangle \in M^\star[G][H]$. 

Let us define by induction on $\gamma < \mu$ a sequence of functions $p_\gamma$ such that $\bar{j}(p_\gamma)(H) = \gamma$. Intuitively, $p_\gamma$ is a procedure for extracting $\gamma$, given the information of $H$. Let us assume that $p_\beta$ is defined for all $\beta < \gamma$. %If $\gamma$ is the result of the application of some ordinal polynomial of values below $\gamma$ then let $p_\gamma$ be this polynomial applied on the appropriate $p_\beta$, $\beta < \gamma$. 
%If this is not the case, then we may assume that $\gamma = \omega \cdot \gamma$. 
Since the critical point of $j_{\gamma, \mu}$ is above $\gamma$, we know that $\gamma$ is represented in $M_\gamma$ by 
\[\gamma = j_{0,\gamma}(g)(\rho_0, \dots, \rho_{n-1}),\]
for some elements of the sequences in $H$, $\rho_0, \dots, \rho_{n-1}$. Those elements are all below the $\gamma$-th member of $H$ in the increasing enumeration and in particular, do not move under $j_{\gamma, \mu}$. Let $h\colon \mu \to \mu$ be the increasing enumeration of $H$. 
Let $\beta_0, \dots, \beta_{n-1}$ be their indices, so $h(\beta_i) = \rho_i$. We conclude that: 
\[\gamma = \bar{j}(g)(h(p_{\beta_0}(H)), \dots, h(p_{\beta_{n-1}}(H)) ),\]   
so we can define $p_\gamma$. 

Finally, let us show that the sequence $\vec{R}$ is in $M^\star[G][H]$. Indeed, one can obtain $\vec{R}$ from $H$ by just knowing the indices of each $\vec{\rho}^\alpha_i$. This sequence of indices is equivalent to a sequence of ordinals below $\mu$ of length $\lambda$, $\vec{\epsilon} = \langle \epsilon_\alpha \mid \alpha < \lambda\rangle$. Letting the condition $\vec{p} = \langle p_{\epsilon_\alpha} \mid \alpha < \lambda\rangle \in M$. and applying the components of $\bar{j}(\vec{p})$ to $H$ we obtain $\vec{\epsilon}$. Finally, applying $h$ on the components of $\vec{\epsilon}$, we obtain $\vec{R}$, as wanted. 
\end{proof}
\end{proof}
This immediately yields to the proof theorem \ref{MainTheorem}.
\begin{proof}[Proof of theorem \ref{MainTheorem}]
By results of D\v{z}amonja and Shelah \cite{DzjaSh}, it is known that if one changes the cofinality of some inaccessible cardinal $\delta$ to $\omega$ but preserves its successor then $\square_{\delta,\omega}$ holds in the generic extension. Consequently, $\mathbb{M}$ adds unboundely many $\square_{\delta,\omega}$-sequences below $\kappa$ and thus there is no ($\omega_1$-)strongly compact cardinal below it. Combining this with lemma \ref{MainLemma} we are done.
\end{proof}
To conclude this section we would like to point out that the ideas used in the proof of claim \ref{ClaimGenericity} can be straightforwardly adapted to proof the following version of Mathias criteria for the Magidor product of Prikry forcings:
\begin{theo}[Mathias criteria]
Suppose that $M$ is an inner model of ZFC and \linebreak$\langle U_\alpha:\alpha<\kappa\rangle$ is a sequence of normal measures over the cardinals $\langle \kappa_\alpha:\alpha<\kappa\rangle$, respectively. A sequence $\vec{C}\in\prod_{\alpha\in\kappa} {^{\kappa_\alpha}\kappa_\alpha}$ defines a generic filter for $\mathbb{M}$ if it satisfies the following condition:
$$\forall\alpha\in\kappa\,\forall A\in U_\alpha\,|\vec{C}(\alpha)\setminus A|<\aleph_0.$$
Moreover, the generic is given by
$$G(\vec{C})=\{p\in\mathbb{M}:\,\forall \alpha\in \kappa\, (p(\alpha)=\langle s(\alpha), A_\alpha\rangle\wedge s(\alpha)\vartriangleleft \vec{C}(\alpha)\,\wedge\, \vec{C}(\alpha)\setminus\max s(\alpha)+1\subseteq  A_\alpha)\} $$
\end{theo}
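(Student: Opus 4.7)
The approach is to adapt the genericity argument from Claim \ref{ClaimGenericity}, whose central tool is the Mathias--Prikry property of $\mathbb{M}$ (Lemma \ref{StrongPrikry}). Verifying that $G(\vec{C})$ is a filter on $\mathbb{M}$ is immediate from the coordinate-wise formulation: if $p,q \in G(\vec{C})$ have stems $\langle s_p(\alpha)\rangle,\langle s_q(\alpha)\rangle$ which are initial segments of $\vec{C}(\alpha)$ and large sets $A^p_\alpha, A^q_\alpha$ containing the respective tails $\vec{C}(\alpha)\setminus(\max s(\alpha)+1)$, then the coordinate-wise condition with the longer stem and the intersection of the large sets still lies in $G(\vec{C})$ and extends both $p$ and $q$. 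Upward closure under $\leq$ is similarly transparent from the definition.

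For the genericity, fix a dense open $D \in M$ and $p \in G(\vec{C})$. Applying Lemma \ref{StrongPrikry} inside $M$ produces a direct extension $p^\star \leq^\star p$ with large sets $\langle A^\star_\alpha : \alpha < \kappa\rangle$ and a length sequence $\vec{\gamma}$ such that every $q \leq p^\star$ with $\vec{\gamma} \leq_p \len s_q$ belongs to $D$. The goal is to manufacture such a $q$ inside $G(\vec{C})$. For each $\alpha \in \supp \vec{\gamma}$, I would invoke the hypothesis $|\vec{C}(\alpha)\setminus A^\star_\alpha|<\aleph_0$ to choose an initial segment $s_q(\alpha) \vartriangleleft \vec{C}(\alpha)$ that extends $s_{p^\star}(\alpha)$, has length at least $\vec{\gamma}(\alpha)$, and satisfies $\vec{C}(\alpha)\setminus (\max s_q(\alpha)+1) \subseteq A^\star_\alpha$. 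For coordinates $\alpha \in \supp s_{p^\star}\setminus \supp \vec{\gamma}$, the same cutoff argument extends $s_{p^\star}(\alpha)$ along $\vec{C}(\alpha)$ just far enough to absorb the finitely many exceptions. Setting $q(\alpha) = \langle s_q(\alpha), A^\star_\alpha\setminus(\max s_q(\alpha)+1)\rangle$ on these coordinates and $q(\alpha) = \langle \emptyset, A^\star_\alpha\rangle$ elsewhere yields a condition $q \in G(\vec{C})$ with $q \leq p^\star$ and $\vec{\gamma}\leq_p \len s_q$, so $q \in D$, as required.

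The main subtlety lies in verifying that the finite absorption is genuinely available at every relevant coordinate: since stems of conditions in $\mathbb{M}$ have finite support and each tail $\vec{C}(\alpha)\setminus A^\star_\alpha$ is finite by hypothesis, only finitely many finite cutoffs are needed and no genuine diagonal selection across $\kappa$ is required. In particular, the proof does not invoke the Finite Diagonal Intersection Lemma \ref{lemma: finite diagonal intersection}, which played the analogous role in Claim \ref{ClaimGenericity} and is replaced here by the direct combinatorial hypothesis on $\vec{C}$. Once $G(\vec{C})$ is shown to be a $V$-generic filter over $M$ in this way, the explicit description in the statement coincides by construction with the filter generated by $\vec{C}$, settling the moreover clause.
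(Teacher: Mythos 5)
There is a genuine gap in the genericity argument, located exactly where you diverge from the proof of Claim \ref{ClaimGenericity} (the paper offers no separate proof of this theorem; it only remarks that that claim's argument adapts). After applying Lemma \ref{StrongPrikry} to $p$ you obtain $p^\star\leq^\star p$ with large sets $\langle A^\star_\alpha:\alpha<\kappa\rangle$, and you then build $q$ by extending the stem along $\vec C(\alpha)$ ``far enough to absorb the finitely many exceptions'' in $\vec C(\alpha)\setminus A^\star_\alpha$. But the points being absorbed into the stem are precisely elements of $\vec C(\alpha)$ that do \emph{not} lie in $A^\star_\alpha$, while the order on $\mathbb{M}$ (inherited coordinatewise from $\mathbb{P}_{U_\alpha}$) requires every new stem entry of $q(\alpha)$ to be drawn from the large set $A^\star_\alpha$ of the condition being extended. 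So your $q$ is in general not an extension of $p^\star$, and Lemma \ref{StrongPrikry} gives no conclusion about $q\in D$. This cannot be repaired by starting from a $p$ with a longer stem, since $A^\star_\alpha$ depends on $p$. The standard repair --- and the role that Lemma \ref{lemma: finite diagonal intersection} plays in Claim \ref{ClaimGenericity}, which you explicitly discard --- is to apply Lemma \ref{StrongPrikry} below conditions with \emph{every} candidate stem and then diagonally intersect the resulting families of large sets into a single sequence $\langle A^*_\alpha\rangle\in M$; only then does the hypothesis on $\vec C$ let you choose an initial segment of $\vec C$ past which the remaining tail genuinely lies inside the large sets of a condition already known to land in $D$. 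In Claim \ref{ClaimGenericity} this is exactly the passage from $p^\star$ to $p^{**}$, which yields $C_\alpha\subseteq\bar j(\vec A)_\alpha$ outright, with no exceptional points left to absorb.

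A second, related omission: at the $\kappa$-many coordinates $\alpha$ outside $\supp s_{p^\star}\cup\vec\gamma_{\neq 0}$ you set $q(\alpha)=\langle\emptyset,A^\star_\alpha\rangle$, and membership of $q$ in $G(\vec C)$ then requires $\vec C(\alpha)\setminus(\max\emptyset+1)\subseteq A^\star_\alpha$ with \emph{no} exceptions at all; a stem of finite support cannot absorb exceptions occurring at infinitely many coordinates. Your argument never addresses this, and it is precisely the point where the coordinatewise hypothesis $|\vec C(\alpha)\setminus A|<\aleph_0$ must be applied to a single $M$-sequence of large sets and upgraded to full containment at all but finitely many $\alpha$ (which holds automatically for the critical sequences in Claim \ref{ClaimGenericity}). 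As written, the proposal does not establish genericity.
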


\subsection{Some consequences of theorem \ref{MainTheorem}}
In this section we shall analyse some of the consequences of theorem \ref{MainTheorem}. For each $n\geq 1$ let us respectively denote by $\Gamma_n$ and by $\Gamma^*_n$ the first order formulas
\begin{eqnarray*}
\text{``$\min\mathfrak{M}<\min\mathfrak{K}_{\omega_1}=\min \mathfrak{K}=\min \mathfrak{S}=\min \mathfrak{S}^{(n)}$ ''}\\
\text{``$\min\mathfrak{M}<\min\mathfrak{K}_{\omega_1}=\min \mathfrak{K}=\min \mathfrak{S}=\min \mathfrak{S}^{(n)}<\min\mathfrak{E}$ ''.}
\end{eqnarray*}
\begin{cor}
For every $n\geq 1$, 
$$\Con(\mathrm{ZFC}+\exists \kappa\,(\text{$\kappa$ \Cn-extendible}))\rightarrow \Con(\mathrm{ZFC}+\Gamma_n).$$
In particular, for every $n\geq 3$
$$\Con(\mathrm{ZFC}+\exists \kappa\,(\text{$\kappa$ \Cn-extendible}))\rightarrow \Con(\mathrm{ZFC}+\Gamma^\star_n).$$
\end{cor}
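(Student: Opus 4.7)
The plan for the first implication is to invoke Theorem \ref{MainTheorem} directly at a $C^{(n)}$-extendible $\kappa$. Starting from $V$ in which $\kappa$ is $C^{(n)}$-extendible, I would first run Tsaprounis's preparation (the Jensen iteration for global $\mathrm{GCH}$ recalled at the opening of Section \ref{Section 3}); this preserves the $C^{(n)}$-extendibility of $\kappa$ and hence, by Bagaria's theorem, its $C^{(n)}$-supercompactness. The $\mathfrak{E}^{(n)}$-Laver function that Tsaprounis provides at $\kappa$ is automatically a $\mathfrak{S}^{(n)}$-fast function, since every $C^{(n)}$-extendible embedding is in particular a $C^{(n)}$-supercompact one. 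Feeding $\kappa$ together with this fast function into Theorem \ref{MainTheorem} produces the Magidor product $\mathbb{M}$, and $V^{\mathbb{M}}$ directly witnesses $\Gamma_n$.

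For the second implication, the extra demand $\min\mathfrak{S}^{(n)}<\min\mathfrak{E}$ forces us to keep an extendible cardinal strictly above the first $C^{(n)}$-supercompact in the final model. My plan is therefore to apply Theorem \ref{MainTheorem} not at $\kappa$ itself but at a reflected $C^{(n)}$-supercompact $\kappa^{*}<\kappa$, so that $\kappa$ survives the forcing as an extendible cardinal above the new first ($\omega_1$-)strongly compact. This is where the hypothesis $n\geq 3$ enters: since $\kappa$ is $C^{(n)}$-extendible it lies in $C^{(n+2)}$, so the $\Sigma_{n+2}$ statement \emph{``there exists a $C^{(n)}$-supercompact cardinal''} (which is true in $V$, as witnessed by $\kappa$ itself) reflects into $V_\kappa$, giving some $\kappa^{*}<\kappa$ that is $C^{(n)}$-supercompact in $V_\kappa$; the $\Pi_{n+1}$-absoluteness of $C^{(n)}$-supercompactness, available from $\kappa\in C^{(n+1)}$, then promotes $\kappa^{*}$ to a genuine $C^{(n)}$-supercompact cardinal of $V$.

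With $\kappa^{*}<\kappa$ in hand I would next equip it with a $\mathfrak{S}^{(n)}$-fast function, if necessary by adjoining a Cohen real on $\kappa^{*}$ and invoking Lemma \ref{LemmaCohenfunction}, and then apply Theorem \ref{MainTheorem} at $\kappa^{*}$. The resulting Magidor product $\mathbb{M}$ lives inside $V_{\kappa^{*}}\subseteq V_\kappa$ and therefore has cardinality strictly below $\kappa$, so a standard Levy--Solovay argument, adapted to $C^{(n)}$-extendibility along the lines of Tsaprounis's work, shows that $\kappa$ remains $C^{(n)}$-extendible, in particular extendible, in $V^{\mathbb{M}}$. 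Combining this preservation with Theorem \ref{MainTheorem} applied at $\kappa^{*}$ yields in $V^{\mathbb{M}}$ the chain of equalities $\Gamma_n$ witnessed at $\kappa^{*}$ together with $\min\mathfrak{S}^{(n)}=\kappa^{*}<\kappa\leq\min\mathfrak{E}$, which is exactly $\Gamma^{\star}_n$. The main obstacle is the reflection/preservation step: one must verify that the $\kappa^{*}$ produced by reflection is genuinely $C^{(n)}$-supercompact in $V$, that the fast function on $\kappa^{*}$ can be arranged without disturbing the $C^{(n)}$-extendibility of $\kappa$, and that small-forcing preservation of $C^{(n)}$-extendibility really goes through. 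The requirement $n\geq 3$ is precisely the bound that lines up the complexity of ``existence of a $C^{(n)}$-supercompact'' with the $C^{(n+2)}$-correctness that any $C^{(n)}$-extendible cardinal automatically enjoys.
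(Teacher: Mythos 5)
Your argument for the first implication is essentially the paper's: Tsaprounis's preparation gives a $\mathfrak{E}^{(n)}$-fast (hence $\mathfrak{S}^{(n)}$-fast) function at the $C^{(n)}$-extendible cardinal, and Theorem \ref{MainTheorem} applied there yields $\Gamma_n$. That part is fine.

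For the second implication you diverge from the paper, and two steps of your route have genuine gaps. First, the downward reflection producing $\kappa^{*}$: by the paper's own accounting (footnote to Theorem \ref{PovedaWoodin}), ``$\kappa$ is $C^{(n)}$-supercompact'' is $\Pi_{n+2}$, so ``there exists a $C^{(n)}$-supercompact cardinal'' is $\Sigma_{n+3}$, not $\Sigma_{n+2}$; since a $C^{(n)}$-extendible cardinal is only guaranteed to lie in $C^{(n+2)}$, this statement does not reflect into $V_\kappa$ by correctness alone (and the witness in $V$ is $\kappa$ itself, which is not an element of $V_\kappa$, so you genuinely need the full reflection, not just absoluteness for a witness already inside $V_\kappa$). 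The fact you want --- that every $C^{(n)}$-extendible cardinal has $C^{(n)}$-supercompact cardinals below it --- appears in the paper only as a later, unproved remark attributed to Poveda--Woodin, so it cannot be obtained here by a routine complexity count. Second, even granting $\kappa^{*}$, you need a $\mathfrak{S}^{(n)}$-fast function on it. Lemma \ref{LemmaCohenfunction} does not supply one from scratch: it only re-routes an \emph{already existing} lift of a $C^{(n)}$-supercompactness embedding to $V[\ell]$, and whether such lifts exist --- i.e., whether adding a Cohen subset of $\kappa^{*}$ preserves $C^{(n)}$-supercompactness --- is exactly the open preservation problem the paper discusses in Sections 3 and 5. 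At a cardinal that is merely $C^{(n)}$-supercompact there is no known source of such a function, which is why the paper insists on working at the $C^{(n)}$-extendible cardinal itself.

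The paper's proof avoids both problems by reflecting in the opposite direction: for $n\geq 3$ a $C^{(n)}$-extendible $\kappa$ lies in $C^{(5)}$, the $\Sigma_4$ statement ``there is an extendible cardinal above $\alpha$'' holds for every $\alpha<\kappa$ (witnessed by $\kappa$), hence holds in $V_\kappa$ for all $\alpha$, and the resulting $\Pi_5$ sentence transfers up to $V$, producing extendible cardinals \emph{above} $\kappa$. One then applies Theorem \ref{MainTheorem} at $\kappa$ exactly as in the first implication; the extendibles above $\kappa$ survive the forcing, and since extendible cardinals are limits of supercompacts while $\kappa=\min\mathfrak{S}$ in $V^{\mathbb{M}}$, one gets $\min\mathfrak{S}^{(n)}<\min\mathfrak{E}$. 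This is where $n\geq 3$ is actually used, and it is a different use from the one you describe.
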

\begin{proof}
The first claim follows automatically from theorem \ref{MainTheorem}. For the second claim it will suffice to show that the existence of a \Cn-extendible cardinal entails the existence of an extendible cardinal above. Indeed, let $\kappa$ be a \Cn-extendible and notice that for every $\alpha<\kappa$ the formula $\varphi(\alpha)$
$$\text{``$\exists\beta\,(\beta>\alpha\,\wedge\,\beta\,\text{extendible}\,)$''}$$
is true and $\Sigma_4$, hence, $V_\kappa\models \text{``$\forall\alpha\,\varphi(\alpha)$''}$. Since \Cn-extendible cardinals are $C^{(5)}$-correct (see e.g. \cite{Bag}), the formula $\text{``$\forall\alpha\,\varphi(\alpha)$''}$ is already true and thus there is a proper class of extendible cardinals in the universe.
\end{proof}
\begin{remark}
\rm{New results due to the third author and Woodin have pointed out that any \Cn-extendible cardinal is a limit of \Cn-supercompact. In particular, the second claim of the corollary is already true for any $n\geq 1$. }
\end{remark}

At the light of theorem \ref{MainTheorem} the identity crises for \Cn-supercompact cardinals turns to be a plausible scenario. One may even ask if this result may be strengthened or, more particularly, if the ultimate identity crises for \Cn-supercompact cardinals is consistent; namely, provided it exists, if the first \Cn-supercompact cardinal, for each $n\geq 1$, can be the first ($\omega_1$)-strongly compact cardinal.
On this respect, the natural large cardinal hypothesis to start with is the existence of a $C^{(<\omega)}$-extendible cardinal: namely, a cardinal $\kappa$ which is \Cn-extendible, for each $n\geq 1$. Notice however that, by Tarski's theorem of undefinability of truth, the existence of such cardinals can not be expressed by a first order formula but via a countable schema of first order formulae. Let $\mathbf{k}$ be a constant symbol and consider the language of set theory augmented with it, $\mathcal{L}=\{\in,\mathbf{k}\}$.   
\begin{defi}
We will denote by $C^{(<\omega)}-\mathrm{EXT}$ the countable schema of first order formulae asserting that for each (meta-theoretic) $n\in\omega$ the $\mathcal{L}$-formula
$\text{`` $\mathbf{k}$ is \Cn-extendible''}$
holds. If $\mathfrak{M}=\langle M,\in,x\rangle$ is a $\mathcal{L}$-structure, we agree that the interpretation of the constant symbol $\mathbf{k}$ is $x$. We will write $\mathfrak{M}\models C^{(<\omega)}-\mathrm{EXT}$ if for every (meta-theoretic) $n\in\omega$ the formula $\text{``$\mathfrak{M}\models \text{$\mathbf{k}$ is \Cn-extendible}$''}$ is true. We will also denote by $\mathrm{ZFC}^\star$ the version of all $\ZFC$ axioms where we allow a constant symbol $\mathbf{k}$ to be used in any instance of axioms of replacement and separation.
\end{defi}

\begin{defi}[\Co-extendible cardinal]
Let $\kappa$ be a cardinal and $\mathfrak{M}=\langle M,\in, \kappa\rangle$ be a $\mathcal{L}$-structure. We will say that $\kappa$ is $\mathfrak{M}$-\Co-extendible if $\mathfrak{M}\models C^{(<\omega)}-\mathrm{EXT}$. If $\mathfrak{M}=\langle V, \in, \kappa\rangle$ we will simply say that $\kappa$ is \Co-extendible.
\end{defi}
In a analogous way, we can define the schema $C^{(<\omega)}-\mathrm{SUP}$ for the intended notion of $C^{(<\omega)}$-supercompactness. Let $\mathfrak{C}^{(<\omega)}$ and $\mathfrak{S}^{(<\omega)}$ denote the class of \Co-extendible and \Co-supercompact cardinals, respectively.

By results of Bagaria \cite{Bag}, the schema $C^{(<\omega)}-\mathrm{EXT}$ implies that \emph{Vop\v{e}nka Principle} holds. Recall that given $\kappa<\lambda$ the cardinal $\kappa$ is called $\lambda$-\emph{superhuge} if there is an elementary embedding $j:V\rightarrow M$ such that $\crit(j)=\kappa$, $j(\kappa)>\lambda$ and $M^{j(\kappa)}\subseteq M$. If $\kappa$ is $\lambda$-\emph{superhuge} for each $\lambda>\kappa$, the cardinal $\kappa$ is called \emph{superhuge}. If we are given a cardinal $\theta$, we will say that \emph{$\theta$ is a target of $\kappa$} (\emph{$\kappa\rightarrow (\theta)$}) when there is some ordinal $\lambda>\kappa$ and some $\lambda$-superhuge embedding $j: V\rightarrow M$ such that $j(\kappa)=\theta$. It is known that if $\kappa$ is superhuge then the collection of all of its targets is a proper class.

In \cite{BarDipri} the authors introduced an strengthening of the classical notion of superhugness. A cardinal $\kappa$ is \emph{stationarily superhuge} if its collection of targets forms a stationary proper class\footnote{Again, this notion is not first order expressible.}. Since for every $n\in\omega$ the class \Cn is a club class it is obvious that any model with an stationarily superhuge cardinal $\kappa$ satisfies the schema $C^{(<\omega)}-\mathrm{EXT}$ as witnessed by $\kappa$. As pointed out in theorem 6b of the aforementioned paper, the consistency strenght of a stationarily superhuge cardinal is below the consistency of a $2$-huge cardinal. Therefore the consistency strength of the schema $C^{(<\omega)}-\mathrm{EXT}$ is bounded by below by \VP and by above by the existence of a $2$-huge cardinal.

Let $\kappa$ be a \Co-extendible cardinal.
By Tsaprounis' result \cite{Tsan}, for each $n\geq 1$ there is a  $\mathfrak{E}^{(n)}$-fast function $\ell_n:\kappa\rightarrow\kappa$ in $V$. Notice that $V_\kappa \prec V$ and thus one can define those functions uniformly in $V_{\kappa + 1}$, so the function $\ell = \sup \ell_n$ is a member of $V$. Arguing as in theorem \ref{MainTheorem} the ultimate identity crises theorem follows:
\begin{theo}
Let $\langle V,\in,\kappa\rangle$ be a model of (a large enough fragment of) $\mathrm{ZFC}^\star$ plus $C^{(<\omega)}-\mathrm{EXT}$. Then in the generic extension $V^\mathbb{M}$ the where the chain of relations
$$\min\mathfrak{M}<\min\mathfrak{K}_{\omega_1}=\min\mathfrak{K}=\min\mathfrak{S}=\min \mathfrak{S}^{(<\omega)}<\min\mathfrak{E} $$
holds.
\end{theo}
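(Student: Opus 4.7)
The plan is to run the argument of Theorem \ref{MainTheorem} uniformly across all metatheoretic $n$, using the fact that $C^{(<\omega)}-\mathrm{EXT}$ supplies a single fast function good for every level of the $C^{(n)}$-hierarchy simultaneously. First I would extract a \emph{master} fast function $\ell:\kappa\to\kappa$. For each metatheoretic $n$ Tsaprounis's theorem provides a $\mathfrak{E}^{(n)}$-fast function $\ell_n$. Since $\kappa$ is $C^{(1)}$-extendible we have $V_\kappa\prec V$, so each $\ell_n$ admits a uniform definition inside $V_{\kappa+1}$ and the diagonal $\ell=\sup_n\ell_n$ is a legitimate element of $V$; moreover $\ell$ remains $\mathfrak{E}^{(n)}$-fast, hence $\mathfrak{S}^{(n)}$-fast, for every $n$. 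Arrange, as in Section 3, that $\ran(\ell)$ enumerates measurables with no limit points of themselves, and let $\mathbb{M}=\mathbb{M}_{\ran(\ell),\kappa}$ be the resulting Magidor product of Prikry forcings.

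Second, I would verify that $\kappa$ is $C^{(n)}$-supercompact in $V^{\mathbb{M}}$ for each (fixed) metatheoretic $n$. This is where Lemma \ref{MainLemma} is invoked essentially verbatim: given $\lambda>\kappa$, choose a $\lambda$-$C^{(n)}$-supercompact embedding $j:V\to M$ with $j(\ell)(\kappa)>\lambda$, factor $j(\mathbb{M})=\mathbb{M}\times\mathbb{M}^{\ast}$, and construct the $M$-generic $H$ for $\mathbb{M}^{\ast}$ via the $\omega\cdot j(\kappa)$-iteration of ultrapowers with critical sequences $\langle C_\alpha\rangle$. The Mathias--Prikry Property (Lemma \ref{StrongPrikry}) together with Claim \ref{ClaimGenericity} yields genericity; the closure and correctness computations at the end of Lemma \ref{MainLemma} lift $j$ to $j^\star:V[G]\to M^\star[G\times H]$ with $j^\star(\kappa)=j(\kappa)\in C^{(n)}$. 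Since $n$ was arbitrary, the schema $C^{(<\omega)}-\mathrm{SUP}$ holds at $\kappa$ in $V^{\mathbb{M}}$, witnessing $\kappa\in\mathfrak{S}^{(<\omega)}$.

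Third, I would close the chain of (in)equalities. The lower part $\min\mathfrak{M}<\min\mathfrak{K}_{\omega_1}=\min\mathfrak{K}=\min\mathfrak{S}=\min\mathfrak{S}^{(<\omega)}$ is immediate: by the D\v{z}amonja--Shelah theorem invoked in the proof of Theorem \ref{MainTheorem}, $\mathbb{M}$ adds cofinally many $\square_{\delta,\omega}$-sequences below $\kappa$, annihilating $\omega_1$-strong compactness in the interval; simultaneously $\kappa$ is $\omega_1$-strongly compact in $V^{\mathbb{M}}$ because it is $C^{(1)}$-supercompact there, which forces all five cardinals at $\kappa$; meanwhile measurables below $\kappa$ are unaffected by the small tail of $\mathbb{M}$, as Apter-style arguments show the first measurable sits strictly below $\kappa$. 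For the upper strict inequality $\min\mathfrak{S}^{(<\omega)}<\min\mathfrak{E}$: since $\kappa$ is $C^{(n)}$-extendible for every $n$ in $V$, the reflection argument used in the corollary to Theorem \ref{MainTheorem} produces genuine extendibles above $\kappa$, and these are preserved by the small forcing $\mathbb{M}$; conversely $\kappa$ itself cannot be extendible in $V^{\mathbb{M}}$, because any extendible cardinal is a limit of supercompacts, while in $V^{\mathbb{M}}$ the cardinal $\kappa$ is already the \emph{first} strongly compact.

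The main obstacle I expect is not in any single lifting, which is handled by Lemma \ref{MainLemma}, but rather in the schematic bookkeeping: the hypothesis $C^{(<\omega)}-\mathrm{EXT}$ is a scheme, not a single axiom, so one must argue scheme-theoretically that (i) the \emph{same} function $\ell$ witnesses $\mathfrak{S}^{(n)}$-fastness uniformly in $n$, and (ii) the conclusion $\kappa\in\mathfrak{S}^{(<\omega)}$ is established one metatheoretic $n$ at a time without invoking an internal quantifier over $n$. The use of $V_\kappa\prec V$ is what lets the $\ell_n$ be assembled into an element of $V$, and the proof of Lemma \ref{MainLemma} is already uniform in $n$ once the fast function is fixed, so both points go through.
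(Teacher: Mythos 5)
Your proposal is correct and takes essentially the same route as the paper, which itself only sketches this step: assemble the master function $\ell=\sup_n\ell_n$ from Tsaprounis's $\mathfrak{E}^{(n)}$-fast functions using $V_\kappa\prec V$, then repeat the argument of Theorem \ref{MainTheorem} level by level, exactly as you describe. The one small correction is that $V_\kappa\prec V$ (as a schema) is what the full hypothesis $C^{(<\omega)}-\mathrm{EXT}$ provides; $C^{(1)}$-extendibility alone only gives $V_\kappa\prec_3 V$, which would not suffice to define every $\ell_n$ uniformly.
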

This immediately yields to the following corollary:
\begin{cor}
\[\Con(\mathrm{ZFC}+\exists\,\kappa\,(\kappa\text{ is }2\text{-huge}))\rightarrow \Con(\mathrm{ZFC}+\Xi)\]
where $\Xi$ is the scheme 
\begin{center}
``$\min\mathfrak{M}<\min\mathfrak{K}_{\omega_1}=\min\mathfrak{K}=\min\mathfrak{S}=\min\mathfrak{S}^{(<\omega)}<\min\mathfrak{E}$''.
\end{center}
\end{cor}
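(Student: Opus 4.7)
The plan is to chain three consistency implications, reducing the corollary to the ultimate identity crises theorem just proved. First, I would invoke the fact cited from \cite{BarDipri} (Theorem 6b) that the consistency of a 2-huge cardinal already yields the consistency of a stationarily superhuge cardinal; this is taken as a black box from the literature and is what motivated recording the hypothesis in 2-huge language in the first place.

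Second, I would verify that in any model in which $\kappa$ is stationarily superhuge, $\kappa$ witnesses the scheme $C^{(<\omega)}-\mathrm{EXT}$. Fix a metatheoretic $n\in\omega$. The class $C^{(n)}$ is a proper club of ordinals, while the class of targets of $\kappa$ is by hypothesis a stationary proper class; hence they meet, giving some $\theta\in C^{(n)}$ that is a target of $\kappa$. Pick a corresponding witness $j\colon V\to M$ with $\crit(j)=\kappa$, $j(\kappa)=\theta$ and $M^\theta\subseteq M$. From $M^\theta\subseteq M$ we get $V_\theta\subseteq M$ and hence $V_\theta=V_\theta^M$, so for each $\lambda<\theta$ the restriction $j\upharpoonright V_\lambda\colon V_\lambda\to V_{j(\lambda)}$ is a genuine extendibility embedding with target $j(\kappa)=\theta\in C^{(n)}$. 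Since $n$ was arbitrary and uniformly definable, the $\mathcal{L}$-structure $\langle V,\in,\kappa\rangle$ satisfies $\mathrm{ZFC}^\star + C^{(<\omega)}-\mathrm{EXT}$.

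Third, I would apply the ultimate identity crises theorem (the statement immediately preceding this corollary) to the structure $\langle V,\in,\kappa\rangle$ produced by the previous step. The theorem provides a generic extension $V^\mathbb{M}$ in which the entire chain $\Xi$ holds. Concatenating the three implications yields
\[\Con(\mathrm{ZFC}+\exists\kappa\,(\kappa\ \text{is}\ 2\text{-huge}))\ \longrightarrow\ \Con(\mathrm{ZFC}+\Xi),\]
which is the statement of the corollary.

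The only subtlety worth flagging is that $\Xi$ is a countable metatheoretic scheme rather than a single first-order formula; but the theorem yields a single model in which every instance of the scheme holds simultaneously, so compactness (applied to the finite subtheories of $\mathrm{ZFC}+\Xi$) delivers the desired consistency statement without further work. The genuine mathematical content — namely, constructing the Magidor product $\mathbb{M}$ that collapses $\min\mathfrak{S}^{(<\omega)}$ down to $\min\mathfrak{S}=\min\mathfrak{K}=\min\mathfrak{K}_{\omega_1}$ while keeping $\min\mathfrak{M}$ below and $\min\mathfrak{E}$ strictly above — has already been absorbed into the proof of the theorem; the present corollary is essentially bookkeeping that matches the large-cardinal hypothesis stated in 2-huge language with the schema $C^{(<\omega)}-\mathrm{EXT}$ that the theorem literally requires as input.
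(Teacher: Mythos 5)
Your proposal matches the paper's (implicit) argument exactly: the paper likewise obtains the corollary by chaining Theorem 6b of \cite{BarDipri} (a $2$-huge cardinal bounds the consistency of a stationarily superhuge cardinal), the observation that a stationarily superhuge $\kappa$ witnesses $C^{(<\omega)}-\mathrm{EXT}$ because its stationary proper class of targets meets every club class $C^{(n)}$, and then the ultimate identity crises theorem applied to $\langle V,\in,\kappa\rangle$. The only caveat is that your justification of the middle step is a bit too quick --- $M^\theta\subseteq M$ gives $V_\theta=V_\theta^M$ but says nothing about $V_{j(\lambda)}$ versus $V_{j(\lambda)}^M$ for $\lambda>\kappa$ (since $j(\lambda)>j(\kappa)=\theta$), so identifying the codomain of $j\upharpoonright V_\lambda$ with a genuine rank-initial segment requires the standard ``superhuge implies extendible'' argument rather than the one-line reason you give --- but the paper itself treats this step as known and obvious, so this does not change the verdict.
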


\section{A summary of what is known}
In the present section we shall briefly summarize all the known consistency relations between the classes of supercompact, \Cn-supercompact and \Cn-extendible cardinals. Similarly to the classical Magidor's-like analysis of supercompact cardinals in this setting there are also two critical scenarios: the first one corresponding to the identity crises phenomenon  discussed in previous sections and the second one where the expected hierarchic relations between large cardinals hold.

As pointed out earlier, the case of \Cn-extendible cardinals is paradigmatic in the sense that they are not affected by the identity crises pathology. In other words, the class of \Cn-extendibles is ordered hierarchically  and thus its configuration fits within the second paradigm of the universe described so far. Nonetheless the situation with respect to \Cn-supercompact cardinals may be completely different by virtue of theorems \ref{MainTheorem} and  \ref{MainTheorem2}. Specifically, we have shown that an extreme identity crises for these classes of cardinals is possible by making the first \Co-supercompact cardinal the first ($\omega_1$-)strongly compact cardinal.

Recent investigations of the third author with Woodin have brought to light that the antagonistic scenario is also possible under the assumption of a new axiom called EEA \cite{PW}.

\begin{axiom}[Extender Embedding Axiom (EEA)]
Let $j:V \to M$ be an elementary embedding with critical point $\kappa$ such that $j(\kappa)$ is a limit cardinal and such that $M$ is closed under $\omega$-sequences. Then $\kappa$ is $j(\kappa)$-superstrong, i.e $V_{j(\kappa)}\subseteq M$.
\end{axiom}
The point for EEA is that under this axiom the configuration of the different classes $\mathfrak{S}^{(n)}$ coincide with the standard ordering pattern of the large cardinal hierarchy: 
\begin{theo}[P.-Woodin \cite{PW}]\label{PovedaWoodin}
Assume EEA. Then the following clauses hold:
\begin{enumerate}
\item For each $n\geq 1$, the class of \Cn-supercompact cardinal is included in $C^{(n+2)}$.\footnote{This is optimal as being \Cn-supercompact is a $\Pi_{n+2}$ property.}
\item For each $n\geq 1$, $\text{``$\min \mathfrak{S}<\min\mathfrak{S}^{(n)}<\min\mathfrak{E}^{(n)}<\min\mathfrak{S}^{(n+1)}$''}$ holds.
\end{enumerate}
\end{theo}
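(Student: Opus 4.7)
The plan is to obtain (1) by leveraging EEA to upgrade every $C^{(n)}$-supercompact embedding to a superstrong one, and then extract $C^{(n+2)}$-correctness from the combination of superstrongness and $j(\kappa)\in C^{(n)}$. Clause (2) will then fall out by reflection: each of the three strict inequalities is obtained by taking a minimal cardinal from the larger class, invoking its correctness (either through (1) or through Bagaria's companion result for $C^{(n)}$-extendibles), and reflecting the existence of a member of the smaller class below it.

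Concretely, for (1), let $\kappa$ be $C^{(n)}$-supercompact and consider any $\lambda$-$C^{(n)}$-supercompact embedding $j\colon V\to M$ with $\lambda\geq\omega$. Since $n\geq 1$, $j(\kappa)\in C^{(n)}$ forces $j(\kappa)$ to be a limit cardinal, and $M^{\lambda}\subseteq M$ triggers EEA; hence $V_{j(\kappa)}\subseteq M$ and $V_{j(\kappa)}\prec_{n} V$. Given a $\Sigma_{n+2}$-formula $\varphi(\vec{a}) \equiv \exists x\,\psi(x,\vec{a})$ with $\psi\in\Pi_{n+1}$ and $\vec{a}\in V_{\kappa}$, suppose $V\models\varphi(\vec{a})$; I would fix a witness $b\in V$ and a reflection point $\gamma\in C^{(n+1)}$ with $\mathrm{rank}(b)<\gamma$, so that $V_{\gamma}\models\varphi(\vec{a})$ by $\Pi_{n+1}$-absoluteness. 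Choosing $j$ with $j(\kappa)>\gamma$, the set $V_{\gamma}$ lives in $M$ and coincides with $V_{\gamma}^{M}$. Thus $M$ witnesses the $\Sigma_{1}$-sentence $\exists \gamma'<j(\kappa)\,(V_{\gamma'}\models\varphi(\vec{a}))$, which pulls back through $j$ to $\exists \gamma_{0}<\kappa\,(V_{\gamma_{0}}\models\varphi(\vec{a}))$.

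The delicate step, which I expect to be the main obstacle, is promoting $V_{\gamma_{0}}$-reflection to $V_{\kappa}$-reflection. A witness $x_{0}\in V_{\gamma_{0}}$ for $\psi$ inside $V_{\gamma_{0}}$ only translates to a witness inside $V_{\kappa}$ if enough $\Pi_{n+1}$-absoluteness holds between $V_{\gamma_{0}}$ and $V_{\kappa}$. The cleanest route is to pull back a strengthened statement, forcing $M$ to witness $\exists \gamma'<j(\kappa)\,(V_{\gamma'}\prec_{n+1} V_{j(\kappa)}\wedge V_{\gamma'}\models\varphi(\vec{a}))$; elementarity of $j$ would then yield $V_{\gamma_{0}}\prec_{n+1} V_{\kappa}$ and close the argument. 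The hard part is verifying the $M$-side of this strengthened sentence, since one has only $V_{\gamma}\prec_{n+1} V$ and $V_{j(\kappa)}\prec_{n} V$, while one wants $V_{\gamma}\prec_{n+1} V_{j(\kappa)}$. I would handle this by induction on $n$: the inductive hypothesis supplies $\kappa\in C^{(n+1)}$, and combining this correctness with the superstrongness granted by EEA and $j(\kappa)\in C^{(n)}$ is enough to bridge the gap one level at a time.

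For (2), once (1) is secured, the three inequalities follow by reflection. For $\min\mathfrak{S}<\min\mathfrak{S}^{(n)}$: the least $C^{(n)}$-supercompact $\kappa_{0}$ lies in $C^{(n+2)}$ by (1), and ``there exists a supercompact'' is $\Sigma_{3}\subseteq\Sigma_{n+2}$, so $V_{\kappa_{0}}$ sees a supercompact, necessarily below $\kappa_{0}$ and, by minimality, not $C^{(n)}$-supercompact. For $\min\mathfrak{S}^{(n)}<\min\mathfrak{E}^{(n)}$: using the standard inclusion $\mathfrak{E}^{(n)}\subseteq\mathfrak{S}^{(n)}$, the task reduces to showing that the least $C^{(n)}$-supercompact is not $C^{(n)}$-extendible; a classical Magidor-style argument, reflecting ``there exists a $C^{(n)}$-supercompact'' through a $C^{(n)}$-extendible embedding $j\colon V_{\lambda}\to V_{\mu}$ with $\mu\in C^{(n)}$ and $\lambda$ sufficiently large, accomplishes this. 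For $\min\mathfrak{E}^{(n)}<\min\mathfrak{S}^{(n+1)}$: the least $C^{(n+1)}$-supercompact lies in $C^{(n+3)}$ by (1), and ``there exists a $C^{(n)}$-extendible'' is $\Sigma_{n+3}$, so once more the correctness just suffices for $V_{\kappa}$-reflection to produce a strictly smaller $C^{(n)}$-extendible.
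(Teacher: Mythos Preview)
The paper does not prove this theorem: it is stated in Section~4 purely as a citation from the external reference~[PW], with no argument supplied beyond the parenthetical remark that the middle inequality of clause~(2) does not require EEA (and the earlier Remark that every $C^{(n)}$-extendible is a limit of $C^{(n)}$-supercompacts). There is therefore no proof in the paper against which to compare your proposal.

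That said, your outline for (1) is essentially sound, though the difficulty is slightly mislocated. The claim $V_\gamma\prec_{n+1}V_{j(\kappa)}$ that you flag as the hard part actually follows directly from $\gamma\in C^{(n+1)}$, $j(\kappa)\in C^{(n)}$ and $\gamma<j(\kappa)$: for a $\Sigma_{n+1}$ formula, route any witness through $V$ using $\Pi_n$-absoluteness at $j(\kappa)$ and $\Sigma_{n+1}$-absoluteness at $\gamma$. Where induction on $n$ is genuinely needed is the \emph{upward} half of $\Sigma_{n+2}$-correctness for $\kappa$, since pushing a $\Pi_{n+1}$ subformula from $V_\kappa$ up to $V$ requires $\kappa\in C^{(n+1)}$; this is exactly what the inductive hypothesis (or, at the base $n=1$, the standard fact that supercompact cardinals lie in $C^{(2)}$) provides. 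For (2), your reflection scheme is reasonable and matches the paper's hint for the middle inequality; you should be aware, however, that the third inequality also presupposes $\mathfrak{E}^{(n)}\neq\emptyset$, so a complete argument must extract a $C^{(n)}$-extendible from EEA together with the $C^{(n+1)}$-supercompactness of $\kappa$ before the $C^{(n+3)}$-reflection step can fire.
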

It is worth to emphasize that the inequality $\text{``$\min\mathfrak{S}^{(n)}<\min\mathfrak{E}^{(n)}$''}$ is proved without need of EEA, though. Altogether, it seems that EEA is the right axiom one has to consider to force the universe to have the \textit{expected} configuration in the section of the large cardinal hierarchy ranging between the first supercompact cardinal and \VP. Therefore it turns out that a central issue for the study of such cardinals is to clarify the status of EEA modulo large cardinals: namely if it is already consistent. On this respect the present paper has implicitly made some steps towards solving this issue. More precisely, at the light of theorem \ref{PovedaWoodin}, EEA can not coexists with the identity crises phenomenon and thus it must fails in the Magidor's model discussed in the previous section.
Nowadays the study of the consistency of EEA forms part of an ongoing project between the third autor and Woodin and it seems it has deep connections with the inner model program at finite  levels of supercompactness.

\section{Open Questions and concluding remarks}\label{OpenQuestions}
We would like to conclude the present paper exposing certain questions of combinatorial flavour that remain open. Broadly speaking we are interested to answer, with the most possible generality, the following question:

\begin{quest}
\rm{What can be said about the combinatorics of $V$ under the existence of \Cn-supercompact cardinals? }
\end{quest}
 
Unlike supercompact cardinals it does not seem evident how to develop a theory that stu\-dies the consequences of \Cn-supercompact cardinals on the combinatorics of $V$.
In the context of supercompact cardinal this project has been carried out successfully, mainly by means of the method of forcing,  yielding to a rich and vast theory. There are many paradigmatic examples on this respect but one of the most important is the Laver's theorem of indestructibility of supercompact cardinals by $\kappa$-directed closed forcing \cite{Lav}. Speaking in general, Laver's result shows that supercompactness is a robust notion with respect to a wide family of (set) forcings where one can find $\Add(\kappa,\lambda)$ among many others. In particular, Laver's theorem shows that supercompactness is consistent with any prescribed behaviour of the power set function on $\kappa$.\footnote{There are also similar results with partial square principles as pointed out in previous sections.} The moral here is that one can get relevant information about the combinatorics of $V$ from the robustness of supercompactness.

Nevertheless,  this does not seem to be the case for the class of \Cn-supercompact cardinals. For instance, as commented in former sections, it is not evident whether these cardinals carry $\mathfrak{S}^{(n)}$-fast functions and thus one can not naively adapt Laver's indestructibility arguments to this new setting. In fact theorem \ref{PovedaWoodin} indicates that under EEA any \Cn-supercompact cardinal is a $C^{(n+2)}$-cardinal hence no indestructibility result is available for such cardinals \cite{BagHam}. This suggest the following question:

%Making use of such robustness one can prove for instance  that above a supercompact cardinal $\kappa$ the power set function on the regular cardinals may exhibit any prescribed behaviour or that $\square_{\theta,\cf\theta}$ may hold for  any $\theta$ of big enough cofinality despite $\square_\theta$ always fails. This kind of results have a lot of interest from a platonistic point of view as they give a taste about the real combinatorial nature of the universe.

%A preliminary approach to such a theory seems to pass through a discussion on which forcings preserve \Cn-supercompact cardinals. There are mainly two standard ways to carry out this issue:  either we analyse under which conditions the embeddings may be lifted or the requirements to create a suitable extender in the corresponding generic extension. Nevertheless both strategies seem to be highly non trivial to implement as we shall try to argue in the next lines.

\begin{quest}
\rm{
Let $\kappa$ be a  \Cn-supercompact cardinal.
What kind of forcings preserve the \Cn-supercompactnes of $\kappa$? For instance, is it possible to add many Cohen susbsets to $\kappa$ while preserving its \Cn-supercompactness?}
\end{quest}
In the next lines we will give an outline of the main difficulties one faces up with discussing the interplay of forcing with \Cn-supercompact cardinals.
Speaking in general, for any given forcing there are two standard ways to proceed on this respect: either analysing under which hypothesis the corresponding embeddings may be lifted or how can one define extenders witnessing the \Cn-supercompactness of $\kappa$ in the generic extension. In the next lines we shall try to argue that any of both strategies seem non trivial to implement.

Let $\mathbb{P}$ be a forcing notion, $G\subseteq \mathbb{P}$ a generic filter, $\lambda>\kappa$ be an arbitrary cardinal and $j:V\rightarrow M$ be an elementary embedding witnessing the $\lambda$-$C^{(n)}$-supercompactness of $\kappa$. The strategies previously commented may be phrased in the following terms:
\begin{itemize}
\item[$\diamondsuit$] \textbf{Lifting strategy:} Lift $j$ to $j^*$ witnessing the $\lambda$-$C^{(n)}$-supercompactness of $\kappa$ in $V[G]$.
\item[$\diamondsuit$]\textbf{Extender strategy:} Use $j$ to define in $V[G]$ an extender $E$ such that $j_E: V[G]\rightarrow M$ witnesses the $\lambda$-$C^{(n)}$-supercompactness of $\kappa$ (see section 5 of \cite{Bag} for details) .
\end{itemize}
Notice that regardless of the strategy the cardinal $j(\kappa)$ remains in the class $(C^{(n)})^{V[G]}$ since the forcing $\mathbb{P}$ is mild.
\subsubsection{\textbf{Lifting strategy}}
If $\mathbb{P}$ is a $\kappa$-Easton support iteration of forcings within $V_\kappa$ it is not hard to show that $j$ lifts to $j^*:V[G]\rightarrow M[G\ast H]$, where $H\subseteq j(\mathbb{P})/\mathbb{P}$ is generic over $M[G]$. Furthermore, with a bit of care, one may make sure that $M[G\ast H]^\lambda\subseteq M[G\ast H]$.\footnote{For instance guiding $\mathbb{P}$ with some fast function as we did in the proof of Proposition \ref{PreservationOfSuper}}

 Thereby the main issue here is how to ensure that $j^*$ is definable in $V[G]$ or, in other words, that the $M[G]$-generic filter $H$ lies in $V[G]$. There are specific situations where one can argue on this direction; for instance, using a diagonalization argument as in Proposition 8.1 of \cite{CumIter} or appealing to the distributiviness of the tail forcing $j(\mathbb{P})/\mathbb{P}$ as in Lemma 3.5 in \cite{TsaPhD}. Nonetheless both arguments rely in the fact that whilst $j(\kappa)$ is very large in $M$ it is small in $V$. It is clear that this is never the case for \Cn-supercompact cardinals.

Consequently the \textbf{Lifting strategy} yields to the issue of \textbf{building definable generics for} $j(\mathbb{P})/\mathbb{P}$ which suggests that one has to be able to \textit{handmade} generics for $j(\mathbb{P})/\mathbb{P}$ . Notice that this is precisely the procedure we have followed in the proof of theorem \ref{MainTheorem}.
\subsubsection{\textbf{Extender strategy}}
This strategy is used for instance in Proposition \ref{PreservationOfSuper} or Lemma 6.4 of \cite{GitPrikry}.  Assume $\mathbb{P}$ is a forcing $\kappa$-iteration of forcings within $V_\kappa$  with a close enough tail forcing $j(\mathbb{P})/\mathbb{P}$. Lift $j$ to $j^\star: V[G]\rightarrow M[G\ast H]$ as before and afterwards define $E$ to be the \textit{potential extender} derived from $j^*$. More precisely, set $E=\langle E_a:\,a\in [\eta]^{<\omega} \rangle$ as
$$(\star)\;\;\;X\in E_a\;\longleftrightarrow\;\exists p\in G\,\exists q\leq j (p)\setminus\kappa, \; p\frown q\Vdash_{j(\mathbb{P})} \dot{a}\in j(\dot{X})$$
where $\dot{a},\dot{X}$ are $\mathbb{P}$-names and $\eta$ is some ordinal. Here the closedness of the tail is used to argue that $E\in V[G]$. 

As it is shown in \cite{GitPrikry} if $\mathbb{P}$ is a suitable Prikry-type iteration and the order relation appearing in $(\star)$ is $\leq^\star$ then $E_a$ is a $\kappa$-complete normal measure, each $a\in[\eta]^{<\omega}$. The main issue here thus is not related with the definability of the extender nor with its combinatorial properties but with $j_E(\kappa)$. Notice that we have to make sure that $j_E(\kappa)$ is a $C^{(n)}$-cardinal in $V[G]$ and thus it is natural to ask whether $j_E(\kappa)=j(\kappa)$. Nonetheless this technical point seems very hard to fulfil due to the \textit{generic definition} of $E$. In summary, the \textbf{Extender strategy} yields to the the issue of \textbf{finding extenders $E$ such that $j_E(\kappa)=j(\kappa)$.}

\section*{Acknowledgments}
\rm{The present paper has been prepared during a research stay of the third author in the Einstein Institute of Mathematics at the Hebrew University of Jerusalem. The third author wants to express his gratitude to professor S. Shelah for his kindness for inviting him and to professors J. Bagaria, W. H. Woodin and M. Gitik for many illuminating discussions on the matter. In the same way, the third author extends his gratitude to the Einstein Institute of Mathematics for his warmly hospitality during his stay.}
\bibliographystyle{alpha} 
\bibliography{biblio}
\end{document}